\numberwithin{equation}{section}
\def\XXint#1#2#3{{\setbox0=\hbox{$#1{#2#3}{\int}$ }
		\vcenter{\hbox{$#2#3$ }}\kern-.6\wd0}}
\newlength{\leftstackrelawd}
\newlength{\leftstackrelbwd}
\def\leftstackrel#1#2{\settowidth{\leftstackrelawd}%
	{${{}^{#1}}$}\settowidth{\leftstackrelbwd}{$#2$}%
	\addtolength{\leftstackrelawd}{-\leftstackrelbwd}%
	\leavevmode\ifthenelse{\lengthtest{\leftstackrelawd>0pt}}%
	{\kern-.5\leftstackrelawd}{}\mathrel{\mathop{#2}\limits^{#1}}}
\theoremstyle{plain}
\newtheorem{thm}{Theorem}[section]
\newtheorem{lem}[thm]{Lemma}
\newtheorem{cor}[thm]{Corollary}
\newtheorem{prop}[thm]{Proposition}
\newtheorem*{thm*}{Theorem}
\theoremstyle{definition}
\newtheorem{defn}[thm]{Definition}
\newtheorem{rmk}[thm]{Remark}
\newtheorem{?}[thm]{Problem}
\newenvironment{customthm}[1]
{\innercustomthm}
{\endinnercustomthm}
\newcommand{\KN}{\mathbin{\bigcirc\mspace{-15mu}\wedge\mspace{3mu}}}
\newcommand{\ep}{\varepsilon}
\renewcommand{\phi}{\varphi}
\renewcommand{\epsilon}{\varepsilon}
\def\@cite#1#2{[\textbf{#1\if@tempswa , #2\fi}]}
\def\@biblabel#1{[\textbf{#1}]}
\newcommand*{\defeq}{\mathrel{\rlap{%
			\raisebox{0.3ex}{$\m@th\cdot$}}%
		\raisebox{-0.3ex}{$\m@th\cdot$}}%
	=}
\newcommand*{\eqdef}{=\mathrel{\rlap{%
			\raisebox{0.3ex}{$\m@th\cdot$}}%
		\raisebox{-0.3ex}{$\m@th\cdot$}}%
	}
\newcounter{marnote}
\def\underbracex#1#2{\mathop{\vtop{\m@th\ialign{##\crcr
				$\hfil\displaystyle{#2}\hfil$\crcr
				\noalign{\kern3\p@\nointerlineskip}%
				#1\crcr\noalign{\kern3\p@}}}}\limits}
\def\upbracefilla{$\m@th \setbox\z@\hbox{$\braceld$}%
	\bracelu\leaders\vrule \@height\ht\z@ \@depth\z@\hfill 
	\kern\p@\vrule \@width\p@\kern\p@\vrule \@width\p@\kern\p@\vrule \@width\p@
	$}
\def\upbracefillb{$\m@th \setbox\z@\hbox{$\braceld$}%
	\vrule \@width\p@\kern\p@\vrule \@width\p@\kern\p@\vrule \@width\p@\kern\p@
	\leaders\vrule \@height\ht\z@ \@depth\z@\hfill\bracerd
	\braceld\leaders\vrule \@height\ht\z@ \@depth\z@\hfill
	\kern\p@\vrule \@width\p@\kern\p@\vrule \@width\p@\kern\p@\vrule \@width\p@
	$}
\def\upbracefillc{$\m@th \setbox\z@\hbox{$\braceld$}%
	\vrule \@width\p@\kern\p@\vrule \@width\p@\kern\p@\vrule \@width\p@\kern\p@
	\leaders\vrule \@height\ht\z@ \@depth\z@\hfill
	\kern\p@\vrule \@width\p@\kern\p@\vrule \@width\p@\kern\p@\vrule \@width\p@
	$}
\def\upbracefilld{$\m@th \setbox\z@\hbox{$\braceld$}%
	\vrule \@width\p@\kern\p@\vrule \@width\p@\kern\p@\vrule \@width\p@\kern\p@
	\leaders\vrule \@height\ht\z@ \@depth\z@\hfill\braceru$}
\def\upbracefillbd{$\m@th \setbox\z@\hbox{$\braceld$}%
	\vrule \@width\p@\kern\p@\vrule \@width\p@\kern\p@\vrule \@width\p@\kern\p@
	\bracerd\braceld
	\leaders\vrule \@height\ht\z@ \@depth\z@\hfill\braceru$}
\date{}
\begin{document}

\title{The $\sigma_k$-Loewner-Nirenberg problem on Riemannian manifolds for $k=\frac{n}{2}$ and beyond}
\author{Jonah A. J. Duncan\footnote{Department of Mathematics, University College London, 25 Gordon Street, London, WC1H 0AY, UK. Email: jonah.duncan@ucl.ac.uk. Supported by the Additional Funding Programme for Mathematical Sciences, delivered by EPSRC (EP/V521917/1) and the Heilbronn Institute for Mathematical Research.} ~and Luc Nguyen\footnote{Mathematical Institute and St Edmund Hall, University of Oxford, Andrew Wiles Building, Radcliffe Observatory Quarter, Woodstock Road, OX2 6GG, UK. Email: luc.nguyen@maths.ox.ac.uk}}
\maketitle

\begin{abstract}
	Let $(M^n,g_0)$ be a smooth compact Riemannian manifold of dimension $n\geq 3$ with smooth non-empty boundary $\partial M$. Let $\Gamma\subset\mathbb{R}^n$ be a symmetric convex cone and $f$ a symmetric defining function for $\Gamma$ satisfying standard assumptions. Denoting by $A_{g_u}$ the Schouten tensor of a conformal metric $g_u = u^{-2}g_0$, we show that the associated fully nonlinear Loewner-Nirenberg problem 
	\begin{align*}
	\begin{cases}
	f(\lambda(-g_u^{-1}A_{g_u})) = \frac{1}{2}, \quad \lambda(-g_u^{-1}A_{g_u})\in\Gamma & \text{on }M\backslash \partial M \\
	u = 0 & \text{on }\partial M
	\end{cases}
	\end{align*}
	admits a solution if $\mu_\Gamma^+ > 1-\delta$, where $\mu_\Gamma^+$ is defined by $(-\mu_\Gamma^+,1,\dots,1)\in\partial\Gamma$ and $\delta>0$ is a constant depending on certain geometric data. In particular, we solve the $\sigma_k$-Loewner-Nirenberg problem for all $k\leq  \frac{n}{2}$, which extends recent work of the authors to include the important threshold case $k=\frac{n}{2}$. In the process, we establish that the fully nonlinear Loewner-Nirenberg problem and corresponding Dirichlet boundary value problem with positive boundary data admit solutions if there exists a conformal metric $g\in[g_0]$ such that $\lambda(-g^{-1}A_g)\in\Gamma$ on $M$; these latter results require no assumption on $\mu_\Gamma^+$ and are new when $(1,0,\dots,0)\in\partial\Gamma$.  \vspace*{3mm}
\end{abstract}

\setcounter{tocdepth}{2}
\tableofcontents

\section{Introduction}

In their classical work \cite{LN74}, Loewner \& Nirenberg established (among other results) that a smooth bounded domain in $\mathbb{R}^n$ ($n\geq 3$) admits a smooth complete conformally flat metric with constant negative scalar curvature. Aviles \& McOwen \cite{AM88} later extended this result to compact Riemannian manifolds with boundary. Since \cite{AM88, LN74} there have been a number of related works addressing questions such as asymptotics of solutions, existence in non-smooth domains and applications to mathematical physics; for a partial list of references see Allen et.~al.~\cite{AILA18}, Andersson et.~al.~\cite{ACF92}, Aviles \cite{Av82}, Finn \cite{Finn98}, Gover \& Waldron \cite{GW17}, Graham \cite{Gr17}, Gursky \& Graham \cite{GG21}, Han et.~al.~\cite{HJS20}, Han \& Shen \cite{HS20}, Hogg \& Nguyen \cite{HN23}, Jiang \cite{Jia21}, Li \cite{Li22}, Mazzeo \cite{Maz91} and V\'eron \cite{Ver81}. \medskip 

A natural question is whether these results can be extended to other notions of negative curvature.  Since the pioneering works of Caffarelli, Nirenberg \& Spruck \cite{CNS3}, Viaclovsky \cite{Via00a} and Chang, Gursky \& Yang \cite{CGY02a}, a central problem has been to prescribe certain symmetric functions of the eigenvalues of the Schouten tensor within a conformal class. Recall that the Schouten tensor of a Riemannian metric $g$ is defined by
\begin{align*}
A_g = \frac{1}{n-2}\bigg(\operatorname{Ric}_g - \frac{R_g}{2(n-1)}g\bigg),
\end{align*} 
where $\operatorname{Ric}_g$ and $R_g$ denote the Ricci curvature and scalar curvature of $g$, respectively. More precisely, suppose that $f$ and $\Gamma$ satisfy the following properties: 
\begin{align}
& \Gamma\subset\mathbb{R}^n\text{ is an open, convex, connected symmetric cone with vertex at 0}, \label{21'} \\
& \Gamma_n^+ = \{\lambda\in\mathbb{R}^n: \lambda_i > 0 ~\forall ~1\leq i \leq n\} \subseteq \Gamma \subseteq \Gamma_1^+ =  \{\lambda\in\mathbb{R}^n : \lambda_1+\dots+\lambda_n > 0\}, \label{22'} \\
& f\in C^\infty(\Gamma)\cap C^0(\overline{\Gamma}) \text{ is concave, homogeneous of degree one and symmetric in the }\lambda_i, \label{23'}  \\
& f>0 \text{ in }\Gamma, \quad f = 0 \text{ on }\partial\Gamma, \quad f_{\lambda_i} >0 \text{ in } \Gamma \text{ for }1 \leq i \leq n. \label{24'}
\end{align}
When convenient we may also assume without loss of generality that $f$ is normalised so that 
\begin{align}\label{25'}
f(e) = 1, \quad e = (1,\dots,1).
\end{align}
One is then led to the following generalisation of the Loewner-Nirenberg problem:\medskip 

\noindent\textbf{The fully nonlinear Loewner-Nirenberg problem:} \textit{Given a smooth compact Riemannian manifold $(M,g_0)$ of dimension $n\geq 3$ with smooth non-empty boundary, and given $(f,\Gamma)$ satisfying \eqref{21'}--\eqref{25'}, does there exist a conformal metric $g_u = u^{-2}g_0$ which is complete in the interior of $M$ and satisfies}
\begin{align}\label{105}
f(\lambda(-g_u^{-1}A_{g_u})) = \frac{1}{2}, \quad \lambda(-g_u^{-1}A_{g_u})\in\Gamma \quad \text{on }M 
\end{align}
\textit{and}
\begin{align}\label{300}
\lim_{\mathrm{d}_{g_0}(x,\partial M)\rightarrow 0}\, \frac{u(x)}{\mathrm{d}_{g_0}(x,\partial M)}\in(0,\infty).
\end{align}

Typical examples of $(f,\Gamma)$ satisfying \eqref{21'}--\eqref{25'} are given by $(c\sigma_k^{1/k}, \Gamma_k^+)$ for $1\leq k \leq n$, where $c = c_{n,k} =\binom{n}{k}^{-1/k}$, $\sigma_k:\mathbb{R}^n\rightarrow\mathbb{R}$ is the $k$'th elementary symmetric polynomial and $\Gamma_k^+$ is the G{\aa}rding cone:
\begin{align*}
\sigma_k(\lambda_1,\dots,\lambda_n) = \sum_{1\leq i_1 < \dots< i_k \leq n} \lambda_{i_1}\dots\lambda_{i_k} \quad \text{and} \quad \Gamma_k^+ = \{\lambda \in\mathbb{R}^n : \sigma_j(\lambda)>0 \text{ for }1 \leq j \leq k\}.
\end{align*}
In these cases the fully nonlinear Loewner-Nirenberg problem is also referred to as the $\sigma_k$-Loewner-Nirenberg problem. Note that, since the trace of the Schouten tensor is a positive multiple of the scalar curvature, the $\sigma_1$-Loewner-Nirenberg problem is the original problem considered by Loewner \& Nirenberg on Euclidean domains.\medskip

The fully nonlinear Loewner-Nirenberg problem is a type of uniformisation problem within a fixed conformal class, well-motivated from both a geometric and a PDE perspective. Indeed, the Schouten tensor arises as the trace part of the Riemann curvature tensor in the so-called Ricci decomposition:
\begin{align}\label{101}
\operatorname{Riem}_g = W_g + A_g \KN g.
\end{align}
Here, $W_g$ is the $(0,4)$-Weyl tensor of $g$ (which is trace-free) and $\KN$ is the Kulkarni-Nomizu product. In light of the conformal invariance of $g^{-1}W_g$, \eqref{101} demonstrates that the conformal transformation properties of $\operatorname{Riem}_g$ are completely determined by those of $A_g$. Moreover, as shown by Li \& Li in \cite{LL03}, the Schouten tensor plays a central role in the characterisation of conformally invariant operators on $\mathbb{R}^n$ (see also \cite{LLL21, LN09b}). From a PDE point of view, if $g_w = w^{-2}g_0$ then $A_{g_w}$ and $A_{g_0}$ are related by 
\begin{align}\label{104}
A_{g_w} = w^{-1}\nabla_{g_0}^2w - \frac{1}{2}w^{-2}|dw|_{g_0}^2\, g_0 + A_{g_0}
\end{align}
and thus \eqref{105} is a fully nonlinear, non-uniformly elliptic equation, similar in structure to the Hessian equations that have received significant attention since the seminal work of Caffarelli, Nirenberg \& Spruck \cite{CNS3}. Due to the lower order terms in \eqref{104}, \eqref{105} is also recognised as an augmented Hessian equation. In the study of these equations, the so-called MTW condition, introduced by Ma, Trudinger \& Wang in \cite{MTW}, has played an important role (see e.g.~\cite{JT17, JT18, JT19} in the Euclidean setting and the references therein). We point out that \eqref{105} does \textit{not} satisfy the MTW condition when $f\not=c\sigma_1$ due to the minus sign in front of the gradient term in \eqref{104}. For some recent work on augmented Hessian equations on Riemannian manifolds, see e.g.~Duncan \cite{D23}, Guan \cite{Guan14, Guan23} and Guan \& Jiao \cite{GJ15, GJ16}.\medskip

When $M\subset\mathbb{R}^n$ is a domain with smooth boundary and $g_0$ is the Euclidean metric, the fully nonlinear Loewner-Nirenberg problem was solved by Gonz\'alez, Li \& Nguyen \cite{GLN18}. However, current existence results on general Riemannian manifolds are sensitive to the value $\mu_\Gamma^+$, defined by Li \& Nguyen \cite{LN14b} to be the quantity satisfying
\begin{align*}
(-\mu_\Gamma^+,1,\dots,1)\in\partial\Gamma.
\end{align*}
Indeed, on an arbitrary compact Riemannian manifold with non-empty boundary, the existence of a solution to the fully nonlinear Loewner-Nirenberg problem is only known when $\mu_\Gamma^+>1$, due to previous work of the authors in \cite{DN23}. See also the combination of results of Yuan \cite{Yuan22, Yuan24} for related work in the case $\mu_\Gamma^+ \geq 1$ under the additional assumption $(1,0,\dots,0)\in\Gamma$. Note that $\mu_{\Gamma_k^+}^+ = \frac{n-k}{k}$ and hence $\mu_{\Gamma_k^+}^+ > 1$ if and only if $k< \frac{n}{2}$. More generally, by \eqref{21'} and \eqref{22'}, $\mu_\Gamma^+$ is well-defined, uniquely determined by $\Gamma$ and satisfies $\mu_\Gamma^+\in[0,n-1]$. \medskip 

The main result of this paper (see Theorem \ref{A'} below) extends our existence result in \cite{DN23} to the case $\mu_\Gamma^+ = 1$ without any further assumption on the cone $\Gamma$. This is of significance on both geometric and analytical grounds. Geometrically, the case $\Gamma = \Gamma_{n/2}^+$ in even dimensions is particularly interesting due to the relation between the $\sigma_{n/2}$-curvature, the Pfaffian of the curvature form and the Chern-Gauss-Bonnet formula -- see e.g.~the pioneering work of Chang, Gursky \& Yang \cite{CGY02a, CGY02b} in the context of positive curvature, and the more recent work of Graham \& Gursky \cite{GG21} in the negative curvature setting. There is also a close relationship between the constraint $\lambda(\pm g^{-1}A_g)\in\Gamma$ when $\mu_\Gamma^+ \leq 1$ and the sign of the Ricci curvature of $g$ -- see e.g.~\cite{GVW03, GV06, LN14}. The case $\mu_\Gamma^+=1$ also presents analytical challenges due to the recently observed failure of certain estimates. In \cite{CLL23}, Chu, Li \& Li  observed the failure of local interior gradient estimates depending on one-sided $C^0$ bounds and the failure of the Liouville theorem on $\mathbb{R}^n$ precisely when $\mu_\Gamma^+ = 1$. More recently, the authors observed in \cite{DN25a} that local boundary $C^0$ estimates fail when $\mu_\Gamma^+ \leq 1$, and the statement of the Liouville theorem on $\mathbb{R}^n_+$ is fundamentally different in the cases $\mu_\Gamma^+ \leq 1$ and $\mu_\Gamma^+ >1$. In particular, the hyperbolic metric is the unique solution to the fully nonlinear Loewner-Nirenberg problem on $\mathbb{R}_+^n$ if and only if $\mu_\Gamma^+ >1$. \medskip 

In fact, rather than just addressing the case $\mu_\Gamma^+ =1$, we prove a more quantitative statement. Given a Riemannian manifold $(M,g)$ with non-empty boundary $\partial M$, we let $i_{(M,g)}$ denote the injectivity radius of $(M,g)$, $i_{(\partial M,g)}$ the injectivity radius of $\partial M$ equipped with the induced metric (which we also denote by $g$), $i^b_{(M,g)}$ the boundary injectivity radius of $(M,g)$ (see e.g.~\cite[Section 3]{AKKLT04} for the definition) and $H$ the mean curvature of $\partial M$.

\begin{defn}\label{500}
	For $R_0, S_0 \geq 0$, $i_0, d_0>0$ and $\sigma\in(0,1)$, let $\mathcal{M}^n_\sigma(R_0, i_0, S_0,d_0)$ denote the set of smooth $n$-dimensional Riemannian manifolds $(M, g)$ with non-empty boundary $\partial M$ such that
	\begin{align}\label{44}
	-R_0g \leq \operatorname{Ric}_{(M,g)} \leq R_0 g \quad \text{and} \quad -R_0 g \leq \operatorname{Ric}_{(\partial M, g)} \leq R_0 g, 
	\end{align}
	\begin{align}\label{45}
	i_{(M,g)} \geq i_0, \quad i_{(\partial M, g)} \geq i_0 \quad \text{and} \quad i^b_{(M,g)} \geq i_0, 
	\end{align}
	\begin{align}\label{46}
	\|H\|_{C^{0,\sigma}(\partial M, g)} \leq S_0
	\end{align}
	and
	\begin{align}\label{49}
	\operatorname{diam}(M,g) \leq d_0.
	\end{align}
\end{defn}

Our first main result is as follows:
\begin{thm}\label{A'}
	Let $n\geq 3$, $R_0, S_0 \geq 0$, $i_0, d_0>0$ and $\sigma\in(0,1)$. There exists a constant $\delta = \delta(n, R_0, i_0, S_0, d_0, \sigma)>0$ such that for any $(M,g_0)\in \mathcal{M}_\sigma^n(R_0, i_0, S_0, d_0)$ and any $(f,\Gamma)$ satisfying \eqref{21'}--\eqref{25'} and $\mu_\Gamma^+ > 1-\delta$, there exists a maximal locally Lipschitz viscosity solution $g_u = u^{-2}g_0$ to \eqref{105} satisfying 
	\begin{align}\label{117}
	\lim_{\operatorname{d}_{g_0}(x,\partial M)\rightarrow 0}\, \frac{u(x)}{\mathrm{d}_{g_0}(x,\partial M)}= 1.
	\end{align}
In particular, $g_u$ is complete\footnote{Here and henceforth, when $g_u$ is non-smooth, by completeness of $g_u$ we mean completeness of the metric space on the interior of $M$ induced by $g_u$.} in the interior of $M$. Moreover, if $(1,0,\dots,0)\in\Gamma$, then $u$ is smooth and it is the unique solution in the class of continuous viscosity solutions satisfying $u=0$ on $\partial M$.  
\end{thm}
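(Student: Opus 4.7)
The plan is to reduce Theorem \ref{A'} to the general reduction result promised in the abstract, which asserts that the existence of any conformal background metric $g \in [g_0]$ with $\lambda(-g^{-1}A_g) \in \Gamma$ pointwise on $M$ suffices to produce a maximal Loewner-Nirenberg solution with the asymptotic \eqref{117} (and the smoothness/uniqueness statements when $(1,0,\ldots,0)\in\Gamma$). Since this reduction is stated to be independent of $\mu_\Gamma^+$, the entire quantitative content of Theorem \ref{A'} collapses to the single task of constructing such an admissible metric under the assumption $\mu_\Gamma^+ > 1 - \delta$, with $\delta$ depending only on $(n, R_0, i_0, S_0, d_0, \sigma)$.

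For the construction I would take $g = v^{-2} g_0$ for a positive function $v$ chosen so that the quantity
\[
-g^{-1}A_g = -v\, \nabla_{g_0}^2 v + \tfrac12 |dv|_{g_0}^2\, \operatorname{Id} - v^2\, g_0^{-1}A_{g_0},
\]
computed from \eqref{104}, has eigenvalues in $\Gamma$ everywhere. A natural choice is $v = \phi(\rho)$, where $\rho$ is a smoothed signed distance to $\partial M$ and $\phi$ is a one-variable profile vanishing at $0$, glued to a positive constant away from the boundary. In Fermi coordinates, the eigenvalues of $-g^{-1}A_g$ split into a radial eigenvalue approximately equal to $-\phi\phi'' + \tfrac12 (\phi')^2$ and tangential eigenvalues approximately equal to $-\phi\phi' H_\rho + \tfrac12(\phi')^2 + O(\phi^2)$. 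The profile $\phi$ can be tuned so that the radial eigenvalue dominates, and the margin needed for the resulting vector to lie in $\Gamma$ is precisely what the condition $(-\mu_\Gamma^+,1,\ldots,1)\in\partial\Gamma$ quantifies. The bounded-geometry hypotheses \eqref{44}--\eqref{49} are then used to patch the boundary construction with an interior reference function via a finite partition of unity on Fermi charts of radius $\tfrac12 i_0$, with the $C^{0,\sigma}$ bound on $H$ controlling the tangential correction and the Ricci bound controlling the $v^2 g_0^{-1} A_{g_0}$ term.

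The main obstacle is carrying out this construction at the sharp threshold: for $\mu_\Gamma^+ > 1$ one has macroscopic room to choose $\phi$ (this is essentially the argument used in \cite{DN23}), whereas as $\mu_\Gamma^+$ approaches $1$ the admissible ratio between radial and tangential eigenvalues narrows. My first step toward overcoming this would be a quantitative aperture lemma: if $\mu_\Gamma^+ > 1-\delta$ then $(-1 + c\delta,1,\ldots,1)\in\Gamma$ for some universal $c>0$, so the perturbation tolerated by $\Gamma$ is at least linear in $\delta$. One then estimates each error term in the eigenvalue expansion of $-g^{-1}A_g$ uniformly in terms of $(R_0, i_0, S_0, d_0, \sigma)$ and selects $\phi$, the cut-off scales, and the width of the Fermi neighbourhood so that every error is absorbed by $c\delta/2$.

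Once the admissible $g$ has been produced, the reduction theorem delivers the maximal locally Lipschitz viscosity solution $u$ and the asymptotic \eqref{117}. The smoothness and uniqueness statements under the extra hypothesis $(1,0,\ldots,0)\in\Gamma$ are then standard consequences of uniform ellipticity along admissible graphs: Evans--Krylov-type interior estimates upgrade $u$ to a smooth classical solution, and a viscosity comparison argument combined with the sharp boundary rate \eqref{117} and the maximality of $u$ yields uniqueness in the class of continuous viscosity solutions vanishing on $\partial M$.
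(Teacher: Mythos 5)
Your overall reduction is the right one and matches the paper's skeleton: Theorem~\ref{A'} does follow from the conjunction of an ``admissible metric exists'' result (Theorem~\ref{B'}) and the general reduction theorem (Theorem~\ref{C}), and the latter's independence of $\mu_\Gamma^+$ is correctly invoked. However, your proposed construction of the admissible metric has two genuine gaps. First, you take $v = \phi(\rho)$ with $\phi$ \emph{vanishing} at $0$, so $g = v^{-2}g_0$ blows up on $\partial M$; but Theorem~\ref{C} requires a \emph{continuous} metric $g_w = w^{-2}g_0$ (with $w>0$) on all of $M$, including the boundary, so the conformal factor must be bounded away from zero. Second, and more seriously, your interior gluing does not close. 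If $v$ is locally constant away from a boundary collar, then $-g^{-1}A_g = -v^2\,g_0^{-1}A_{g_0}$ there, and since $\Gamma$ is a cone the constant $v$ cannot move $\lambda(-v^2 g_0^{-1}A_{g_0})$ into $\Gamma$ unless $g_0$ were already admissible --- which is precisely what is not assumed. The distance-to-boundary function also develops critical points in the interior (the cut locus of $\partial M$), so the gradient and Hessian terms you rely on vanish exactly where you need them.

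The paper avoids both problems by using a globally critical-point-free auxiliary function $v \geq 1$ on the compact manifold with boundary (Lemma~\ref{8}) and the conformal factor $g^N = e^{2e^{Nv}}g$. This keeps the conformal factor bounded away from zero, provides a nondegenerate gradient term everywhere, and yields the precise eigenvalue structure $\frac{1}{2}N^2 e^{2Nv}|dv|_g^2\left(-1 + \tfrac{2}{e^{Nv}},\,1,\dots,1\right) + (\text{error})$, where the correction $\tfrac{2}{e^{Nv}}$ is exactly the margin needed to push past the $\mu_\Gamma^+ = 1$ threshold. Your ``quantitative aperture'' step is essentially just the openness/convexity of $\Gamma$, not a separate lemma; the real issue is producing a perturbation term that is \emph{uniformly bounded below} over $M$ without being swamped by the background curvature, which the exponential-in-exponential factor accomplishes and a distance-to-boundary profile does not. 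Finally, to make $\delta$ depend only on $(n,R_0,i_0,S_0,d_0,\sigma)$, the paper does not estimate all error terms directly as you propose; it argues by contradiction via a $C^{1,\sigma'}$-compactness theorem for $\mathcal{M}_\sigma^n(R_0,i_0,S_0,d_0)$ (Theorems~\ref{19} and~\ref{20}), which is substantially cleaner and sidesteps the need to track constants through Fermi coordinate patches.
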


\begin{rmk}
	For fixed $R_0, i_0, S_0,d_0$ and $\sigma$, there are only finitely many diffeomorphism types of manifolds $M$ with non-empty boundary $\partial M$ admitting metrics $g$ such that $(M,g)\in\mathcal{M}^n_\sigma(R_0, i_0, S_0, d_0)$ -- see Remark \ref{501}. 
\end{rmk}

Our next main result demonstrates that if $M$ is a domain inside a closed Riemannian manifold, then one may remove the dependence of $\delta$ in Theorem \ref{A'} on bounds for $\operatorname{Ric}_{(\partial M,g)}, i_{(\partial M,g)}, i_{(M,g)}^b$ and $\|H\|_{C^\sigma(\partial M,g)}$. 

\begin{defn}\label{502}
	For $R_0\geq 0$ and $i_0,d_0>0$, let $\widetilde{\mathcal{M}}^n(R_0, i_0, d_0)$ denote the set of smooth closed $n$-dimensional Riemannian manifolds $(N,g)$ such that
	\begin{align*}
	-R_0 g \leq \operatorname{Ric}_{(N,g)} \leq R_0 g, \quad i_{(N,g)} \geq i_0 \quad \text{and} \quad \operatorname{diam}(N,g) \leq d_0. 
	\end{align*}
\end{defn}

We prove:

\begin{thm}\label{H''}
	Let $n\geq 3$, $R_0 \geq 0$ and $i_0, d_0>0$. Suppose $(N,g)\in\widetilde{\mathcal{M}}^n(R_0, i_0, d_0)$ and let $\Omega$ be a non-empty open subset of $N$. Then there exists $\ep = \ep(n, N, \Omega, R_0, i_0, d_0)>0$ such that \eqref{105} admits a maximal locally Lipschitz viscosity solution $g_u = u^{-2}g_0$ satisfying \eqref{117} whenever $M\subset N\backslash \Omega$ is the closure of a domain with smooth non-empty boundary and $(f,\Gamma)$ satisfies \eqref{21'}--\eqref{25'} and $\mu_\Gamma^+>1-\ep$. In particular, $g_u$ is complete in the interior of $M$. Moreover, if $(1,0,\dots,0)\in\Gamma$, then $u$ is smooth and it is the unique solution in the class of continuous viscosity solutions satisfying $u=0$ on $\partial M$. 
\end{thm}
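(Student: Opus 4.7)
I would deduce Theorem \ref{H''} from Theorem \ref{A'} together with the ``soft'' auxiliary existence result announced in the abstract: namely, that the fully nonlinear Loewner-Nirenberg problem on $M$ is solvable, with no restriction on $\mu_\Gamma^+$, whenever there exists a conformal metric $g\in[g_0]$ on $M$ with $\lambda(-g^{-1}A_g)\in\Gamma$. Granted this reduction, the task becomes to produce such a conformal metric on $M$, with a quantitative threshold on $1-\mu_\Gamma^+$ depending only on $(N,g)$ and $\Omega$ (not on the particular choice of $M$).

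To produce it, first fix an auxiliary open set $\Omega'\Subset\Omega$ with smooth non-empty boundary $\partial\Omega'$, chosen depending only on $(N,g)$ and $\Omega$. Then every admissible $M\subset N\setminus\Omega$ lies at positive distance from $\partial\Omega'$, and the compact manifold-with-boundary $(N\setminus\Omega', g)$ belongs to $\mathcal{M}^n_\sigma(R_0', i_0', S_0', d_0')$ for constants $R_0', i_0', S_0', d_0'$ determined by $n, N, \Omega, R_0, i_0, d_0$: the interior Ricci bounds are inherited from $N$, while the boundary Ricci bound, interior and boundary injectivity radii, and H\"older bound on the mean curvature of $\partial\Omega'$ are finite by compactness and smoothness of the fixed submanifold $\partial\Omega'$.

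Next, I would apply Theorem \ref{A'} to $(N\setminus\Omega', g)$ to extract $\delta=\delta(n,N,\Omega,R_0,i_0,d_0)>0$ such that, whenever $\mu_\Gamma^+>1-\delta$, there is a maximal locally Lipschitz viscosity solution $g_v=v^{-2}g$ of \eqref{105} on $N\setminus\Omega'$ satisfying \eqref{117} along $\partial\Omega'$. Setting $\ep\defeq\delta$, the positive separation between $M$ and $\partial\Omega'$ forces $v$ to be bounded above and below by positive constants on $M$, so that $g_v|_M$ is a bona fide conformal metric on $M$ with $\lambda(-g_v^{-1}A_{g_v})\in\Gamma$ (classically if $(1,0,\dots,0)\in\Gamma$, by the smoothness part of Theorem \ref{A'}, and in the viscosity sense in general). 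The soft existence result applied on $M$ with this background then delivers the maximal locally Lipschitz viscosity solution to \eqref{105} on $M$ satisfying \eqref{117}, and the smoothness and uniqueness assertions when $(1,0,\dots,0)\in\Gamma$ transfer directly from the corresponding statements in the soft result.

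The main obstacle is a regularity matching issue: the soft result may prefer a classical (or at least $C^2$) admissible conformal metric as input, whereas Theorem \ref{A'} only furnishes a locally Lipschitz viscosity solution in general. To bridge this gap I would exploit the fact that $M$ sits in the region where $v$ is uniformly bounded away from $0$ and the equation is uniformly elliptic, so that interior regularity theory (or, failing that, a suitable mollification/approximation of $v$ by a smooth admissible function on an open neighborhood of $M$ inside $N\setminus\overline{\Omega'}$) should upgrade $g_v|_M$ to a smooth admissible conformal metric on $M$ if required. The remaining work is a bookkeeping exercise ensuring that all geometric bounds on $N\setminus\Omega'$ depend only on $N$ and $\Omega$, so that the final constant $\ep$ has the claimed dependence.
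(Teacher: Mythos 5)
Your overall plan --- produce an admissible conformal metric on a set slightly larger than $M$, restrict to $M$, and feed it into the ``soft'' existence result (Theorem \ref{C}) --- is the correct architecture, and indeed it is how the paper deduces Theorem \ref{H''}. You could also simplify the second half: rather than applying Theorem \ref{A'} to produce a full Loewner--Nirenberg solution $v$ on $N\setminus\Omega'$ and then worrying about its regularity, it suffices to invoke Theorem \ref{B'}, which already supplies a \emph{smooth} metric $g'\in[g]$ with $\lambda(-g'^{-1}A_{g'})\in\Gamma$ on $N\setminus\Omega'$; restricting that to $M$ gives a smooth admissible background, and the entire ``regularity matching'' discussion disappears.

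However, there is a genuine gap in the step you describe as a ``bookkeeping exercise.'' You claim that $(N\setminus\Omega',g)\in\mathcal{M}^n_\sigma(R_0',i_0',S_0',d_0')$ with constants determined only by $n,N,\Omega,R_0,i_0,d_0$. The interior Ricci and diameter bounds do transfer, but the remaining quantities in Definition \ref{500} --- $\operatorname{Ric}_{(\partial\Omega',g)}$, $i_{(\partial\Omega',g)}$, $i^b_{(N\setminus\Omega',g)}$ and $\|H\|_{C^{0,\sigma}(\partial\Omega',g)}$ --- are all governed by the second fundamental form and the extrinsic geometry of the fixed hypersurface $\partial\Omega'$ with respect to the varying metric $g$, and none of these are controlled by the ambient bounds defining $\widetilde{\mathcal{M}}^n(R_0,i_0,d_0)$. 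They are ``finite by compactness'' for a single fixed $g$, but they can degenerate along a sequence $g_i$ with $(N,g_i)\in\widetilde{\mathcal{M}}^n(R_0,i_0,d_0)$. Consequently the $\delta$ furnished by Theorem \ref{A'} (or \ref{B'}) for $(N\setminus\Omega',g)$ is only $\delta=\delta(n,N,\Omega,g)$, not $\delta(n,N,\Omega,R_0,i_0,d_0)$ as required. The whole purpose of Theorem \ref{H''}, as flagged in the introduction, is precisely to \emph{remove} the dependence of $\delta$ on boundary geometry, which is why the paper cannot --- and does not --- reduce Theorem \ref{H''} to Theorem \ref{A'}.

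Instead, the paper proves a separate statement (Theorem \ref{H}) by running the compactness--contradiction argument purely at the level of the closed manifold $N$: Theorems \ref{19'} and \ref{20'} give precompactness of the class $\widetilde{\mathcal{M}}^n(R_0,i_0,d_0)$ in the $C^{1,\sigma'}$-topology using only the interior Ricci, injectivity and diameter bounds, and Lemma \ref{12} (a variant of Lemma \ref{8}) replaces the Morse function on a manifold-with-boundary by a function on the closed manifold $N$ with no critical points in $N\setminus\Omega$. This sidesteps all boundary-geometry bookkeeping. To repair your argument you would essentially have to reprove this compactness-and-auxiliary-function machinery, at which point you have reproduced the proof of Theorem \ref{H}.
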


Let us now discuss some previous results on the fully nonlinear Loewner-Nirenberg problem to put Theorems \ref{A'} and \ref{H''} into context. The $\sigma_k$-Loewner-Nirenberg problem was first studied for $k\geq 2$ by Mazzeo \& Pacard in \cite{MP03}, where they established perturbative existence results and studied links with the existence of Poincar\'e-Einstein metrics. The first general existence result for the fully nonlinear Loewner-Nirenberg problem was obtained by Gonz\'alez, Li \& Nguyen in \cite{GLN18}, where they showed (among other results) that any smooth bounded Euclidean domain $\Omega$ admits a complete conformally flat locally Lipschitz metric solving \eqref{105} in the viscosity sense, and moreover the solution is unique in the class of continuous viscosity solutions. In later work of Li \& Nguyen \cite{LN20b} and Li, Nguyen \& Xiong \cite{LNX22}, it was shown that the solution of \cite{GLN18} is not $C^1_{\operatorname{loc}}$ if $\Gamma\subseteq \Gamma_2^+$ and $\partial\Omega$ contains more than one connected component; this is in stark contrast to the case $f=c\sigma_1$, where solutions are always smooth in the interior. Recently, Wu \cite{Wu24} showed that solutions to the $\sigma_k$-Loewner-Nirenberg problem are smooth away from a set of measure zero if $k=n$, or if $k>\frac{n}{2}$ and $g_0$ is Euclidean.\medskip 

In the Riemannian setting, Guan \cite{Guan08} proved existence of smooth solutions to a related problem in which the Schouten tensor is replaced by the Ricci tensor, under the assumption that there exists a smooth metric $g\in[g_0]$ satisfying $\lambda(-g^{-1}\operatorname{Ric}_g)\in\Gamma$ on $M$. Such equations involving the Ricci tensor can be recast as special instances of \eqref{105} in which $(1,0,\dots,0)\in\Gamma$ and $\mu_\Gamma^+ \leq 1$ (see Remark \ref{6}). Guan's existence results were subsequently extended by Yuan \cite{Yuan22}, who proved the existence of a smooth complete metric $g_w = w^{-2}g_0$ solving \eqref{105}, assuming $(1,0,\dots,0)\in\Gamma$ and the existence of $g\in[g_0]$ satisfying $\lambda(-g^{-1}A_{g})\in\Gamma$. In \cite{Yuan22}, Yuan also proves uniqueness of solutions and the asymptotics in \eqref{300} under a further technical assumption on $\Gamma$. We point out that the condition $(1,0,\dots,0)\in\Gamma$ fails in many cases of interest, in particular the $\sigma_k$-Loewner-Nirenberg problem when $k\geq 2$.\medskip

To gain a more complete understanding of the fully nonlinear Loewner-Nirenberg problem, it is therefore of significant interest to determine whether the aforementioned assumptions on conformal flatness, the cone $\Gamma$ and/or the existence of certain metrics $g\in[g_0]$ can be removed. In this direction, Gursky, Streets \& Warren \cite{GSW11} obtained the same existence result of Guan \cite{Guan08} without assuming the existence of $g\in[g_0]$ satisfying $\lambda(-g^{-1}\operatorname{Ric}_g)\in\Gamma$; they also proved uniqueness and asymptotics of solutions. In \cite{DN23}, the present authors proved the existence of a maximal Lipschitz viscosity solution to the fully nonlinear Loewner-Nirenberg problem provided $\mu_\Gamma^+>1$, without any assumption on the existence of $g\in[g_0]$ satisfying $\lambda(-g^{-1}A_g)\in\Gamma$. Therein, we also proved asymptotics of solutions, and showed that solutions are smooth and unique (in the class of continuous viscosity solutions) when $(1,0,\dots,0)\in\Gamma$. We point out that some of the estimates in \cite{DN23} do not hold when $\mu_\Gamma^+ \leq 1$ (see our recent work \cite{DN25a} for a further discussion). Using Morse-theoretic methods, Yuan \cite{Yuan24} independently proved the existence of a smooth metric $g\in[g_0]$ satisfying $\lambda(-g^{-1}A_g)\in\Gamma$ in the following three cases: (1) $n=3$, (2) $n\geq 4$ and $\mu_\Gamma^+>1$, and (3) $n\geq 4$, $\mu_\Gamma^+ \geq 1$ and $(1,0,\dots,0)\in\Gamma$.\medskip

We now make some remarks on the proof of Theorem \ref{A'} (the proof of Theorem \ref{H''} follows similar ideas). The approach can be split into two main steps:
\begin{enumerate}
	\item First, we show that there exists a constant $\delta = \delta(n, R_0, i_0, S_0, d_0, \sigma)>0$ such that whenever $(M,g_0)$ belongs to $\mathcal{M}_\sigma^n(R_0, i_0, S_0, d_0)$ and $\Gamma$ satisfies \eqref{21'}, \eqref{22'} and $\mu_\Gamma^+ > 1-\delta$, there exists a smooth metric $g\in[g_0]$ satisfying $\lambda(-g^{-1}A_g)\in\Gamma$.
	
	\item Second, we show that the fully nonlinear Loewner-Nirenberg problem admits a maximal locally Lipschitz viscosity solution (which is smooth and the unique continuous viscosity solution when $(1,0,\dots,0)\in\Gamma$) satisfying \eqref{117} whenever there exists $g\in[g_0]$ satisfying $\lambda(-g^{-1}A_g)\in\Gamma$ on $M$. 
\end{enumerate} 

The approach in Step 1 is to use a global auxiliary function to obtain a metric $g\in[g_0]$ satisfying $\lambda(-g^{-1}A_g)\in\Gamma$. The idea of constructing a suitable auxiliary function to obtain an admissible metric was used by Gursky, Streets \& Warren in \cite{GSW11}, who gave an explicit construction involving the distance function to a point outside the manifold. We instead borrow ideas from Yuan \cite{Yuan24}, who used a non-explicit auxiliary Morse function to construct a smooth metric $g\in[g_0]$ satisfying $\lambda(-g^{-1}A_g)\in\Gamma$ when $\mu_\Gamma^+>1$, or when $\mu_\Gamma^+ \geq 1$ under the additional assumption $(1,0,\dots,0)\in\Gamma$. We show that Yuan's choice of $g$ in fact satisfies $\lambda(-g^{-1}A_g) \in\Gamma$ whenever $\mu_\Gamma^+ \geq 1$, without any further assumptions on $\Gamma$. We then appeal to the $C^{\ell,\sigma}$-compactness theory on compact manifolds with boundary under suitable curvature bounds, following the methods of Anderson et.~al.~\cite{AKKLT04}, to obtain the improvement $\mu_\Gamma^+ > 1-\delta(R_0, i_0, S_0, d_0, \sigma)$. For later reference, we state the main result in Step 1 as a separate result:

\begin{thm}\label{B'}
	Let $n\geq 3$, $R_0, S_0 \geq 0$, $i_0, d_0>0$ and $\sigma\in(0,1)$. There exists a constant $\delta = \delta(n, R_0, i_0, S_0, d_0, \sigma)>0$ such that the following holds. Suppose $(M,g_0)\in\mathcal{M}_\sigma^n(R_0, i_0, S_0, d_0)$ and $\Gamma$ satisfies \eqref{21'}, \eqref{22'} and $\mu_\Gamma^+>1-\delta$. Then there exists a smooth metric $g\in[g_0]$ such that $\lambda(-g^{-1}A_g)\in\Gamma$ on $M$. 
\end{thm}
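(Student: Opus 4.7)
The plan is to prove Theorem \ref{B'} in two stages. First, I will establish the \emph{threshold} case: for every $(M,g_0)\in\mathcal{M}_\sigma^n(R_0,i_0,S_0,d_0)$ and every admissible $\Gamma$ with $\mu_\Gamma^+\ge 1$, there exists a smooth $g\in[g_0]$ with $\lambda(-g^{-1}A_g)\in\Gamma$. Second, I will convert the closed condition $\mu_\Gamma^+\ge 1$ into the open condition $\mu_\Gamma^+>1-\delta$, uniformly on the class $\mathcal{M}_\sigma^n$, via the $C^{\ell,\sigma}$-compactness theory of Anderson--Katsuda--Kurylev--Lassas--Taylor \cite{AKKLT04}.

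For Stage 1, following Yuan \cite{Yuan24}, I would take an auxiliary Morse function $\phi:M\to\mathbb{R}$ and seek the metric in the form $g=e^{2u}g_0$ with $u=t\phi+\psi$, where $t\gg 1$ and $\psi$ is a small smooth correction. A direct conformal computation from \eqref{104} gives
\[
-g^{-1}A_g\;=\;e^{-2u}\bigl(-A_{g_0}+\nabla^2_{g_0}u-du\otimes du+\tfrac12|du|^2_{g_0}\,g_0\bigr),
\]
so the task reduces to showing that the eigenvalue vector of the bracketed endomorphism lies in $\Gamma$ pointwise. Wherever $du\neq 0$, diagonalising in an orthonormal basis with $e_1\parallel du$ shows that the leading-in-$t$ behaviour, after normalisation, is precisely the vector $(-1,1,\dots,1)$, which sits in $\overline{\Gamma}$ iff $\mu_\Gamma^+\ge 1$ by the definition of $\mu_\Gamma^+$. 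At the threshold $\mu_\Gamma^+=1$ this vector lies on $\partial\Gamma$, so I must use the subleading term $t\nabla^2\phi-A_{g_0}$ to push it strictly inside: by the symmetry and convexity of $\Gamma$, one identifies an outward normal to $\partial\Gamma$ at $(-1,1,\dots,1)$ and verifies that, with a suitable choice of $\psi$, the correction has strictly positive inner product with this normal. Near critical points of $\phi$ the quadratic-in-$t$ term degenerates and one instead relies on $\nabla^2 u$: here $\psi$ is chosen so that $\nabla^2\psi$ is strictly positive-definite on a neighbourhood of the critical set, yielding eigenvalues in $\Gamma_n^+\subseteq\Gamma$.

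For Stage 2 I argue by contradiction. If the conclusion fails there exist $\delta_k\downarrow 0$, manifolds $(M_k,g_0^k)\in\mathcal{M}_\sigma^n(R_0,i_0,S_0,d_0)$, and cones $\Gamma_k$ with $\mu_{\Gamma_k}^+>1-\delta_k$ admitting no admissible conformal metric; Stage 1 forces $\mu_{\Gamma_k}^+<1$ and hence $\mu_{\Gamma_k}^+\to 1$. Applying \cite{AKKLT04}, after a subsequence and diffeomorphisms, $g_0^k\to g_0^\infty$ in $C^{\ell,\sigma}$ with $(M_\infty,g_0^\infty)\in\mathcal{M}_\sigma^n(R_0,i_0,S_0,d_0)$. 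Separately, the space of cones between $\Gamma_n^+$ and $\Gamma_1^+$ is compact under Hausdorff convergence on the unit sphere and $\Gamma\mapsto\mu_\Gamma^+$ is continuous in that topology, so on a further subsequence $\Gamma_k\to\Gamma_\infty$ with $\mu_{\Gamma_\infty}^+=1$. Stage 1 then produces a smooth $g_\infty\in[g_0^\infty]$ with $\lambda(-g_\infty^{-1}A_{g_\infty})\in\Gamma_\infty$ at a uniform distance $\rho>0$ from $\partial\Gamma_\infty$ (by compactness of $M_\infty$). Pulling $g_\infty$ back to $M_k$ via the AKKLT diffeomorphisms yields $g_k\in[g_0^k]$ with $g_k\to g_\infty$ in $C^{\ell,\sigma}$; combined with Hausdorff convergence $\Gamma_k\to\Gamma_\infty$ and the margin $\rho$, this gives $\lambda(-g_k^{-1}A_{g_k})\in\Gamma_k$ for large $k$, contradicting the choice of $\Gamma_k$.

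The hard part is the threshold analysis in Stage 1, in which one must verify that for \emph{every} cone with $\mu_\Gamma^+=1$ (and in particular without the prior extra assumption $(1,0,\dots,0)\in\Gamma$ used by Yuan) the subleading corrections in the eigenvalue expansion push the limiting boundary vector $(-1,1,\dots,1)$ strictly into $\Gamma$, and to match this regime with the degenerate behaviour at critical points of $\phi$ via a single global choice of $u$. Stage 2 is then comparatively routine once the appropriate compactness on the space of cones is set up.
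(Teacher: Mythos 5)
Your two-stage plan matches the paper's overall plan --- build an admissible conformal metric via an auxiliary function, then use $C^{\ell,\sigma}$-compactness to make the estimate uniform over $\mathcal{M}_\sigma^n$ --- but both stages have genuine gaps, and the most important one sits exactly where you flagged the difficulty.

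\textbf{Stage 1 (the threshold case $\mu_\Gamma^+=1$).} The ansatz $u=t\phi+\psi$ with a Morse function $\phi$ and a ``small'' $\psi$ cannot produce the required strict inequality. For this ansatz the leading $O(t^2)$ part of $\lambda(-g_0^{-1}A_{e^{2u}g_0})$ is exactly $\tfrac12 t^2|d\phi|_{g_0}^2(-1,1,\dots,1)$, with no $O(t^2)$ correction. When $\mu_\Gamma^+=1$ this vector lies on a ray in $\partial\Gamma$, and the lower-order pieces do not move it off: if $d\psi$ is parallel to $d\phi$, the $O(t)$ cross terms from $-du\otimes du$ and $\tfrac12|du|^2\operatorname{Id}$ contribute another scalar multiple of $(-1,1,\dots,1)$, which (because $\Gamma$ is a cone) stays on $\partial\Gamma$; and $t\nabla^2\phi$ has eigenvalues of arbitrary sign at noncritical points, so its projection onto a supporting normal of $\partial\Gamma$ at $(-1,1,\dots,1)$ is uncontrolled. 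An $O(1)$ choice of $\psi$ cannot supply an $O(t^2)$ push. The paper's mechanism is different: it uses the double-exponential conformal factor $g^N=e^{2e^{Nv}}g_0$ with $v\geq1$ having \emph{no critical points} on $M$ (possible since $\partial M\neq\emptyset$, so your separate argument near the critical set is unnecessary). The key computation is that
\[
\nabla^2(e^{Nv}) = Ne^{Nv}\nabla^2 v + N^2 e^{Nv}\,dv\otimes dv,
\]
whose positive rank-one piece partially offsets $-|d(e^{Nv})|^2$-type terms and, after normalisation, turns the leading eigenvalue vector into
\[
\tfrac12 N^2 e^{2Nv}|dv|_{g_0}^2\Bigl(-1+\tfrac{2}{e^{Nv}},\,1,\dots,1\Bigr).
\]
The positive term $\tfrac{2}{e^{Nv}}$, which simply has no analogue in an affine-in-$t$ ansatz, is what moves the first component strictly inside $\Gamma$ once $\mu_\Gamma^+\geq 1-\tfrac{1}{e^{N\max v}}$. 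This is exactly the good term (discarded in Yuan's argument) that Lemma~\ref{8} exploits; without identifying it, Stage~1 is not closed.

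\textbf{Stage 2 (compactness).} Under the available Ricci bounds the AKKLT-type compactness gives only $C^{1,\sigma'}$-convergence ($\sigma'<\sigma$) of the background metrics, so the limit $g_0^\infty$ is merely $C^{1,\sigma}$. Its Schouten tensor is undefined, so Stage~1 cannot be applied on $(M_\infty,g_0^\infty)$; moreover, pulling a conformal metric $g_\infty\in[g_0^\infty]$ back by the comparison diffeomorphisms does not produce a metric in $[g_0^k]$, and even pulling back the conformal factor does not give convergence of Schouten tensors, since that would require $C^2$ control of the metrics. The paper avoids all of this by making the Stage~1 statement \emph{quantitative}: Lemma~\ref{8} gives $\mu(M,[g])\leq 1-\eta$ with $\eta$ depending only on $C^1$-bounds for the metric and its inverse in a fixed covering, and on an $L^\infty$-bound for $A_g$. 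Along the contradiction sequence these quantities are uniformly controlled for the \emph{smooth} metrics $g_i$ themselves (the former from $C^{1,\sigma'}$-convergence, the latter from \eqref{44}), so the lemma is applied directly to the $g_i$; no limit metric is ever used as a background, and no limit cone is needed. Your compactness argument on the space of cones and the continuity of $\mu_\Gamma^+$ are correct observations but become superfluous once Stage~1 is recast as such a uniform bound.
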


For Step 2, we prove the following slightly more general theorem which does not require $g$ to be smooth. This provides a unified result that yields locally Lipschitz viscosity solutions when $(1,0,\dots,0)\in\partial\Gamma$, unique smooth solutions when $(1,0,\dots,0)\in\Gamma$, \textit{and} asymptotics in either case:

\begin{thm}\label{C}
	Suppose that $(f,\Gamma)$ satisfies \eqref{21'}--\eqref{25'}, and let $(M,g_0)$ be a smooth compact Riemannian manifold of dimension $n\geq 3$ with smooth non-empty boundary $\partial M$. Suppose that there exists a continuous metric $g_w = w^{-2}g_0$ satisfying 
	\begin{align*}
	f(\lambda(-g_w^{-1}A_{g_w})) \geq \ep>0  , \quad \lambda(-g_w^{-1}A_{g_w})\in\Gamma 
	\end{align*}
	in the viscosity sense on $M$ for some $\ep>0$. Then \eqref{105} admits a maximal locally Lipschitz viscosity solution $g_u = u^{-2}g_0$ satisfying \eqref{117}. In particular, $g_u$ is complete in the interior of $M$. Moreover, if $(1,0,\dots,0)\in\Gamma$, then $u$ is smooth and it is the unique solution in the class of continuous viscosity solutions satisfying $u=0$ on $\partial M$. 
\end{thm}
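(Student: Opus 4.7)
The plan is to prove the theorem by Perron's method, using the hypothesized metric $g_w$ as a global viscosity upper barrier. This is precisely what allows the boundary and interior estimates developed in \cite{DN23, GLN18} to go through without the hypothesis $\mu_\Gamma^+>1$: those estimates ultimately depend on one-sided $C^0$ control, and the local boundary $C^0$ estimates that are known to fail for $\mu_\Gamma^+\leq 1$ (cf.\ \cite{DN25a}) are replaced here by comparison with $g_w$.

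First, since $f$ is homogeneous of degree one, a suitable rescaling $\tilde w\defeq(2\ep)^{1/2}w$ yields a continuous viscosity supersolution with $f(\lambda(-g_{\tilde w}^{-1}A_{g_{\tilde w}}))\geq\tfrac12$ and $\lambda\in\Gamma$, bounded above and uniformly positive on $M$. For each small $\delta>0$ I would then solve the approximating Dirichlet problem
\begin{align*}
f(\lambda(-g_{u_\delta}^{-1}A_{g_{u_\delta}}))=\tfrac{1}{2}, \quad \lambda(-g_{u_\delta}^{-1}A_{g_{u_\delta}})\in\Gamma \text{ on } M, \quad u_\delta=\delta \text{ on } \partial M,
\end{align*}
via a method of continuity. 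Comparison with $\tilde w$ supplies the uniform one-sided bound $u_\delta\leq\tilde w$, while local sub-barriers of the form $\delta+c\,\mathrm d_{g_0}(\cdot,\partial M)$ near $\partial M$, together with a deep-interior sub-barrier from a small multiple of $\tilde w$, give the matching lower bound and the boundary gradient estimate. Interior $C^k$ bounds then follow from the two-sided $C^0$ estimate and the interior regularity theory for \eqref{105}, which does \emph{not} require $\mu_\Gamma^+>1$ (see \cite{CLL23, DN23}).

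By the comparison principle, $\{u_\delta\}$ is nonincreasing as $\delta\to 0^+$ and bounded below by $0$. Uniform local Lipschitz estimates on compact subsets of $M\setminus\partial M$, which depend only on the two-sided $C^0$ bound $0\leq u_\delta\leq\tilde w$, yield a locally Lipschitz viscosity limit $u$ solving \eqref{105} on $M\setminus\partial M$ with $u=0$ on $\partial M$. To establish the sharp asymptotic \eqref{117}, I would construct Fefferman--Graham type barriers $\underline u(x)=\rho(x)-C\rho(x)^2$ and $\overline u(x)=\rho(x)+C\rho(x)^2$ in a collar of $\partial M$ (with $\rho=\mathrm d_{g_0}(\cdot,\partial M)$) using \eqref{104} and $\Gamma_n^+\subseteq\Gamma$, and sandwich $u_\delta$ between them up to an additive $\delta$-shift before sending $\delta\to 0$.

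For maximality, any continuous viscosity subsolution $v$ of \eqref{105} vanishing on $\partial M$ satisfies $v\leq u_\delta$ by comparison (as $v\leq 0\leq\delta$ on $\partial M$), hence $v\leq u$. In the case $(1,0,\dots,0)\in\Gamma$, the cone $\Gamma$ contains the direction responsible for the gradient term in \eqref{104}, the equation becomes uniformly elliptic once admissible $C^1$ bounds are in place, and Evans--Krylov together with Schauder bootstrapping yields smoothness; uniqueness in the class of continuous viscosity solutions follows from the standard comparison principle. The principal obstacle is the construction in the second step: one must verify that the sub/supersolution argument for the Dirichlet problem yields a priori estimates independent of $\delta$ in the regime $\mu_\Gamma^+\leq 1$, and that the limiting process respects the fine boundary asymptotic \eqref{117}. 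Both hinge essentially on the global barrier $\tilde w$ furnished by the hypothesis, which compensates for the failure of the local boundary estimates at and below the threshold $\mu_\Gamma^+=1$.
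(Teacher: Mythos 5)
Your high-level strategy --- approximate \eqref{105} by Dirichlet problems with boundary data $\delta$, use the hypothesized metric $g_w$ as a global upper barrier via the rescaling $\tilde w = c w$ with $c^2\ge(2\ep)^{-1}$, and pass $\delta\to 0$ --- is indeed what the paper does (via the equivalent formulations Theorems \ref{C'} and \ref{C''}). However, there are three concrete gaps in the details.

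\textbf{The lower $C^0$ bound is circular as written.} Your interior sub-barrier ``a small multiple of $\tilde w$'' does not work: since $f(\lambda(-g_{c\tilde w}^{-1}A_{g_{c\tilde w}})) = c^2 f(\lambda(-g_{\tilde w}^{-1}A_{g_{\tilde w}}))$ and you only know a \emph{lower} bound $f(\lambda(-g_{\tilde w}^{-1}A_{g_{\tilde w}}))\ge\tfrac12$, scaling down by $c<1$ does not produce something with $f(\cdots)\le\tfrac12$; the right-hand side could still be far above $\tfrac12$, so $c\tilde w$ need not be a lower barrier. Similarly, the collar sub-barrier $\delta+c\rho$ cannot be compared to $u_\delta$ on the inner boundary of the collar without already knowing a positive lower bound for $u_\delta$ there, which is what you are trying to prove. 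The paper (Proposition \ref{121}) avoids this circularity entirely: by concavity, homogeneity and the normalization \eqref{25'}, any admissible $\lambda\in\Gamma$ satisfies $f(\lambda)\le\tfrac1n\sigma_1(\lambda)$, hence $R_{g_{u_\delta}}\le -2n(n-1)$, and comparison with the Aviles--McOwen (Yamabe-type) solution $v$ of $R_{g_v}=-2n(n-1)$, $v=\delta$ on $\partial M$, gives $u_\delta\ge v$. Proposition \ref{416} then gives a $\delta$-independent interior lower bound by the same argument with $v=0$ on $\partial M$.

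\textbf{The claim of interior $C^k$ bounds ``which does not require $\mu_\Gamma^+>1$'' is wrong beyond $C^1$.} Only the $C^1$ (gradient) estimate with two-sided $C^0$ input holds in full generality (Chu--Li--Li). Interior \emph{Hessian} estimates require the non-degeneracy $(1,0,\dots,0)\in\Gamma$ (equivalently $\tau<1$ in the $\tau$-parametrization), and indeed \cite{LN20b, LNX22} show that solutions need not be $C^1_{\rm loc}$ when $(1,0,\dots,0)\in\partial\Gamma$. This is why the paper first reformulates Theorem \ref{C} as Theorem \ref{C'} and distinguishes the cases $\tau_0<1$ and $\tau_0=1$: for $\tau<1$ one uses Guan's Hessian estimate, Evans--Krylov and Schauder to get smooth solutions and pass $\delta\to 0$; for the degenerate $\tau_0=1$ case one first produces a smooth $u^\tau$ for each $\tau<1$ and then extracts a locally Lipschitz viscosity limit as $\tau\to 1$. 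Your proposal does not address this $\tau$-regularization step, without which the continuity method and the bootstrapping you invoke do not close.

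\textbf{The boundary gradient estimate needs a new upper barrier, not just $\delta+c\rho$.} For an \emph{upper} bound on the normal derivative of $u_\delta$ at $\partial M$ you need a supersolution ($f\ge\tfrac12$) that agrees with $u_\delta$ at the boundary and lies above $u_\delta$ in a collar, with controlled gradient. This is nontrivial exactly when $\mu_\Gamma^+\le 1$; it is the content of Proposition \ref{4'}, which builds an annular barrier of the form $v(r)=(r^2-R^2)/R^2$ in a half-annulus touching $\partial M$ from outside (a rescaled hyperbolic-type metric). Your sketch omits this construction, which is the genuinely new boundary ingredient of the paper.
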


\begin{proof}[Proof of Theorem \ref{A'}]
	This follows immediately from Theorem \ref{B'} and Theorem \ref{C}.
\end{proof}

\begin{rmk}
	Uniqueness of the Lipschitz viscosity solution to \eqref{105} is also known to hold when $(1,0,\dots,0)\in\partial\Gamma$ on Euclidean domains -- this is due to Gonz\'alez, Li \& Nguyen \cite{GLN18}, who used a comparison principle established by Li, Nguyen \& Wang in \cite{LNW18}. It is not currently known whether such a comparison principle holds in the non-Euclidean setting when $(1,0,\dots,0)\in\partial\Gamma$, and thus uniqueness remains an open question in these cases.
\end{rmk}

\begin{rmk}
	As an immediate consequence of Theorem \ref{C} and the aforementioned existence result of Yuan \cite{Yuan24} when $n=3$, the fully nonlinear Loewner-Nirenberg problem is now solved when $n=3$. 
\end{rmk}

In the proof of Theorem \ref{C}, the existence of a smooth solution to the fully nonlinear Loewner-Nirenberg problem when $(1,0,\dots,0)\in\Gamma$ is achieved by first obtaining for each constant $\delta>0$ a solution $u_\delta$ to \eqref{105} satisfying $u_\delta=\delta$ on $\partial M$, and then taking a limit as $\delta\rightarrow 0$. Our locally Lipschitz viscosity solution to the fully nonlinear Loewner-Nirenberg problem when $(1,0,\dots,0)\in\partial\Gamma$ is then obtained as a limit of solutions to the fully nonlinear Loewner-Nirenberg problem on cones satisfying $(1,0,\dots,0)\in\Gamma$. For this purpose, it is important to have interior $C^1$ estimates (depending on two-sided $C^0$ bounds) which are stable under perturbations of $(f,\Gamma)$ and include both the cases $(1,0,\dots,0)\in\Gamma$ and $(1,0,\dots,0)\in\partial\Gamma$. Such an estimate has recently been established in a general setting by Chu, Li \& Li \cite{CLL23}. The validity of such a local gradient estimate also allows us to apply our estimates near the boundary in \cite[Section 4]{DN23} verbatim, yielding the desired asymptotics of solutions. We refer the reader to Section \ref{s100} for the details.\medskip

As indicated in the previous paragraph, the existence of a locally Lipschitz viscosity solution to the fully nonlinear Loewner-Nirenberg problem when $(1,0,\dots,0)\in\partial\Gamma$ does not require the existence of solutions to \eqref{105} with positive boundary data when $(1,0,\dots,0)\in\partial\Gamma$. However, the existence of solutions to \eqref{105} with positive boundary data when $(1,0,\dots,0)\in\partial\Gamma$ is of independent interest, and requires us to establish a new boundary gradient estimate (see Proposition \ref{123}). The existence result is as follows:

\begin{thm}\label{F}
	Assume the same hypotheses as in Theorem \ref{C} and let $\phi \in C^\infty(\partial M)$ be positive. Then \eqref{105} admits a Lipschitz viscosity solution $g_u = u^{-2}g_0$ satisfying $u=\phi$ on $\partial M$. Moreover, if $(1,0,\dots,0)\in\Gamma$, then $u$ is smooth and it is the unique solution in the class of continuous viscosity solutions satisfying $u=\phi$ on $\partial M$.
\end{thm}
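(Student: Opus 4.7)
The plan is to first prove the theorem under the additional assumption $(1,0,\dots,0) \in \Gamma$, where the conformal Hessian operator is uniformly elliptic along admissible functions bounded between positive constants, and then to recover the general case $(1,0,\dots,0) \in \partial\Gamma$ by approximating $\Gamma$ from outside by a sequence of cones $\Gamma_k$ with $(1,0,\dots,0) \in \Gamma_k$ and passing to the limit.

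For the case $(1,0,\dots,0) \in \Gamma$, I would solve the Dirichlet problem $f(\lambda(-g_u^{-1}A_{g_u})) = \frac{1}{2}$ with $u = \phi$ on $\partial M$ via the continuity method, with the closing step reducing to uniform a priori $C^{2,\alpha}$ estimates. A two-sided $C^0$ bound should follow by sandwiching $u$ between a suitably scaled (and, if necessary, mollified) multiple of the subsolution $w$ below and a smooth supersolution constructed from a large multiple of a smooth extension of $\phi$ off $\partial M$, exploiting the fact that $(1,0,\dots,0) \in \Gamma$ makes sufficiently large constants admissible. Interior gradient estimates depending on two-sided $C^0$ bounds are supplied by the work of Chu, Li \& Li \cite{CLL23}; boundary gradient estimates, which are the key new technical input, come from Proposition~\ref{123}. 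Interior and boundary second-order estimates then follow from the techniques developed in \cite{GSW11, Yuan22, DN23} adapted to the augmented Hessian form \eqref{104}, and Evans--Krylov together with Schauder theory upgrade these to $C^{2,\alpha}$. Uniqueness in this case follows from the standard comparison principle for strictly elliptic concave operators.

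For the general case, I would construct a family of cones $\Gamma_k$ and symmetric concave defining functions $f_k$ satisfying \eqref{21'}--\eqref{25'} with $\Gamma \Subset \Gamma_k$, $(1,0,\dots,0) \in \Gamma_k$, and $(f_k, \Gamma_k) \to (f,\Gamma)$ locally uniformly; the hypothesized viscosity subsolution $g_w$ for $(f,\Gamma)$ then remains a viscosity subsolution for each $(f_k,\Gamma_k)$, possibly with a slightly smaller $\ep$, so the previous step furnishes smooth solutions $u_k$ with $u_k = \phi$ on $\partial M$. The Chu--Li--Li estimate is stable under such perturbations of $(f,\Gamma)$ and yields local interior Lipschitz bounds on $u_k$ uniform in $k$; Proposition~\ref{123}, applied uniformly across the approximating family, provides a uniform Lipschitz bound in a neighbourhood of $\partial M$. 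Extracting a locally uniformly convergent subsequence and invoking stability of viscosity solutions under locally uniform limits identifies the limit $u$ as the desired Lipschitz viscosity solution satisfying $u = \phi$ on $\partial M$.

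The principal obstacle is Proposition~\ref{123}. In the classical Loewner--Nirenberg setting with $u = 0$ on $\partial M$, boundary $C^1$ bounds are tied to the precise blow-up rate of $u$ at $\partial M$, a mechanism unavailable for positive boundary data. One must instead construct new boundary sub- and supersolution barriers compatible with admissibility for cones with $(1,0,\dots,0) \in \partial\Gamma$, and controlling the normal derivative of $u$ at $\partial M$ in this regime (without imposing a lower bound on $\mu_\Gamma^+$) is expected to be the most delicate point.
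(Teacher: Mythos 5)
Your plan follows the paper's route almost exactly: prove the theorem by the continuity method when $(1,0,\dots,0)\in\Gamma$, combining a two-sided $C^0$ bound, the Chu--Li--Li interior gradient estimate, the new boundary gradient estimate (Proposition~\ref{123}), a global second-order estimate (Guan), and Evans--Krylov/Schauder, with uniqueness from the comparison principle; then pass to the limit over cones $\Gamma_k\ni(1,0,\dots,0)$ converging to $\Gamma$ to obtain the Lipschitz viscosity solution when $(1,0,\dots,0)\in\partial\Gamma$. The paper realizes your $\Gamma_k$-approximation concretely via the $\tau$-deformation $(f^\tau,\Gamma^\tau)$ with $\tau<1$, which it also uses as the continuity parameter, starting from $\tau=0$, the prescribed scalar curvature Dirichlet problem (Lemma~\ref{141}). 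You also correctly single out Proposition~\ref{123} as the key new technical input; the paper proves it by constructing a radial barrier metric on a small annulus touching $\partial M$ from outside (Proposition~\ref{4'}), valid for all $\mu_\Gamma^+$ but relying on the two-sided $C^0$ bound.

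One step in your outline does not work as stated: the $C^0$ bound. Since $A_{g_{cw}} = A_{g_w}$ and $g_{cw}^{-1} = c^2 g_w^{-1}$, homogeneity gives $f(\lambda(-g_{cw}^{-1}A_{g_{cw}})) = c^2 f(\lambda(-g_w^{-1}A_{g_w})) \geq c^2\epsilon$, so for $c$ large the metric $g_{cw}$ satisfies the differential inequality with a right-hand side at least $\tfrac12$ and has larger boundary data; the comparison principle then gives $u\leq cw$, an \emph{upper} bound, not the lower bound you claim. Conversely, a large multiple $K\tilde\phi$ of a generic smooth extension of $\phi$ has $f(\lambda(-g_{K\tilde\phi}^{-1}A_{g_{K\tilde\phi}})) = K^2 f(\lambda(-g_{\tilde\phi}^{-1}A_{g_{\tilde\phi}}))$, and since constant rescaling does not change admissibility of $\tilde\phi$, this need not even have eigenvalues in $\Gamma$; the condition $(1,0,\dots,0)\in\Gamma$ does not rescue this, so it cannot serve as a barrier. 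The paper instead obtains the lower bound from the elementary consequence $f(\lambda)\leq\tfrac1n\sigma_1(\lambda)$ of concavity and normalisation, which forces $R_{g_u}\leq -2n(n-1)$, and then compares $u$ with the solution of the scalar curvature Dirichlet problem from Lemma~\ref{141}. With that correction, your outline matches the paper's proof.
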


The existence of solutions to the fully nonlinear Loewner-Nirenberg problem on general compact Riemannian manifolds with non-empty boundary remains an interesting open problem for values of $\mu_\Gamma^+$ closer to zero when $n\geq 4$. \medskip 

The plan of the paper is as follows. In Section \ref{s2} we prove Theorem \ref{B'}. In Section \ref{s4} we prove Theorems \ref{C} and \ref{F}. In Appendix \ref{appa} we prove Theorem \ref{19}, a result used in Section \ref{s2} concerning $C^{\ell,\sigma}$-compactness of manifolds with non-empty boundary.\bigskip 

\noindent\textbf{Rights retention statement:} For the purpose of Open Access, the authors have applied a CC BY public copyright licence to any Author Accepted Manuscript (AAM) version arising from this submission.

\section{Proof of Theorem \ref{B'}: existence of admissible metrics when $\mu_\Gamma^+>1-\delta$}\label{s2}

The goal of this section is to carry out the procedure outlined in Step 1 of the proof of Theorem \ref{A'} given in the introduction, i.e.~to prove Theorem \ref{B'}. It will be useful to introduce the following quantity:
\begin{align}\label{503}
\mu(M,[g_0]) = \inf\big\{\mu \geq 0 : & \text{ for all }\Gamma \text{ satisfiying \eqref{21'}, \eqref{22'} and }\mu_\Gamma^+ > \mu, \text{ there exists a } \nonumber \\
& \text{ smooth metric }g\in[g_0] \text{ satisfying }\lambda(-g^{-1}A_g)\in\Gamma \text{ on }M\big\}. 
\end{align}
It is then clear that Theorem \ref{B'} is equivalent to: 

\begin{customthm}{\ref{B'}$'$}\label{B''}
	\textit{Let $n\geq 3$, $R_0, S_0 \geq 0$, $i_0, d_0>0$ and $\sigma\in(0,1)$. Then there exists a constant $\delta = \delta(n,R_0, i_0, S_0, d_0, \sigma)>0$ such that $\mu(M,[g_0]) \leq 1-\delta$ whenever $(M,g_0)\in\mathcal{M}_\sigma^n(R_0, i_0, S_0, d_0)$.}
\end{customthm}

\subsection{A key lemma and admissible metrics when $\mu_\Gamma^+ \geq 1$}

In this subsection, we prove that $\mu(M,[g]) \leq 1 -\eta<1$ for an arbitrary smooth compact Riemannian manifold $(M,g)$ with non-empty boundary, where $\eta>0$ depends on an upper bound for the Schouten tensor of $g$ and the $C^1$ norm of the metric and inverse metric components with respect to a given open covering of charts on $M$. This is the content of the following lemma, which is a key tool in the proof of Theorem \ref{B'}.

\begin{lem}\label{8}
	Let $M$ be a smooth compact manifold of dimension $n\geq 3$ with smooth non-empty boundary $\partial M$ and let $\mathcal{U}$ be any smooth finite open covering of charts on $M$. Then for any $C_1, C_2 > 0$, there exists $\eta = \eta(M, \mathcal{U}, C_1, C_2)>0$ such that if $g$ is any Riemannian metric on $M$ with components $g_{ab}$ in local coordinates determined by $\mathcal{U}$ satisfying 
	\begin{align}\label{10}
 |g_{ab}| +  |g^{ab}| + |\partial_a g_{bc}|  \leq C_1
	\end{align}
	and 
	\begin{align}\label{10'}
	 A_g \leq C_2 g,
	\end{align}
	then $\mu(M,[g])\leq 1-\eta$.
\end{lem}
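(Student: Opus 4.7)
My plan is to adapt Yuan's auxiliary-function construction \cite{Yuan24} in a quantitative way, using the one-sided Schouten bound \eqref{10'} to eliminate the hard-to-control second-derivative terms of $g$. Writing $\tilde g = u^{-2} g$ for $u > 0$ smooth, the conformal transformation formula \eqref{104} gives
\begin{align*}
-\tilde g^{-1} A_{\tilde g} \;=\; -u\, g^{-1} \nabla_g^2 u \,+\, \tfrac{1}{2} |du|_g^2\, I \,-\, u^2 g^{-1} A_g.
\end{align*}
Since $A_g \le C_2 g$, we have $-u^2 g^{-1} A_g \ge -C_2 u^2 I$ as $g$-symmetric endomorphisms. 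By Weyl's monotonicity of ordered eigenvalues, together with the closure property $\Gamma + \overline{\Gamma_n^+} \subseteq \Gamma$ (valid for every open convex cone $\Gamma \supseteq \Gamma_n^+$) and the permutation symmetry of $\Gamma$, it is enough to produce $u > 0$ such that the eigenvalue tuple of
\begin{align*}
L[u] \;:=\; -u\, g^{-1} \nabla_g^2 u \,+\, \tfrac{1}{2} |du|_g^2\, I \,-\, C_2 u^2\, I
\end{align*}
lies in $\Gamma$ on $M$. The crucial feature of this reduction is that $L[u]$ depends on $g$ only through $g_{ab}$, $g^{ab}$ and the Christoffel symbols $\Gamma^c_{ab}$ of $g$, all of which are controlled by $C_1$ via \eqref{10}; the unknown second derivatives of $g$ concealed in $A_g$ have been replaced by the single constant $C_2$.

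I would then make the ansatz $u = e^{\tau\phi}$ for a smooth auxiliary function $\phi$ on $M$ (chosen after Yuan \cite{Yuan24}, e.g.\ a boundary-adapted Morse function with no interior local minima) and a large parameter $\tau > 0$. Discarding the common positive factor $u^2$, the condition $\lambda(L[u]) \in \Gamma$ becomes $\lambda(N_{\tau,\phi}) \in \Gamma$, where
\begin{align*}
N_{\tau,\phi} \;=\; \tau^2 \bigl( \tfrac{1}{2} |\nabla\phi|_g^2\, I - g^{-1}(d\phi \otimes d\phi) \bigr) \,-\, \tau\, g^{-1} \nabla_g^2 \phi \,-\, C_2 I.
\end{align*}
At points where $\nabla\phi \ne 0$, the $O(\tau^2)$ leading term has eigenvalues $\tfrac{\tau^2}{2}|\nabla\phi|_g^2(-1,1,\dots,1)$, sitting exactly on $\partial\Gamma$ for every cone with $\mu_\Gamma^+ = 1$; the lower-order term $-\tau g^{-1}\nabla_g^2 \phi - C_2 I$ must then push the tuple strictly inside. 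Following the refinement of Yuan's argument flagged in the introduction, a judicious choice of $\phi$ should yield a normalized eigenvalue tuple close to $(-1, 1+\delta, \dots, 1+\delta)$ for some explicit $\delta > 0$ depending only on $(C_1, C_2, M, \mathcal{U}, \phi, \tau)$. Since $(-1, 1+\delta, \dots, 1+\delta) = (1+\delta)\bigl(-1/(1+\delta), 1, \dots, 1\bigr)$ lies in the interior of every admissible cone satisfying $\mu_\Gamma^+ > 1/(1+\delta) = 1 - \delta/(1+\delta)$, this produces the desired $\eta = \delta/(1+\delta) > 0$.

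The main obstacle will be the treatment of the critical points of $\phi$: there the $O(\tau^2)$ term degenerates and only the Hessian term $-\tau g^{-1} \nabla_g^2 \phi$ dominates, with sign dictated by the Morse index. An interior local minimum is fatal --- the Hessian is positive definite there, so $N_{\tau,\phi}$ becomes negative definite and leaves every admissible cone --- so one must exploit the non-emptiness of $\partial M$ to construct $\phi$ without such critical points (e.g.\ via a boundary-defining Morse function), or combine several local auxiliary functions via a partition of unity subordinate to $\mathcal{U}$, and then verify the cone condition separately at each remaining critical point. Uniformity of the interiority constant $\delta$ in the single constants $C_1$ and $C_2$ --- rather than in higher-order norms of $g$, which are not controlled by \eqref{10}--\eqref{10'} --- is precisely what makes this quantitative lemma the right input into the subsequent $C^{\ell,\sigma}$-compactness step used to prove Theorem~\ref{B''}.
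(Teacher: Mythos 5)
Your general reduction --- using \eqref{10'} to replace the Schouten tensor by the scalar $-C_2$ and using \eqref{10} to control the Christoffel symbols --- is exactly the paper's, but the conformal ansatz $u = e^{\tau\phi}$ (conformal factor $e^{-2\tau\phi}$) is the wrong one, and the gap sits precisely where you appeal to a ``judicious choice of $\phi$.'' With a single exponential, the leading $O(\tau^2)$ part of $N_{\tau,\phi}$ has eigenvalue tuple $\frac{\tau^2}{2}|\nabla\phi|_g^2(-1,1,\dots,1)$, lying exactly on $\partial\Gamma$ when $\mu_\Gamma^+=1$, while the subleading Hessian term $-\tau g^{-1}\nabla_g^2\phi$ is a general symmetric matrix with no definite sign and no rank-one alignment with $d\phi\otimes d\phi$. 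Under \eqref{10} alone (and for a fixed $\phi$), the eigenvalues of $-\tau g^{-1}\nabla_g^2\phi - C_2\operatorname{Id}$ are bounded only in absolute value, say by $\tau C_3 + C_2$, and Weyl's inequalities then give, for the ratio of the smallest to the second smallest eigenvalue of $N_{\tau,\phi}$,
\begin{align*}
\frac{\lambda_{\min}(N_{\tau,\phi})}{\lambda_2(N_{\tau,\phi})} \;\ge\; -\,\frac{\tfrac{\tau^2}{2}|\nabla\phi|_g^2 + \tau C_3 + C_2}{\tfrac{\tau^2}{2}|\nabla\phi|_g^2 - \tau C_3 - C_2},
\end{align*}
which is always strictly less than $-1$ and approaches $-1$ from below as $\tau\to\infty$. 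This is insufficient to place $\lambda(N_{\tau,\phi})$ in any cone with $\mu_\Gamma^+\le 1$, so the claimed uniform $\delta>0$ never materialises.

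The paper (following Yuan, but retaining a term Yuan discarded) uses the \emph{double} exponential $g^N = e^{2e^{Nv}}g$. Its point is that $\nabla_g^2(e^{Nv}) = Ne^{Nv}\nabla_g^2 v + N^2 e^{Nv}\,dv\otimes dv$ contributes a rank-one piece $N^2 e^{Nv}\,dv\otimes dv$ aligned with $d(e^{Nv})\otimes d(e^{Nv}) = N^2 e^{2Nv}\,dv\otimes dv$; in \eqref{415} these two rank-one pieces combine into $N^2e^{Nv}(1-e^{Nv})g^{-1}(dv\otimes dv)$, and together with the scalar term $\frac12 N^2 e^{2Nv}|dv|_g^2\operatorname{Id}$ this produces the leading eigenvalue tuple $\frac12 N^2 e^{2Nv}|dv|_g^2\bigl(-1+\tfrac{2}{e^{Nv}},1,\dots,1\bigr)$ of \eqref{3}. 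The extra $\frac{2}{e^{Nv}}$ in the first slot --- the ``good term dropped in Yuan's argument'' flagged in the introduction --- is a strictly positive correction to the boundary ray $(-1,1,\dots,1)$ that survives the $O(Ne^{Nv})$ Hessian of $v$ and the $O(1)$ Schouten contribution once $N$ is fixed large, and it yields the explicit $\eta = e^{-N\max_M v}$. Your single-exponential ansatz has no analogue of this correction, and you supply no construction of $\phi$ for which the $O(\tau)$ Hessian term would produce a uniform favourable eigenvalue split; that is the missing idea.
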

 
Before giving the proof of Lemma \ref{8}, we note that it has the following immediate consequence: 
\begin{cor}
	Let $(M,g_0)$ be a smooth compact Riemannian manifold of dimension $n\geq 3$ with non-empty boundary. Then $\mu(M,[g_0])<1$. In particular, if $\Gamma$ satisfies \eqref{21'}, \eqref{22'} and $\mu_\Gamma^+ \geq 1$, then there exists a smooth metric $g\in[g_0]$ such that $\lambda(-g^{-1}A_g)\in\Gamma$ on $M$. 
\end{cor}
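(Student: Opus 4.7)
The plan is to construct a smooth conformal factor $w = e^{-v}$ on $M$ so that $g_w = w^{-2}g$ satisfies $\lambda(-g_w^{-1}A_{g_w}) \in \Gamma$ for every cone $\Gamma$ satisfying \eqref{21'}--\eqref{22'} with $\mu_\Gamma^+ > 1-\eta$, for some $\eta = \eta(M, \mathcal{U}, C_1, C_2) > 0$. By definition \eqref{503}, this yields $\mu(M,[g]) \leq 1-\eta$.

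Following \cite{Yuan24}, I would take $v = \Psi(f)$, where $f: M \to \mathbb{R}$ is a Morse function with no critical points on $\partial M$ and $\Psi$ is the smooth convex function $\Psi(s) = -\log(c_0 - s)$ with $c_0 > \sup_M f$. The key feature of this $\Psi$ is the ODE $\Psi''(s) = (\Psi'(s))^2$. Substituting $v = \Psi(f)$ into \eqref{104}, and using $\nabla^2 v = \Psi''(f) df \otimes df + \Psi'(f) \nabla_g^2 f$ together with $|dv|^2 = (\Psi'(f))^2 |df|^2$, the ODE produces a miraculous cancellation of the $df \otimes df$ contributions, yielding
\[
-g^{-1} A_{g_w} = \Psi'(f) g^{-1} \nabla_g^2 f + \tfrac{1}{2}(\Psi'(f))^2 |df|_g^2\, I - g^{-1} A_g.
\]
Since $g_w^{-1} = w^2 g^{-1}$, the eigenvalues of $g_w^{-1}A_{g_w}$ differ from those of $g^{-1}A_{g_w}$ by the positive factor $w^2$, so cone membership is unchanged. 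Thus the eigenvalues of $B := -g^{-1}A_{g_w}$ are those of the bounded symmetric endomorphism $\Psi'(f) g^{-1}\nabla_g^2 f - g^{-1}A_g$, shifted by the non-negative scalar $\tfrac{1}{2}(\Psi'(f))^2 |df|^2$.

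On the open set $\{|df| > \delta_0\}$, for some $\delta_0 > 0$ chosen below, the scalar shift dominates the bounded endomorphism (whose operator norm is controlled by $\|\Psi'\|_{C^0}$, $\|\nabla_g^2 f\|_{C^0(\mathcal{U})}$, and $C_2$), so $B$ is positive definite with eigenvalue vector deep inside $\Gamma_n^+ \subset \Gamma$. At a critical point of $f$ the scalar shift vanishes and $B$ reduces to $\Psi'(f)g^{-1}\nabla_g^2 f - g^{-1}A_g$; at an index-zero critical point $\nabla_g^2 f$ is positive definite, and making $c_0$ suitably close to $\sup f$ renders $\Psi'(f)$ large enough to force $B > 0$ there. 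At higher-index critical points (which Morse inequalities force to exist when $M$ has nontrivial topology), I would adopt Yuan's local modification of $v$: add a compactly supported, strongly convex perturbation in a small neighbourhood of each saddle or maximum so that $\nabla_g^2 v$ becomes positive definite there, taking the perturbation small enough not to upset the analysis on the complement.

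The main obstacle is the transition region $\{0 < |df| < \delta_0\}$ together with the balls where the local modifications are glued in: there the scalar shift is small and the endomorphism $\Psi'(f)\nabla_g^2 f - A_g$ is no longer necessarily positive, so $B$ need not be in $\Gamma_n^+$ — but one can verify, by inspecting the worst-direction eigenvalue, that it still lies in every $\Gamma$ with $\mu_\Gamma^+ > 1-\eta$ for some explicit $\eta$ determined by the data. Tracking the constants throughout — derivative bounds on $\Psi$ via $c_0$ and $\sup_M |f|$; bounds on $f$ and $\nabla_g^2 f$ via the chart cover $\mathcal{U}$ and the Morse normal forms; Christoffel symbols via \eqref{10}; and $g^{-1}A_g$ via \eqref{10'} — yields a uniform $\eta = \eta(M, \mathcal{U}, C_1, C_2) > 0$. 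The authors' key refinement over \cite{Yuan24} is precisely this quantitative control, which extends the admissible range from $\mu_\Gamma^+ \geq 1$ down to $\mu_\Gamma^+ > 1-\eta$.
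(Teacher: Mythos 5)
Your proposal takes a genuinely different route from the paper's, and it leaves a substantive gap that you yourself flag as ``the main obstacle'' but do not close.

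The paper's proof of the corollary (via Lemma \ref{8}) uses neither a Morse function nor the relation $\Psi''=(\Psi')^2$. It instead fixes a smooth $v\geq 1$ on $M$ with \emph{no critical points at all} --- such a function exists because $M$ is compact with non-empty boundary --- and takes the conformal factor to be $e^{-e^{Nv}}$ for $N$ large, i.e.\ $\Psi(s)=e^{Ns}$. With that choice the $dv\otimes dv$ contribution does \emph{not} cancel: combining $N^2 e^{Nv}(1-e^{Nv})\,g^{-1}(dv\otimes dv)$ with the scalar term $\tfrac12 N^2 e^{2Nv}|dv|_g^2\operatorname{Id}$ gives eigenvalues $\tfrac12 N^2 e^{2Nv}|dv|_g^2\bigl(-1+\tfrac{2}{e^{Nv}},\,1,\dots,1\bigr)$, and the strictly positive, globally bounded-below correction $\tfrac{2}{e^{Nv}}\geq \tfrac{2}{e^{N\max_M v}}$ in the first slot is exactly what pushes the threshold strictly below $1$, yielding $\mu(M,[g])\leq 1-\eta$. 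Because $|dv|$ is bounded away from zero on all of the compact $M$, the Hessian and Schouten terms become genuinely lower order once divided by $\tfrac12 N^2 e^{2Nv}|dv|_g^2$, and no transition region arises anywhere.

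Your choice $\Psi(s)=-\log(c_0-s)$, by contrast, kills the $df\otimes df$ term, so away from the critical set of $f$ the eigenvalue vector of $-g^{-1}A_{g_w}$ is near a positive multiple of $(1,\dots,1)$ --- deep in $\Gamma_n^+$, which is stronger than needed but carries no $(-\mu,1,\dots,1)$-type structure to detect the dependence on $\mu_\Gamma^+$. At a critical point of $f$ of index $\geq 1$ the scalar shift vanishes and $\Psi'(f)\nabla^2_g f - A_g$ has a negative eigenvalue with nothing to compensate it; the compactly supported convex perturbation you propose destroys the relation $\Psi''=(\Psi')^2$, reintroducing a $df\otimes df$ term of unclear sign, and the ``inspection of the worst-direction eigenvalue'' on the transition annuli is never actually performed. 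That missing step is precisely where a uniform $\eta=\eta(M,\mathcal{U},C_1,C_2)>0$ would have to come from, and as written the argument only yields membership in $\Gamma_n^+$ where $|df|$ is bounded below. The paper sidesteps this problem by using a critical-point-free $v$, and its quantitative improvement over Yuan's argument comes from \emph{retaining} the gradient-squared term, not cancelling it.
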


We recall that for $\mu_\Gamma^+>1$, the last assertion in the corollary was obtained independently by the authors \cite{DN23} and Yuan \cite{Yuan24}. Our proof of Lemma \ref{8} is inspired by the approach of Yuan. In the course of the proof, we make use of a good term that was dropped in Yuan's argument -- see the sentence following \eqref{4} below.

\begin{proof}[Proof of Lemma \ref{8}]
We start by fixing a smooth function $v\geq 1$ on $M$ with no critical points, so that $dv\not=0$ on $M$ -- note that such a function exists since $M$ is a smooth compact manifold with non-empty boundary. In what follows, our constants may depend on $v$, although such dependence is implicit through dependence on $M$. From this point onwards, all computations are carried out in a single chart where $\delta$ is used as a local reference metric, and $|X|_\delta$ denotes the norm of a vector field $X$ with respect to $\delta$. Note that by \eqref{10}, there exists a constant $C_0\geq 1$ depending only on $n$ and $C_1$ such that 
\begin{align}\label{51}
C_0^{-1}(\delta_{ab}) \leq (g_{ab}) \leq C_0 (\delta_{ab})
\end{align}
in the sense of matrices. Note that \eqref{51} is equivalent to $C_0^{-1}|X|_\delta^2 \leq |X|_g^2 \leq C_0|X|_\delta^2$ for all vector fields $X$. Additionally, after enlarging $C_0$ if necessary, the Christoffel symbols $\Gamma_{ab}^c$ of the Levi-Civita connection for $g$ satisfy
\begin{align}\label{418}
|\Gamma_{ab}^c| \leq C_0, 
\end{align}
which follows from \eqref{10} and the fact that $\Gamma_{ab}^c$ can be expressed in terms of the inverse metric components and first order partial derivatives of the metric components. \medskip 

For $N>0$, denote $g ^N = e^{2e^{Nv}}g$. We will show that there exists $N = N(M, \mathcal{U},C_1,C_2)$ such that if $\mu_\Gamma^+ \geq 1-\frac{1}{e^{N\max_M v}}$, then $\lambda(-g^{-1}A_{g^{N}})\in\Gamma$. To this end, we first observe 
\begin{align}\label{415}
-g ^{-1}A_{g ^N} = Ne^{Nv} g ^{-1}\nabla_{g }^2 v + N^2 e^{Nv}(1-e^{Nv}) g ^{-1}(dv\otimes dv) + \frac{1}{2}N^2 e^{2Nv}|dv|_{g }^2 \operatorname{Id} - g ^{-1}A_{g }. 
\end{align}
Now, 
\begin{align}\label{3}
\lambda\bigg(& N^2 e^{Nv}(1-e^{Nv}) g ^{-1}(dv\otimes dv) + \frac{1}{2}N^2 e^{2Nv}|dv|_{g }^2 \operatorname{Id}\bigg) \nonumber \\
& = \frac{1}{2}N^2 e^{2Nv}|dv|_{g }^2\bigg(-1+\frac{2}{e^{Nv}} \, , \, 1 \, , \,  \dots \, , \, 1\bigg)
\end{align}
and (interpreting inequalities of vectors componentwise)
\begin{align}\label{4}
\lambda(-g ^{-1}A_{g }) & \geq -(C_2,\dots,C_2) \nonumber \\
&  = -\frac{1}{2}N^2 e^{2Nv} |dv|_{g }^2\bigg(\frac{2C_2}{N^2e^{2Nv}|dv|_{g }^2}, \dots, \frac{2C_2}{N^2e^{2Nv}|dv|_{g }^2}\bigg) \nonumber \\
& \geq -\frac{1}{2}N^2 e^{2Nv} |dv|_{g }^2\bigg(\frac{2C_0C_2}{N^2e^{2Nv}|dv|_{\delta}^2}, \dots, \frac{2C_0C_2}{N^2e^{2Nv}|dv|_{\delta}^2}\bigg).
\end{align}
We point out that the term $\frac{2}{e^{Nv}}$ in the first component on the bottom line of \eqref{3} was dropped at this point in Yuan's argument \cite{Yuan24}.\medskip 

We next consider $Ne^{Nv}\lambda(g ^{-1}\nabla_{g }^2 v)$, which is the only remaining term coming from \eqref{415}. Recalling
\begin{align*}
(\nabla_{g }^2 v)_{ab}  = \partial_a \partial_b v - \Gamma_{ab}^c\,\partial_c v
\end{align*}
and \eqref{418}, we see that there exists a constant $C_3=C_3(M, \mathcal{U}, C_1)>0$ such that 
\begin{align}\label{5}
Ne^{Nv}\lambda(g ^{-1}\nabla_{g }^2 v)&  \geq -Ne^{Nv}(C_3,\dots, C_3) \nonumber \\
& = -\frac{1}{2}N^2 e^{2Nv}|dv|_{g }^2\bigg(\frac{2C_3}{Ne^{Nv}|dv|_{g }^2}, \dots, \frac{2C_3}{Ne^{Nv}|dv|_{g }^2}\bigg) \nonumber \\
& \geq -\frac{1}{2}N^2 e^{2Nv}|dv|_{g }^2\bigg(\frac{2C_0C_3}{Ne^{Nv}|dv|_{\delta}^2}, \dots, \frac{2C_0C_3}{Ne^{Nv}|dv|_{\delta}^2}\bigg). 
\end{align}

By \eqref{415}--\eqref{5}, we therefore have
\begin{align*}
\lambda(-g ^{-1}A_{g^N}) \geq \frac{1}{2}N^2 e^{2Nv}|dv|_{g }^2 (\chi_1, \chi_2, \dots, \chi_2)
\end{align*}
where
\begin{align*}
\chi_2 & = 1 - \frac{2C_0C_2}{N^2e^{2Nv}|dv|_{\delta}^2} - \frac{2C_0C_3}{Ne^{Nv}|dv|_{\delta}^2}
\end{align*} 
and
\begin{align*}
\chi_1 & = -1+\frac{2}{e^{Nv}} - \frac{2C_0C_2}{N^2e^{2Nv}|dv|_{\delta}^2} - \frac{2C_0C_3}{Ne^{Nv}|dv|_{\delta}^2} \nonumber \\
& = -\chi_2 + \frac{2}{e^{Nv}} - \frac{4C_0C_2}{N^2e^{2Nv}|dv|_{\delta}^2} - \frac{4C_0C_3}{Ne^{Nv}|dv|_{\delta}^2}.
\end{align*}
Now observe that we may fix $N = N(M, \mathcal{U}, C_1, C_2)$ such that $\frac{1}{2}<\chi_2 <1$ and $\chi_1 > -\chi_2 + \frac{1}{e^{Nv}}$. Therefore, if $\mu_\Gamma^+ \geq 1-\frac{1}{e^{N\max_M v}}$, we have $\chi_1 > -\mu_\Gamma^+ \chi_2$ and hence $\lambda(-g^{-1}A_{g^{N}})\in\Gamma$ on $M$.
\end{proof}

\subsection{Tools concerning $C^{\ell,\sigma}$-compactness of Riemannian manifolds with boundary}\label{42}

In this subsection we introduce two theorems that will be used in the proof of Theorem \ref{B'} alongside Lemma \ref{8}. The first result (Theorem \ref{19}) is a minor variation of a theorem of Anderson et.~al.~\cite{AKKLT04} concerning the harmonic radius under suitable curvature bounds on compact manifolds with non-empty boundary. The second result (Theorem \ref{20}) concerns pre-compactness in the $C^{\ell,\sigma}$-topology.\medskip 

We begin by introducing the space $\mathcal{N}^n(\ell+\sigma,\rho,Q)$ for manifolds with boundary:

\begin{defn}\label{17}
	Let $n\geq 3$, $\ell\in\mathbb{N}\cup\{0\}$, $\sigma\in(0,1)$, $\rho\in(0,\infty)$ and $Q>1$. We define $\mathcal{N}^n(\ell+\sigma,\rho, Q)$ to be the set of $n$-dimensional Riemannian manifolds $(M, g)$ with non-empty boundary $\partial M$, where $M$ is smooth and $g$ is of class $C^{\ell,\sigma}$, such that for each $p\in M$: 
\begin{enumerate}
	\item If $d_g(p,\partial M)\geq \frac{3\rho}{4}$, there exists a neighbourhood $U$ of $p$ contained in the interior of $M$ and a chart $\phi: U\rightarrow \mathbb{R}^n$ with $\phi(U) = B_{\frac{3\rho}{4Q}}(0)$ and $\phi(p)=0$ such that
	\begin{align}\label{14}
	Q^{-2}(\delta_{ij}) \leq (g_{ij}) \leq Q^2(\delta_{ij}) \,\,\text{ in }U \quad \text{and} \quad \rho^{\ell+\sigma}\sum_{|\beta| = \ell}[\partial^\beta g_{ij}]_{C^{0,\sigma}(U)} \leq  Q-1,
	\end{align}
	where
	\begin{align}
	[\partial^\beta g_{ij}]_{C^{0,\sigma}(U)} = \sup_{x\not=y\in U}\frac{|\partial^\beta g_{ij}(x) - \partial^\beta g_{ij}(y)|}{|x-y|^\sigma}
	\end{align}
	is the $C^{0,\sigma}$-seminorm of $\partial^\beta g_{ij}$ measured with respect to the coordinates induced by $\phi$, and $|x-y|$ is the Euclidean distance between $x$ and $y$. 
	\item If $d_g(p,\partial M)\leq \rho$, there exists a unique $q\in\partial M$ such that $d_g(p,q) = d_g(p,\partial M)$, and there exist a neighbourhood $U$ of $p$ in $M$ and a chart $\phi:U\rightarrow \overline{\mathbb{R}_+^n}$ with $\phi(U) =  B_{2Q\rho}^+(0) \defeq \{x_n\geq 0\}\cap B_{2Q\rho}(0)$, $\phi(\partial M\cap U) = \{x_n=0\}\cap B_{2Q\rho}(0)$ and $\phi(q) = 0$ such that \eqref{14} holds. 
\end{enumerate}
We further define:
\begin{enumerate}[$\bullet$]
	\item $\mathcal{N}^n_{\mathrm{har}}(\ell+\sigma,\rho, Q)\subset \mathcal{N}^n(\ell+\sigma,\rho, Q)$ by imposing the additional restriction that the coordinate functions in the above two statements are harmonic (i.e.~$\Delta_g \phi^i=0$ for each $i$).
	\item $\mathcal{N}^n(\ell+\sigma,\rho,Q,d_0)\subset \mathcal{N}^n(\ell+\sigma,\rho,Q)$ by imposing the additional restriction $\operatorname{diam}(M,g)\leq d_0$.
\end{enumerate} 
\end{defn}

The following theorem concerns the relationship between the spaces $\mathcal{M}^n_\sigma(R_0, i_0, S_0,\infty)$ (see Definition \ref{500} in the introduction) and $\mathcal{N}_{\mathrm{har}}^n(1+\sigma, \varrho, Q)$ for suitable $\varrho$. It roughly corresponds to \cite[Theorem 3.2.1]{AKKLT04} (more precisely, it is the counterpart to  \cite[Equation (3.1.3)]{AKKLT04}), but requires slightly less regularity: 

\begin{thm}\label{19}
	\textit{Let $n\geq 3$. Given $R_0, S_0 \geq 0$, $i_0>0$, $\sigma\in(0,1)$ and $Q>1$, there exists $\varrho = \varrho(n,R_0, i_0, S_0, Q, \sigma)>0$ such that}
	\begin{align*}
	\mathcal{M}^n_\sigma(R_0, i_0, S_0,\infty) \subset \mathcal{N}_{\mathrm{har}}^n(1+\sigma, \varrho, Q). 
	\end{align*}
\end{thm}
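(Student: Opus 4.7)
The plan is to adapt the argument of Anderson, Katsuda, Kurylev, Lassas, and Taylor \cite{AKKLT04} producing harmonic coordinates of uniform size on compact manifolds with boundary, with the key modification that we only assume $C^{0,\sigma}$ control on the mean curvature rather than $C^{1,\sigma}$ control on the full second fundamental form. The argument splits naturally into an interior case and a boundary case, with the harmonic radius output ultimately dictated by Schauder/Calderón--Zygmund estimates applied to the quasilinear system $\Delta_g g_{ij} + Q_{ij}(g,\partial g) = -2\,\mathrm{Ric}_{ij}$ satisfied by $g_{ij}$ in harmonic charts.

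For $p \in M$ with $\mathrm{d}_g(p,\partial M) \geq 3\varrho/4$, I would appeal to the classical Jost--Karcher and Anderson interior harmonic radius estimate: the bounds $|\mathrm{Ric}_{(M,g)}|\leq R_0$ and $i_{(M,g)}\geq i_0$ produce a uniform lower bound on the $C^{1,\sigma}$-harmonic radius depending only on $n,R_0,i_0,Q,\sigma$, and with $\varrho$ chosen accordingly one obtains a harmonic chart around $p$ containing $B_{3\varrho/(4Q)}(0)$ and satisfying \eqref{14}. For $p$ with $\mathrm{d}_g(p,\partial M)\leq \varrho$, taking $\varrho \ll i_0$, the boundary injectivity radius bound $i^b_{(M,g)}\geq i_0$ singles out a unique foot $q\in\partial M$ and makes Fermi coordinates $(x^a,x^n)$ well-defined on a half-ball of $g$-radius $\sim i_0$ around $q$, in which $g=(dx^n)^2+g_{ab}(x)\,dx^a dx^b$. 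Applying the interior estimate to $(\partial M, g|_{\partial M})$ (legitimate thanks to $|\mathrm{Ric}_{(\partial M,g)}|\leq R_0$ and $i_{(\partial M,g)}\geq i_0$) I refine $x^a$ into boundary-harmonic coordinates, so that $g_{ab}|_{\partial M}$ is uniformly $C^{1,\sigma}$. I would then promote this to an $M$-harmonic chart by solving on a half-ball $U$ the Dirichlet system
\begin{align*}
\Delta_g y^k = 0 \text{ in } U, \qquad y^k|_{\partial M} = \hat y^k, \quad k=1,\dots,n,
\end{align*}
with $\hat y^a = x^a$ and $\hat y^n \equiv 0$. A standard boundary Schauder argument, combined with the $C^{1,\sigma}$ control on the initial Fermi/boundary-harmonic chart, shows that on a slightly smaller half-ball $(y^1,\dots,y^n)$ is a diffeomorphism onto a domain of the form required in Definition \ref{17}.

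The analytical core is to establish the quantitative $C^{1,\sigma}$ bound for $g_{ij}$ up to $\{y^n=0\}$ using only $\|H\|_{C^{0,\sigma}}\leq S_0$. In harmonic coordinates, $g_{ij}$ solves the quasilinear elliptic system $\Delta_g g_{ij} + Q_{ij}(g,\partial g) = -2\,\mathrm{Ric}_{ij}$ with $L^\infty$ right-hand side, and the harmonicity identities $g^{ij}\Gamma^k_{ij}=0$ combined with the boundary flatness $y^n|_{\partial M}=0$ translate into boundary constraints on $\partial_n g_{ij}$ whose relevant scalar combination is precisely the mean curvature $H$. Consequently $g_{ij}$ satisfies a boundary value problem with $L^\infty$ interior data and $C^{0,\sigma}$ boundary data, and Calderón--Zygmund plus Schauder estimates up to the boundary produce the required $C^{1,\sigma}$ bound. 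The quantitative dependence $\varrho=\varrho(n,R_0,i_0,S_0,Q,\sigma)$ then follows either by tracking constants through these estimates directly, or by a compactness/contradiction argument: rescaling any hypothetical counterexample sequence $(M_k,g_k,p_k)$ with $\varrho_k\to 0$ and extracting a $C^{1,\alpha}$-limit ($\alpha<\sigma$) yields a Euclidean half-space, on which the harmonic chart estimates hold trivially, contradicting the assumed failure at finite stage. The main obstacle is precisely this last step: verifying that the scalar (traced) boundary condition supplied by the $C^{0,\sigma}$ mean curvature bound is sufficient to close the boundary Schauder argument for the full metric tensor, with the remaining components of the second fundamental form controlled \emph{a posteriori} by the elliptic system and the two Ricci bounds rather than assumed a priori as in \cite{AKKLT04}.
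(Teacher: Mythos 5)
Your proposal follows essentially the same strategy as the paper's proof (which adapts \cite{AKKLT04}): split into interior and boundary cases, construct boundary-harmonic coordinates on $\partial M$ using the boundary Ricci and injectivity radius bounds to control the induced metric in $C^{1,\sigma}$, extend these to $M$-harmonic coordinates by solving a Dirichlet problem, and analyze the quasilinear system $\Delta_g g_{ij} + Q_{ij}(g,\partial g) = -2\operatorname{Ric}_{ij}$ with boundary conditions driven by the mean curvature. The paper commits to the compactness/contradiction route you offer as an alternative — rescaling a hypothetical counterexample $(M_k,\widetilde g_k)$ by $\varepsilon_k^{-2}$ with $\varepsilon_k = r_{\mathrm{har}}^{1+\sigma}(M_k,\widetilde g_k,Q)\to 0$ so that the rescaled harmonic radius is $1$ while the Ricci and mean curvature bounds decay to $0$ and the injectivity radii blow up, then extracting a pointed $C^{1,\sigma'}$-limit which is either $\mathbb{R}^n$ or $\overline{\mathbb{R}^n_+}$ — and this is the right choice: tracking constants directly through a mixed Dirichlet/Neumann Schauder estimate for the quasilinear system would be substantially harder than the blow-up argument.

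The genuine gap is that the ``main obstacle'' you flag at the end is exactly where the proof does its real work, and you leave it open. It is not the case that the remaining second fundamental form components are recovered ``a posteriori by the elliptic system''; instead the $C^{1,\sigma}$ bound on $g^{ij}$ in harmonic coordinates up to $\{x^n=0\}$ splits into two genuinely different pieces. First, the components $g^{ln}$ ($1\leq l\leq n$) satisfy Neumann conditions on $\{x^n=0\}$ in which only $H$, not the full second fundamental form, appears:
\begin{align*}
\nabla_N g^{nn} = -2(n-1)H\,g^{nn}, \qquad \nabla_N g^{ln} = -(n-1)H\,g^{ln} + \tfrac{1}{2\sqrt{g^{nn}}}\,g^{li}\partial_i g^{nn} \quad (l<n),
\end{align*}
and one bootstraps these in order, $g^{nn}$ first and then $g^{ln}$ for $l<n$. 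Second, the tangential components $g^{ij}$ ($i,j\leq n-1$) are controlled on the boundary not by any Neumann condition at all but via the decomposition
\begin{align*}
g^{ij} = g(\nabla_T u^i,\nabla_T u^j) + \frac{g^{ni}g^{nj}}{g^{nn}},
\end{align*}
where the first summand is the induced metric of $\partial M$ in the boundary-harmonic coordinates (hence $C^{1,\sigma}$ by the interior estimate applied to $(\partial M,g|_{\partial M})$) and the second is controlled by the first step; interior elliptic regularity on the half-ball then yields the desired $C^{1,\sigma}$ estimate for $g^{ij}$. Your outline assembles the right ingredients (boundary-harmonic coordinates, Dirichlet extension, Neumann conditions from $H$) but without this two-step bootstrap the argument does not close, and this is precisely the point at which the weakened hypothesis (only $\|H\|_{C^{0,\sigma}}$ rather than $C^{1,\sigma}$ control of the full second fundamental form) needs to be exploited.
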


The only difference between Theorem \ref{19} and the analogous result \cite[Equation (3.1.3)]{AKKLT04} is that we assume a H\"older bound on the mean curvature rather than a Lipschitz bound -- as a consequence, we obtain a slightly weaker compactness statement which suffices for our purposes. The proof of Theorem \ref{19} is along the lines of \cite{AKKLT04}, and is given in Appendix \ref{appa} rather than the main body of the paper to improve the exposition. \medskip

The next result needed in the proof of Theorem \ref{B''} concerns the following notion of $C^{\ell,\sigma}$-convergence: 
\begin{defn}\label{401}
	A sequence of Riemannian manifolds $\{(M_k,g_k)\}$ converges in the $C^{\ell,\sigma}$-topology to $(M,g)$ if $M$ is a smooth manifold, $g$ is a $C^{\ell,\sigma}$ Riemannian metric on $M$, and there exist a sequence of diffeomorphisms $F_k:M\rightarrow M_k$ for $k$ sufficiently large and a locally finite collection of charts on $M$ with respect to which $(F_k^*g_k)_{ij}\rightarrow g_{ij}$ in the $C^{\ell,\sigma}$-topology. 
\end{defn}

\begin{thm}\label{20}
	Let $n\geq 3$, $\ell\in\mathbb{N}\cup\{0\}$, $0<\sigma'<\sigma<1$, $\rho\in(0,\infty)$ and $Q>1$. Then for each $d_0>0$, $\mathcal{N}^n(\ell+\sigma,\rho,Q,d_0)$ is precompact in the $C^{\ell,\sigma'}$-topology. 
\end{thm}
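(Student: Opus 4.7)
The plan is to follow the classical Cheeger-type compactness argument adapted to the boundary setting of Definition \ref{17}. Take a sequence $\{(M_k, g_k)\}$ in $\mathcal{N}^n(\ell+\sigma, \rho, Q, d_0)$. First, I would use the bi-Lipschitz bound in \eqref{14} and the diameter bound $\operatorname{diam}(M_k, g_k) \leq d_0$ to cover each $M_k$ by a uniformly bounded number of the interior and boundary charts from Definition \ref{17}. Specifically, a maximal $r$-separated net in $(M_k, g_k)$ with $r$ chosen in terms of $\rho$ and $Q$ has cardinality at most $N = N(n, \rho, Q, d_0)$, and the charts centred at these net points cover $M_k$. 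Passing to a subsequence, I may assume the number of charts equals a fixed $N$ and the combinatorial intersection pattern $\{(\alpha, \beta) : U_k^\alpha \cap U_k^\beta \neq \emptyset\}$ is independent of $k$.

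Next, I would extract $C^{\ell,\sigma'}$-limits of the metric components in each chart. On the fixed Euclidean model $V^\alpha$ (a ball or half-ball) which is the image of the $\alpha$-th chart $\phi_k^\alpha$, the pulled-back metric components $(g_k)_{ij}^\alpha \defeq ((\phi_k^\alpha)^{-1})^*g_k$ are uniformly bounded in $C^{\ell,\sigma}(V^\alpha)$ by \eqref{14}. The compact embedding $C^{\ell,\sigma}(V^\alpha) \hookrightarrow C^{\ell,\sigma'}(V^\alpha)$ for $\sigma' < \sigma$ (Arzel\`a-Ascoli) then yields, via a diagonal extraction across the $N$ charts, convergence $(g_k)_{ij}^\alpha \to g_{ij}^{\alpha,\infty}$ in $C^{\ell,\sigma'}(V^\alpha)$. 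These limits inherit the bi-Lipschitz bounds in \eqref{14} and so each defines a non-degenerate $C^{\ell,\sigma'}$ Riemannian metric on $V^\alpha$.

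The remaining step is to patch the local limits into a global Riemannian manifold. The transition functions $\tau_k^{\alpha\beta} = \phi_k^\alpha \circ (\phi_k^\beta)^{-1}$ satisfy the algebraic identity
\begin{equation*}
(g_k)_{ij}^\beta(x) = \partial_i(\tau_k^{\alpha\beta})^a(x) \, \partial_j(\tau_k^{\alpha\beta})^b(x) \, (g_k)_{ab}^\alpha(\tau_k^{\alpha\beta}(x)),
\end{equation*}
from which the bi-Lipschitz bound and the uniform $C^{\ell,\sigma}$ control on both sides yield, by a bootstrap in derivative order, a uniform $C^{\ell+1,\sigma}$ bound on $\tau_k^{\alpha\beta}$ on compact subsets of its domain. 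Extracting a further subsequence, $\tau_k^{\alpha\beta} \to \tau^{\alpha\beta}$ in $C^{\ell+1,\sigma'}$, and the limits satisfy the cocycle conditions; they therefore define a smooth manifold $M$ equipped with a $C^{\ell+1,\sigma'}$-atlas, on which the $g_{ij}^{\alpha,\infty}$ glue to a $C^{\ell,\sigma'}$ Riemannian metric $g$. The diffeomorphisms $F_k : M \to M_k$ required by Definition \ref{401} are then constructed by interpolating the chart identifications $\phi_k^\alpha \circ (\phi^\alpha)^{-1}$ via a partition of unity subordinate to the cover of $M$.

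The main obstacle I anticipate is the derivative-order bootstrap for the transition functions, coupled with the assembly of a genuine smooth structure on $M$ that possesses a well-defined smooth boundary. In particular, at each boundary chart one must track that the image of $\partial M_k$ under $\phi_k^\alpha$ remains pinned to $\{x_n = 0\}$ in the limit, so that $M$ inherits a smooth boundary which is diffeomorphic to a subsequential limit of $\partial M_k$. Once the uniform $C^{\ell+1,\sigma'}$ control on transitions and the correct boundary-straightening behaviour are established, the remaining partition-of-unity patching and construction of the $F_k$ are routine and yield the claimed precompactness.
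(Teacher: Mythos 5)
Your proposal reconstructs the standard Cheeger--Gromov compactness argument (uniform covering from the diameter and bi-Lipschitz bounds, Arzel\`a--Ascoli extraction in each chart, transition-function bootstrap, patching via a partition of unity, with boundary-type charts allowed), which is precisely the argument of \cite[Theorem 11.3.6]{Pet16} together with the boundary adaptation from \cite{AKKLT04} to which the paper's proof simply defers. The obstacles you flag at the end --- the regularity bootstrap for the transitions and keeping the boundary-chart images pinned to $\{x_n=0\}$ --- are exactly the points handled in those references, so this is essentially the same approach.
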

\begin{proof}
	A detailed proof in the case of manifolds without boundary may be found in \cite[Theorem 11.3.6]{Pet16}. As observed in \cite{AKKLT04}, the proof extends to the case of non-empty boundary simply by allowing for charts of type 2 as introduced in Definition \ref{17}. 
\end{proof}

\subsection{Proof of Theorem \ref{B'}}

We now give the proof of Theorem \ref{B''} (equivalently Theorem \ref{B'}):

\begin{proof}[Proof of Theorem \ref{B''}]
	Recall the definition of $\mu(M,[g])$ from \eqref{503} and suppose for a contradiction that no such $\delta = \delta(n,R_0, i_0, S_0, d_0, \sigma)>0$ exists. By \cite{DN23, Yuan24}, $\mu(M,[g]) \leq 1$ and hence there exists a sequence of Riemannian manifolds $\{(M_i, g_i)\}\subset \mathcal{M}_\sigma^n(R_0, i_0, S_0, d_0)$ such that $\mu(M_i,[g_i])\rightarrow 1^-$. Let $\varrho = \varrho(n, R_0, i_0, S_0, \frac{3}{2},\sigma)>0$ be as in Theorem \ref{19} with $Q = \frac{3}{2}$. Then $(M_i,g_i)\in\mathcal{N}^n(1+\sigma,\frac{\varrho}{2},\frac{3}{2},d_0)$ for each $i$ and hence, by Theorem \ref{20}, there exists a subsequence (still indexed by $i$), a smooth manifold $M_\infty$ and a $C^{1,\sigma}$ Riemannian metric $g_\infty$ on $M_\infty$ such that $(M_\infty, g_\infty)\in\mathcal{N}^n(1+\sigma,\frac{\varrho}{2},\frac{3}{2},d_0)$ and $(M_i,g_i)\rightarrow (M_\infty, g_\infty)$ in the $C^{1,\sigma'}$-topology for any $\sigma'<\sigma$. In particular, the $M_i$ are eventually diffeomorphic to $M_\infty$, and so we may assume without loss of generality that $M_i = M_\infty$ for each $i$. Then by definition of $C^{1,\sigma'}$ convergence, there exists a finite open covering $\mathcal{U}_\infty$ with respect to which $g_i\rightarrow g_\infty$ and $g_i^{-1} \rightarrow g_\infty^{-1}$ componentwise in $C^{1,\sigma'}$. Moreover, after a refinement of $\mathcal{U}_\infty$ if necessary (using Theorem \ref{19}), the bounds in Definition \ref{17} are satisfied with respect to $\mathcal{U}_\infty$ with $Q=2$ and $g = g_i$ for any $i<\infty$. Therefore the metrics $g_i$ satisfy the bound in \eqref{10} in local coordinates determined by $\mathcal{U}_\infty$ with $C_1$ depending only on $Q=2$. Moreover, since $(M_i, g_i)\in \mathcal{M}_\sigma^n(R_0, i_0, S_0, d_0)$, each $g_i$ also satisfies the bound \eqref{10'} with $C_2$ depending only on $R_0$ and $n$. Therefore $\mu(M_i,[g_i])<1-\eta(M_\infty, \mathcal{U}_\infty, C_1, C_2)$ independently of $i$, where $\eta$ is the constant in Lemma \ref{8}. This contradicts $\mu(M_i,[g_i])\rightarrow 1$. 
\end{proof}

\begin{rmk}\label{501}
	As a consequence of Theorems \ref{19} and \ref{20}, it is easy to see that given  $R_0, S_0 \geq 0$, $i_0, d_0>0$ and $\sigma\in(0,1)$, there are finitely many diffeomorphism types of compact manifold $M$ with non-empty boundary admitting a Riemannian metric $g$ such that $(M,g)\in\mathcal{M}^n_\sigma(R_0, i_0, S_0,d_0)$.
\end{rmk}

\subsection{A counterpart to Theorem \ref{B'} for domains in closed manifolds}\label{43}

The following result is a counterpart to Theorem \ref{B''} (equivalently Theorem \ref{B'}) which is used in the proof of Theorem \ref{H''} (we refer the reader back to Definition \ref{502} for the definition of $\widetilde{\mathcal{M}}^n(R_0, i_0, d_0)$):

\begin{thm}\label{H}
	Let $n\geq 3$, $R_0 \geq 0$ and $i_0, d_0>0$. Suppose $(N,g)\in\widetilde{\mathcal{M}}^n(R_0, i_0, d_0)$ and $\Omega$ is a non-empty open subset of $N$. Then there exists $\ep = \ep(n, N, \Omega, R_0, i_0, d_0)>0$ such that $\mu(N\backslash\Omega,[g|_{N\backslash \Omega}])<1-\ep$. 
\end{thm}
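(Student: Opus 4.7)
The plan is to adapt the explicit conformal rescaling of Lemma \ref{8} to the present setting, using the ambient closed manifold $N$ as the arena on which to construct the auxiliary function. We cannot invoke Lemma \ref{8} as a black box, because $N\backslash\Omega$ need not be a smooth manifold with boundary; instead, we must directly verify that the pointwise computation in its proof remains valid on $N\backslash\Omega$, with all relevant constants controlled in terms of $(n,N,\Omega,R_0,i_0,d_0)$.

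First, I would fix a smooth function $v\colon N\to\mathbb{R}$ with $v\geq 1$ and whose critical set is contained in $\Omega$. Such a $v$ exists because $\Omega$ is open and non-empty: starting from any Morse function on $N$ and pulling back by a self-diffeomorphism of $N$ isotopic to the identity that moves its finitely many critical points into $\Omega$ produces a smooth function whose critical set lies in $\Omega$ (treating connected components separately if $N$ is disconnected, which requires $\Omega$ to meet each component), after which adding a constant ensures $v\geq 1$. Consequently $|dv|_g$ attains a positive minimum on the compact set $N\backslash\Omega$, and all derivatives of $v$ are globally bounded on $N$.

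Next, I would collect the geometric data on $N$. Because $(N,g)\in\widetilde{\mathcal{M}}^n(R_0,i_0,d_0)$, the classical harmonic radius theorem of Anderson provides a finite covering $\mathcal{U}_N$ of harmonic coordinate charts on $N$ in which the metric components satisfy $|g_{ab}|+|g^{ab}|+|\partial_c g_{ab}|\leq C_1(n,R_0,i_0,d_0)$, exactly as in \eqref{10}. The two-sided Ricci bound then yields $A_g\leq C_2(n,R_0)\,g$, matching \eqref{10'}. The $C^2$-norm of $v$ in these charts and the lower bound for $|dv|_g$ on $N\backslash\Omega$ depend only on $(n,N,\Omega,R_0,i_0,d_0)$.

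Finally, I would set $\tilde g_\tau = e^{2e^{\tau v}}g$ for a parameter $\tau>0$ and carry out the computation \eqref{415}--\eqref{5} verbatim. That computation is purely pointwise and requires only that $v$ be smooth with $dv\neq 0$ at the point in question, so it is valid on $\{dv\neq 0\}\supset N\backslash\Omega$. The resulting coefficients $\chi_1,\chi_2$ depend on $C_1$, $C_2$, the $C^2$-norm of $v$ and $\min_{N\backslash\Omega}|dv|_g$, hence on $(n,N,\Omega,R_0,i_0,d_0)$ alone. Choosing $\tau=\tau(n,N,\Omega,R_0,i_0,d_0)$ large enough so that $\tfrac{1}{2}<\chi_2<1$ and $\chi_1>-\chi_2+e^{-\tau v}$ pointwise on $N\backslash\Omega$ yields $\lambda(-g^{-1}A_{\tilde g_\tau})\in\Gamma$ on $N\backslash\Omega$ whenever $\mu_\Gamma^+\geq 1-e^{-\tau\max_N v}$, and setting $\ep=\tfrac{1}{2}e^{-\tau\max_N v}$ completes the proof. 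The only substantive obstacle---the possible non-smoothness of $\partial(N\backslash\Omega)$---is bypassed precisely because the argument is entirely pointwise once the auxiliary function $v$ has been constructed globally on the ambient $N$.
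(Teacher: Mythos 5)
There is a genuine gap in your proposal, and it sits exactly where you assert that ``The $C^2$-norm of $v$ in these charts and the lower bound for $|dv|_g$ on $N\backslash\Omega$ depend only on $(n,N,\Omega,R_0,i_0,d_0)$.'' Your auxiliary function $v$ is fixed once and for all on the smooth manifold $N$, but the charts $\mathcal{U}_N$ produced by Anderson's harmonic radius theorem are $g$-harmonic, hence $g$-dependent. The geometric class $\widetilde{\mathcal{M}}^n(R_0,i_0,d_0)$ is diffeomorphism-invariant: if $(N,g_0)$ belongs to it and $\Phi$ is any diffeomorphism of $N$, then so does $(N,\Phi^*g_0)$, because $\operatorname{Ric}$, injectivity radius and diameter are intrinsic. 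Choosing $\Phi$ to stretch $N$ along $\nabla v$ makes $|dv|_{\Phi^*g_0}$ arbitrarily small at points of $N\backslash\Omega$, and simultaneously the components of $v$ expressed in $\Phi^*g_0$-harmonic coordinates lose any uniform $C^2$ control, since those charts are obtained by pulling back $g_0$-harmonic charts by $\Phi$ and are therefore as distorted relative to $v$ as $\Phi$ is. So the constants $C_3$ and $\min_{N\backslash\Omega}|dv|_\delta$ entering the choice of the conformal parameter $\tau$ do not admit bounds depending only on the stated data, and your choice of $\tau$ cannot be made uniform across the class.

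This is precisely the obstruction the paper circumvents by a compactness argument: one argues by contradiction, takes a sequence $g_i$ with $\mu(N\backslash\Omega,[g_i|_{N\backslash\Omega}])\to 1^-$, applies Anderson-type $C^{1,\sigma'}$-precompactness (Theorems \ref{19'} and \ref{20'}) to extract a convergent subsequence, and only \emph{then}, with a fixed covering $\mathcal{U}_\infty$ adapted to the limit and uniform $C^1$-bounds on the $g_i$ in those charts, invokes the pointwise construction (Lemma \ref{12}, the domain analogue of Lemma \ref{8}). The point is that uniformity of the constants $C_1$, $C_3$ and $\min|dv|_\delta$ is a \emph{consequence} of passing to a subsequence and fixing the charts post-convergence, not something one can read off the harmonic radius bound directly. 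Your explicit rescaling and the choice of $v$ with critical set inside $\Omega$ match the paper exactly, so the computation itself is fine; the missing ingredient is a mechanism for producing a single covering in which all metrics in the class (or at least a contradicting subsequence of them) satisfy the needed bounds simultaneously, which is what the compactness step supplies.
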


\begin{proof}[Proof of Theorem \ref{H''}]
	This follows immediately from Theorem \ref{H} and Theorem \ref{C}. 
\end{proof}

For the proof of Theorem \ref{H} we will need the following variant of Lemma \ref{8} for manifolds with boundary arising as domains inside a closed manifold: 

\begin{lem}\label{12}
	Let $N$ be a smooth closed manifold of dimension $n\geq 3$, $\mathcal{U}$ a smooth finite open covering of charts on $N$ and $\Omega$ a non-empty open subset of $N$. Then for any $C_1, C_2>0$, there exists $\eta = \eta(N, \Omega, \mathcal{U}, C_1, C_2)>0$ such that if $g$ is any metric $N$ satisfying \eqref{10} (with respect to local coordinates determined by $\mathcal{U}$) and \eqref{10'}, then $\mu(N\backslash\Omega,[g|_{N\backslash\Omega}])<1-\eta$.
\end{lem}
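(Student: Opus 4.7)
The plan is to mimic the argument in the proof of Lemma \ref{8}, the only substantive new ingredient being a different construction of the auxiliary function $v$, which now lives on the closed manifold $N$ rather than on a manifold with boundary.

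The first and key step is to construct a smooth function $v: N \to \mathbb{R}$ with $v \geq 1$ and whose critical set is contained in $\Omega$. One concrete recipe: fix any Morse function $f_0$ on $N$ and pull it back by a diffeomorphism $\phi$ of $N$ that carries every critical point of $f_0$ into $\Omega$; such a $\phi$ can be constructed component by component using the isotopy extension theorem, and depends only on $N$ and $\Omega$. After shifting by a constant one obtains the desired $v = v(N,\Omega)$. Since $v$ has no critical points on the compact set $N\setminus\Omega$, in the local coordinates determined by $\mathcal{U}$ there is a constant $c_0 = c_0(N, \Omega, \mathcal{U}) > 0$ such that $|dv|_\delta \geq c_0$ on $N\setminus\Omega$, where $\delta$ is the flat local reference metric.

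Second, with $v$ and this lower bound in hand, the plan is to run the computation in the proof of Lemma \ref{8} verbatim on $M = N\setminus\Omega$ with the conformal metric $g^N = e^{2e^{Nv}} g|_{N\setminus\Omega}$. The bounds \eqref{51}, \eqref{418} and the constant $C_3$ derived from \eqref{10} depend only on $n, \mathcal{U}, C_1$; the curvature bound \eqref{10'} contributes $C_2$; and the function $v$ contributes $\max_N v$ and the lower bound $c_0$ on $|dv|_\delta$. The expressions for $\chi_1, \chi_2$ appearing in the proof of Lemma \ref{8} remain valid on $N\setminus\Omega$ once $|dv|_\delta^2 \geq c_0^2$ is inserted, and one may choose $N = N(N, \Omega, \mathcal{U}, C_1, C_2)$ sufficiently large so that $\tfrac{1}{2} < \chi_2 < 1$ and $\chi_1 > -\chi_2 + e^{-Nv}$ uniformly on $N\setminus\Omega$. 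This yields $\lambda(-g^{-1}A_{g^N}) \in \Gamma$ on $N\setminus\Omega$ whenever $\mu_\Gamma^+ \geq 1 - \eta$, with $\eta := e^{-N\max_N v}$ depending only on $N, \Omega, \mathcal{U}, C_1, C_2$ as required.

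The main obstacle is the construction of $v$. Unlike the situation in Lemma \ref{8}, no smooth function on a closed manifold is everywhere non-critical, so one must exploit the non-empty open set $\Omega$ to park all critical points of $v$ away from $N\setminus\Omega$. Note that for the conclusion to be meaningful one tacitly needs $\Omega$ to meet every connected component of $N$; on a component disjoint from $\Omega$ the closed manifold $N\setminus\Omega$ would contain a component on which no smooth function is non-critical, and the present method could not apply there. Once $v$ is in place, the rest of the argument is a direct transcription of the proof of Lemma \ref{8} and no new analytic estimate is needed.
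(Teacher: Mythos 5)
Your proof is correct and follows essentially the same route as the paper: construct a smooth $v\geq 1$ on $N$ with $dv\neq 0$ on $N\setminus\Omega$, then run the proof of Lemma~\ref{8} verbatim on $N\setminus\Omega$. The paper obtains $v$ by a slightly different recipe (invoking that the compact manifold-with-boundary $N\setminus\Omega$ admits a critical-point-free function and extending it to $N$), but your Morse-function-plus-diffeomorphism construction serves the same purpose, and your caveat that $\Omega$ should meet every component of $N$ is a fair point the paper leaves implicit.
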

\begin{proof}
	We start by fixing a smooth function $w \geq 1$ on $N$ with no critical points in $N\backslash \Omega$, so that $dw\not = 0$ on $N\backslash\Omega$ -- note that such a function exists since $N\backslash\Omega$ is a compact manifold with non-empty boundary. The rest of the proof then proceeds as in the proof of Lemma \ref{8}, taking $v$ therein to be equal to $w|_{N\backslash\Omega}$. 
\end{proof}

We now define $\widetilde{\mathcal{N}}^n(\ell+\sigma,\rho,Q)$, $\widetilde{\mathcal{N}}^n(\ell+\sigma,\rho, Q, d_0)$ and $\widetilde{\mathcal{N}}^n_{\mathrm{har}}(\ell+\sigma,\rho,Q)$ in an analogous manner to the definitions in Section \ref{42}, but in the class of \textit{closed} $n$-dimensional Riemannian manifolds (so that charts of the second type in Definition \ref{17} no longer occur). Then we have the following counterparts to Theorems \ref{19} and \ref{20}, respectively:

\begin{thm}\label{19'}
	Let $n\geq 3$. Given $R_0 \geq 0$, $i_0>0$, $\sigma\in(0,1)$ and $Q>1$, there exists $\varrho = \varrho(n, R_0, i_0, Q)>0$ such that
	\begin{align*}
	\widetilde{\mathcal{M}}^n(R_0, i_0,\infty) \subset \widetilde{\mathcal{N}}_{\mathrm{har}}^n(1+\sigma,\varrho, Q).
	\end{align*}
\end{thm}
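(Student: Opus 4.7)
The plan is to follow a standard blow-up argument in the spirit of Anderson's work on the harmonic radius, a strategy that reduces here to a simplification of Theorem \ref{19} in which no boundary considerations arise. The first step is to fix $n, R_0, i_0, Q$ and $\sigma\in(0,1)$ and assume for contradiction that the statement fails. This produces a sequence $(N_k,g_k)\in\widetilde{\mathcal{M}}^n(R_0,i_0,\infty)$ with points $p_k\in N_k$ at which the $C^{1,\sigma}$-harmonic radius at parameter $Q$, say $r_h(p_k;Q)$, tends to $0$. Compactness of each $N_k$ lets us choose $p_k$ to realise $\min_{N_k} r_h(\cdot;Q)$, so that every $q\in N_k$ satisfies $r_h(q;Q)\geq r_h(p_k;Q)$.

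Next I would rescale to $\tilde g_k = r_h(p_k;Q)^{-2}\,g_k$, so that $(N_k,\tilde g_k)$ has $|\operatorname{Ric}_{\tilde g_k}|\leq R_0\,r_h(p_k;Q)^2\to 0$, injectivity radius at least $i_0/r_h(p_k;Q)\to\infty$, and harmonic radius at least $1$ everywhere with equality at $p_k$ (by the scaling law of $r_h$). This uniform lower bound places the rescaled manifolds in a pointed version of $\widetilde{\mathcal{N}}^n(1+\sigma,1,Q)$, so the closed-manifold analogue of Theorem \ref{20} yields a subsequence converging in the pointed $C^{1,\sigma'}$-topology (for any $\sigma'<\sigma$) to a complete pointed Riemannian manifold $(N_\infty,g_\infty,p_\infty)$ with $g_\infty\in C^{1,\sigma}$, with harmonic radius at $p_\infty$ at most $1$, and with $\operatorname{Ric}_{g_\infty}=0$ distributionally. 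In harmonic coordinates, $\operatorname{Ric}(g)_{ij}=-\tfrac{1}{2}g^{kl}\partial_k\partial_l g_{ij}+Q_{ij}(g,\partial g)$ for a quadratic expression $Q_{ij}$, so $\operatorname{Ric}_{g_\infty}=0$ is a quasilinear elliptic system for $g_\infty$ in such charts, and Schauder bootstrapping gives $g_\infty\in C^\infty$.

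The contradiction then comes from exploiting smoothness of the limit. Because $g_\infty$ is smooth, a harmonic chart centred at $p_\infty$ on a ball of radius slightly greater than $1$ satisfies the bounds of Definition \ref{17} with plenty of slack. Transferring such a chart to $(N_k,\tilde g_k)$ via the diffeomorphisms of the $C^{1,\sigma'}$-convergence and re-solving the Dirichlet problem $\Delta_{\tilde g_k}(\phi_k^i)=0$ with data approximating the limit coordinate functions yields, for $k$ large, a harmonic $\tilde g_k$-chart on a ball of radius strictly greater than $1$ meeting the Definition \ref{17} bounds -- contradicting $r_h(p_k;Q)=1$ in the metric $\tilde g_k$.

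The main obstacle is this last transfer step: the $C^{1,\sigma'}$-convergence is strictly weaker than the $C^{1,\sigma}$-control demanded by Definition \ref{17}, so one must first upgrade the regularity of the convergence using the elliptic equation $\operatorname{Ric}_{g_\infty}=0$, and then apply Schauder estimates to the perturbed harmonic coordinate Dirichlet problem to show that the $C^{0,\sigma}$-seminorm of $\partial\tilde g_k$ in the new chart is quantitatively close to its counterpart on the limit. Because $N$ is closed, no boundary-chart construction, boundary-harmonic-coordinate construction, or corner flattening is required, making this considerably simpler than the proof of Theorem \ref{19} given in Appendix \ref{appa}.
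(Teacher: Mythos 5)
Your approach is essentially the paper's: a blow-up contradiction argument in the style of Anderson, and you correctly observe that the absence of a boundary means the whole proof collapses to Case~1 of Theorem~\ref{19}. Indeed, the paper's proof of Theorem~\ref{19'} is precisely a one-line remark that Case~1 of the proof of Theorem~\ref{19} carries through verbatim, since the bounds on $\operatorname{Ric}_{\partial M}$, $i_{\partial M}$, $i^b$ and $\|H\|_{C^{0,\sigma}}$ are never used there. So re-deriving that case from scratch, as you do, is a legitimate alternative presentation rather than a different method.

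There is, however, a genuine gap in the final step. You assert that ``because $g_\infty$ is smooth, a harmonic chart centred at $p_\infty$ on a ball of radius slightly greater than $1$ satisfies the bounds of Definition \ref{17} with plenty of slack.'' Smoothness alone does not give this: the harmonic radius of a smooth Riemannian manifold at a given point is a positive, finite quantity that depends on the metric, and there is nothing to prevent a complete smooth Ricci-flat limit from having harmonic radius exactly $1$ at $p_\infty$, in which case your argument produces no contradiction. The missing ingredient is the identification of the limit $(N_\infty,g_\infty,p_\infty)$ as \emph{flat Euclidean $\mathbb{R}^n$}. This is exactly what the dichotomy in \cite[Lemma 3.2.2]{AKKLT04} (building on Anderson's 1990 argument) delivers in Case 1: completeness, $\operatorname{Ric}_{g_\infty}=0$, and infinite injectivity radius together force $(N_\infty,g_\infty)\cong(\mathbb{R}^n,\delta)$, via Bishop--Gromov volume comparison in the equality case. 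Once you know the limit is flat, the Cartesian coordinates on $\mathbb{R}^n$ are globally harmonic with arbitrarily good bounds, and your transfer step (solving the Dirichlet problem $\Delta_{\tilde g_k}u_k^\nu=0$ with boundary data $x^\nu$, controlling $u_k-x$ in $C^{2,\sigma'}$, and upgrading to $C^{1,\sigma}$ via the elliptic system for the metric components with decaying Ricci) yields the contradiction $r_{\mathrm{har}}^{1+\sigma}(p_k,\tilde g_k,Q)>1$ for large $k$. Relatedly, your choice of $p_k$ as a \emph{minimizer} of the harmonic radius quietly assumes lower semicontinuity of $r_{\mathrm{har}}^{1+\sigma}(\cdot,g_k,Q)$ on each $N_k$; the paper avoids this by only choosing $p_k$ with $r_{\mathrm{har}}^{1+\sigma}(p_k,g_k,Q)<\tfrac{3}{2}$ after normalising the global harmonic radius to $1$, which is always possible directly from the definition of an infimum.
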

\begin{proof}
	The proof of Theorem \ref{19} applies verbatim here with Case 1 being the only possibility. Indeed, bounds for $\operatorname{Ric}_{(\partial M,g)}, i_{(\partial M,g)}, i_{(M,g)}^b$ and $\|H\|_{C^{0,\sigma}(\partial M,g)}$ are not used in Case 1 in the proof of Theorem \ref{19}. 
\end{proof}

\begin{thm}\label{20'}
	Let $n\geq 3$, $\ell\in\mathbb{N}\cup\{0\}$, $0<\sigma'<\sigma<1$, $\rho\in(0,\infty)$ and $Q>1$. Then for each $d_0>0$, $\widetilde{\mathcal{N}}^n(\ell+\sigma,\rho,Q,d_0)$ is precompact in the $C^{\ell,\sigma'}$-topology. 
\end{thm}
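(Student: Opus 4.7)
The plan is to invoke the established $C^{\ell,\sigma'}$-compactness theory for closed Riemannian manifolds with bounded harmonic radius and diameter, as carried out in detail in \cite[Theorem 11.3.6]{Pet16}. Indeed, the class $\widetilde{\mathcal{N}}^n(\ell+\sigma,\rho,Q,d_0)$ is the restriction of $\mathcal{N}^n(\ell+\sigma,\rho,Q,d_0)$ to closed manifolds, so only charts of type (1) in Definition \ref{17} arise. The argument of Theorem \ref{20} therefore applies and in fact is simpler here since the technical complications introduced by boundary charts do not appear.

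In more detail, given a sequence $\{(M_k,g_k)\}\subset \widetilde{\mathcal{N}}^n(\ell+\sigma,\rho,Q,d_0)$, the first step is to exploit the uniform diameter bound $d_0$ together with the uniform lower bound $\rho$ on the harmonic radius to produce, on each $M_k$, a covering by harmonic charts of uniformly bounded cardinality and uniformly bounded multiplicity. This relies on a standard volume-comparison argument using the two-sided bound $Q^{-2}(\delta_{ij})\leq (g_{ij})\leq Q^2(\delta_{ij})$ in each chart. The second step is to pass to a subsequence (using Arzel\`a--Ascoli applied to the transition maps between these uniformly controlled charts, which have uniform $C^{\ell+1,\sigma}$ bounds by the harmonicity condition and the regularity of the metric) so that the nerve of the covering stabilises and the transition maps converge in $C^{\ell+1,\sigma'}$.

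The third step is to use the limiting transition data to glue the chart domains into a smooth limit manifold $M_\infty$, and to use the uniform $C^{\ell,\sigma}$ bounds on the metric coefficients together with the compact embedding $C^{\ell,\sigma}\hookrightarrow C^{\ell,\sigma'}$ (valid since $\sigma'<\sigma$) to extract a $C^{\ell,\sigma}$ limit metric $g_\infty$ on $M_\infty$. Finally, one constructs diffeomorphisms $F_k:M_\infty \to M_k$ for $k$ large by interpolating between the chart identifications using a smooth partition of unity subordinate to the covering on $M_\infty$, verifying that $F_k^*g_k \to g_\infty$ componentwise in $C^{\ell,\sigma'}$ in the sense of Definition \ref{401}.

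The main obstacle is the partition-of-unity gluing argument producing the diffeomorphisms $F_k$ and verifying $C^{\ell,\sigma'}$ convergence of $F_k^*g_k$ to $g_\infty$; this is the step where care must be taken to check that the construction is globally well-defined and that regularity is preserved under the gluing. However, since this is precisely the argument carried out in \cite[Theorem 11.3.6]{Pet16} for closed manifolds, no new ideas are required, and the statement of Theorem \ref{20'} follows directly.
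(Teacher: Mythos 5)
Your proposal matches the paper's proof, which is just a citation to \cite[Theorem 11.3.6]{Pet16}, and your sketch of Petersen's gluing argument is accurate in outline. One small inaccuracy: the class $\widetilde{\mathcal{N}}^n(\ell+\sigma,\rho,Q,d_0)$ (unlike $\widetilde{\mathcal{N}}^n_{\mathrm{har}}$) does not require harmonic charts, so the $C^{\ell+1,\sigma}$ control on the transition maps should be attributed not to harmonicity but to the regularity-of-isometries argument used by Petersen, which needs only the uniform $C^{\ell,\sigma}$ bounds on the metric components in both overlapping charts.
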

\begin{proof}
	As in the proof of Theorem \ref{20}, we refer to \cite[Theorem 11.3.6]{Pet16}.
\end{proof}

\begin{proof}[Proof of Theorem \ref{H}]
	Suppose for a contradiction that no such $\ep = \ep(n,N,\Omega, R_0, i_0, d_0)>0$ exists, and recall from \cite{DN23,Yuan24} that $\mu(N\backslash\Omega,[g|_{N\backslash\Omega}])\leq 1$. It follows that there exists a sequence of Riemannian metrics $g_i$ with $(N,g_i)\in \widetilde{\mathcal{M}}^n(R_0, i_0, d_0)$ such that $\mu(N\backslash\Omega,[g_i|_{N\backslash\Omega}])\rightarrow 1^-$. Let $\varrho = \varrho(n, R_0, i_0, \frac{3}{2})>0$ be as in Theorem \ref{19'} with $Q=\frac{3}{2}$. Then  $(N,g_i)\in\widetilde{\mathcal{N}}^n(1+\sigma,\frac{\varrho}{2},\frac{3}{2},d_0)$ for each $i$ and hence, by Theorem \ref{20'}, there exists a subsequence (still indexed by $i$) and a $C^{1,\sigma}$ Riemannian metric $g_\infty$ on $N$ such that $(N,g_\infty)\in\in\widetilde{\mathcal{N}}^n(1+\sigma,\frac{\varrho}{2},\frac{3}{2},d_0)$ and $(N,g_i)\rightarrow (N,g_\infty)$ in the $C^{1,\sigma'}$-topology for any $\sigma'<\sigma$. Therefore, there exists a smooth finite open covering $\mathcal{U}_\infty$ of $N$ with respect to which $g_i\rightarrow g_\infty$ and $g_i^{-1}\rightarrow g_\infty^{-1}$ componentwise in $C^{1,\sigma'}$. Moreover, after a refinement of $\mathcal{U}_\infty$ if necessary (using Theorem \ref{19'}), the bounds in \eqref{14} are satisfied with $Q = 2$ and $g = g_i$ for any $i<\infty$. Therefore the metrics $g_i$ satisfy the bound in \eqref{10} in local coordinates determined by $\mathcal{U}_\infty$ with $C_1$ depending only on $Q=2$. Moreover, since $(N,g_i)\in\widetilde{\mathcal{M}}^n(R_0, i_0)$, each $g_i$ also satisfies the bound \eqref{10'} with $C_2$ depending only on $R_0$ and $n$. Therefore $\mu(N,[g_i|_{N\backslash\Omega}]) < 1 - \eta(N,\Omega, \mathcal{U}_\infty, C_1, C_2)$ independently of $i$, where $\eta$ is the constant in Lemma \ref{12}. This contradicts $\mu(N\backslash\Omega,[g_i|_{N\backslash\Omega}])\rightarrow 1$. 
\end{proof}

\section{Proof of Theorems \ref{C} and \ref{F}: solutions from admissible metrics}\label{s4}

\subsection{An equivalent formulation of Theorem \ref{C}}

We first give an equivalent formulation of Theorem \ref{C}, which will be more conducive to our method of proof. Given $(f,\Gamma)$ satisfying \eqref{21'}--\eqref{25'}, a number $\tau\in[0,1]$ and a vector $\lambda\in\mathbb{R}^n$, we define
\begin{align*}
\lambda^\tau \defeq \tau\lambda + (1-\tau)\sigma_1(\lambda)e, \quad f^\tau(\lambda) = \frac{1}{\tau + n(1-\tau)}f(\lambda^\tau), \quad  \Gamma^\tau = \{\lambda:\lambda^\tau\in\Gamma\}. 
\end{align*}
It is then routine to verify that $(f^\tau, \Gamma^\tau)$ also satisfies \eqref{21'}--\eqref{25'}. Moreover, it is known (see \cite[Appendix A]{DN22}) that $\Gamma$ satisfies \eqref{21'}, \eqref{22'} and $(1,0,\dots,0)\in\Gamma$ if and only if there exists a cone $\widetilde{\Gamma}$ satisfying \eqref{21'} and \eqref{22'}, and a number $\tau<1$ such that $\Gamma = (\widetilde{\Gamma})^\tau$. We are therefore led to consider the equation
\begin{align}\label{105'}
\begin{cases}
f^\tau(\lambda(-g_u^{-1}A_{g_u})) = \frac{1}{2}, \quad \lambda(-g_u^{-1}A_{g_u})\in\Gamma^\tau & \text{on }M\backslash \partial M \\
u=0 & \text{on }\partial M.
\end{cases}
\end{align}
An equivalent formulation of Theorem \ref{C} is as follows: 

\begin{customthm}{\ref{C}$'$}\label{C'}
	\textit{Suppose that $(f,\Gamma)$ satisfies \eqref{21'}--\eqref{25'} and $(1,0,\dots,0)\in\partial\Gamma$, and let $(M,g_0)$ be a smooth compact Riemannian manifold of dimension $n\geq 3$ with non-empty smooth boundary $\partial M$. Suppose for some $\tau_0 \leq 1$ and constant $\ep>0$ that there exists a continuous metric $g_w = w^{-2}g_0$ satisfying}
	\begin{align*}
	f^{\tau_0}(\lambda(-g_w^{-1}A_{g_w})) \geq \ep > 0,  \quad \lambda(-g_w^{-1}A_{g_w})\in\Gamma^{\tau_0}
	\end{align*}
	\textit{in the viscosity sense on $M$. Then for each $\tau \leq \tau_0$, \eqref{105'} admits a maximal locally Lipschitz viscosity solution $g_{u_\tau} = u_\tau^{-2}g_0$ satisfying}
	\begin{align*}
	\lim_{\operatorname{d}_{g_0}(x,\partial M)\rightarrow 0} \operatorname{d}_{g_0}(x,\partial M)u_{\tau}^{-1}(x) = 1. 
	\end{align*}
	\textit{Moreover, if $\tau<1$ then $u_\tau$ is smooth and it is the unique solution in the class of continuous viscosity solutions satisfying $u_\tau=0$ on $\partial M$. }
\end{customthm}

\begin{rmk}
	It is known that \eqref{105'} exhibits better ellipticity properties when $\tau<1$, see e.g.~\cite{GV03b, DN22}. From this point of view, \eqref{105'} for $\tau<1$ can be viewed as an elliptic regularisation of \eqref{105'} for $\tau=1$. 
\end{rmk}

\begin{rmk}\label{6}
	It is routine to verify that for $\tau = \frac{n-2}{n-1}$, one has $(\sigma_k^{1/k})^\tau(\lambda(-g^{-1}A_g))  = \frac{1}{n-1}\sigma_k^{1/k}(\lambda(-g^{-1}\operatorname{Ric}_g))$. Therefore, the equations mentioned in the introduction that were considered by Guan \cite{Guan08} and Gursky, Streets \& Warren \cite{GSW11} fall within the scope of \eqref{105}. 
\end{rmk}

Following the outline given in the introduction, in the proof of Theorem \ref{C'} we will first consider the problem \eqref{105'} for $\tau \leq \tau_0$ but with positive constant boundary data $\delta>0$:
\begin{align}\label{105''}
\begin{cases}
f^\tau(\lambda(-g_u^{-1}A_{g_u})) = \frac{1}{2}, \quad \lambda(-g_u^{-1}A_{g_u})\in\Gamma^\tau & \text{on }M\backslash \partial M \\
u=\delta>0 & \text{on }\partial M.
\end{cases}
\end{align}
If $\tau_0<1$, the smooth solution to \eqref{105'} will then be obtained as the limit of solutions to \eqref{105''} as $\delta\rightarrow 0$. If $\tau_0=1$, we obtain smooth solutions for each $\tau<1$ with zero boundary data in the same way, and a locally Lipschitz viscosity solution to \eqref{105'} with $\tau=1$ is then obtained as a (subsequential) limit of these solutions. Although not needed in the proof of Theorem \ref{C'}, it is also of independent interest to establish the existence of a Lipschitz viscosity solution to \eqref{105''} when $\tau=1$. We will therefore prove the following preliminary existence result concerning \eqref{105''}, which is equivalent to Theorem \ref{F} (note that although \eqref{105''} concerns only constant boundary data, a conformal change of background metric can be used to transform any Dirichlet boundary value problem with positive boundary data into one with positive constant boundary data):

\begin{customthm}{\ref{F}$'$}\label{C''}
	\textit{Assume the same set-up as in Theorem \ref{C'}. Then for each $\tau\leq \tau_0$, \eqref{105''} admits a locally Lipschitz viscosity solution. Moreover, if $\tau<1$ then the solution is smooth and it is the unique solution in the class of continuous viscosity solutions.}
\end{customthm}

The existence of a smooth solution in Theorem \ref{C''} when $\tau<1$ will be obtained using the continuity method with $\tau$ as the parameter. The key is to obtain \textit{a priori} $C^1$ estimates that are uniform with respect to $\tau \in[0,\tau_0]$, followed by \textit{a priori} second derivative estimates that are uniform with respect to $\tau\in[0,\tau_0]$ when $\tau_0 < 1$ and locally uniform with respect to $\tau\in[0,1)$ when $\tau_0 = 1$. In the case that $\tau_0 = 1$, the existence of a Lipschitz viscosity solution will be obtained in the limit as $\tau\rightarrow 1$. In view of the counterexamples to $C^1$ regularity when $\tau=1$ in \cite{LN20b, LNX22}, one expects some degeneration of second derivative estimates as $\tau\rightarrow 1$. We now carry about this procedure.

\subsection{$C^0$ estimates}

In this section we first prove the following global $C^0$ estimate on solutions to \eqref{105''}: 

\begin{prop}[Global $C^0$ estimate]\label{121}
	Assume the same set-up as in Theorem \ref{C'}. Then there exist constants $C_1=C_1(g_0, f, \Gamma, \delta)$ and $C_2 = C_2(\delta, w, \ep)$ (with both $C_1$ and $C_2$ independent of $\tau$) such that any $C^2$ solution $u$ to \eqref{105''} with $\tau \leq \tau_0$ satisfies 
	\begin{align*}
	0< C_1 \leq u \leq C_2 \quad \text{on }M. 
	\end{align*}
\end{prop}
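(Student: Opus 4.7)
The plan is to prove the two bounds separately by maximum principle arguments based on the identity \eqref{104}. For the lower bound, suppose $u$ attains its minimum on $M$ at $x_0$. If $x_0\in\partial M$, then $u\geq\delta$ on $M$. Otherwise $x_0$ is interior, so $\nabla u(x_0)=0$ and $\nabla^2 u(x_0)\geq 0$, and \eqref{104} gives $A_{g_u}(x_0)\geq A_{g_0}(x_0)$ as symmetric bilinear forms. Using $g_u^{-1}=u^2g_0^{-1}$ together with the Courant--Fischer min-max principle, one obtains
\begin{align*}
\lambda\bigl(-g_u^{-1}A_{g_u}\bigr)(x_0)\leq u(x_0)^2\,\lambda\bigl(-g_0^{-1}A_{g_0}\bigr)(x_0)
\end{align*}
componentwise on sorted eigenvalues. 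Since the left-hand side lies in $\Gamma^\tau$ and $\Gamma^\tau$ is closed under componentwise increase of sorted eigenvalues (a standard property of symmetric convex cones containing $\Gamma_n^+$), we deduce $\lambda(-g_0^{-1}A_{g_0})(x_0)\in\Gamma^\tau$. Monotonicity and homogeneity of $f^\tau$ then yield $\tfrac{1}{2}\leq u(x_0)^2\, f^\tau(\lambda(-g_0^{-1}A_{g_0}))(x_0)$. The right-hand factor is bounded uniformly in $\tau\in[0,1]$ by a constant depending only on $g_0,f,\Gamma$ (via the definition $f^\tau(\lambda)=(\tau+n(1-\tau))^{-1}f(\lambda^\tau)$ and continuity of $f$ on $\overline{\Gamma}$), giving $u(x_0)\geq C_1(g_0,f,\Gamma,\delta)>0$.

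For the upper bound I would compare $u$ with $v:=cw$ for a large constant $c>0$. First, the $\tau_0$-admissibility of $w$ transfers to $\tau$-admissibility for all $\tau\in[0,\tau_0]$ with a modified constant $\ep'=\ep'(\ep,\tau_0,n)>0$: using the identity $\lambda^\tau=\tfrac{\tau}{\tau_0}\lambda^{\tau_0}+(1-\tfrac{\tau}{\tau_0})\sigma_1(\lambda)e$ together with $\sigma_1(\lambda)e\in\Gamma$, convexity of $\Gamma$ and concavity of $f$, one checks that $\lambda(-g_w^{-1}A_{g_w})\in\Gamma^\tau$ and $f^\tau(\lambda(-g_w^{-1}A_{g_w}))\geq\ep'$ in the viscosity sense. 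Since the Schouten tensor is invariant under constant rescaling, $\lambda(-g_v^{-1}A_{g_v})=c^2\lambda(-g_w^{-1}A_{g_w})$, so by homogeneity $f^\tau(\lambda(-g_v^{-1}A_{g_v}))\geq c^2\ep'$. I choose $c$ so that $cw\geq\delta$ on $\partial M$ and $c^2\ep'>\tfrac{1}{2}$. To show $u\leq v$, set $\psi_u:=-\log u$, $\psi_v:=-\log v$ (so $g_u=e^{2\psi_u}g_0$), and observe that $A_{e^{2\psi}g_0}$ depends on $\psi$ only through $D\psi$ and $D^2\psi$. If $\psi_v-\psi_u$ were to attain a positive interior maximum at some $x_0$, then $D\psi_u(x_0)=D\psi_v(x_0)$ and $D^2\psi_u(x_0)\geq D^2\psi_v(x_0)$, so $A_{g_u}(x_0)\leq A_{g_v}(x_0)$ and hence (again by min-max) $\lambda(-g_0^{-1}A_{g_u})(x_0)\geq\lambda(-g_0^{-1}A_{g_v})(x_0)$ componentwise. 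Combining with $u(x_0)>v(x_0)$ and $\lambda(-g_0^{-1}A_{g_v})(x_0)\in\Gamma^\tau$ (by homogeneity of the cone), monotonicity and homogeneity of $f^\tau$ give
\begin{align*}
\tfrac{1}{2}=u(x_0)^2 f^\tau(\lambda(-g_0^{-1}A_{g_u}))(x_0)>v(x_0)^2 f^\tau(\lambda(-g_0^{-1}A_{g_v}))(x_0)\geq c^2\ep'>\tfrac{1}{2},
\end{align*}
a contradiction. Thus $u\leq cw\leq c\max_M w=:C_2(\delta,w,\ep)$.

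The main obstacle is the upper bound comparison. A naive comparison in the $u$-variable fails because the term $-u^2g_0^{-1}A_{g_0}$ appearing in $-g_u^{-1}A_{g_u}$ has the wrong sign whenever $A_{g_0}$ is not sign-definite, obstructing a direct pointwise argument; passing to $\psi=-\log u$ circumvents this since $A_{e^{2\psi}g_0}$ depends on $\psi$ only through its first two derivatives, and the conformal factor then enters only via homogeneity of $f^\tau$ after the eigenvalue comparison. A further technical point is that the argument above assumes $w\in C^2$; when $w$ is merely continuous, as permitted by the hypotheses, the same conclusion follows from the standard viscosity doubling-of-variables / sup-convolution argument, which is justified precisely by the strict inequality $c^2\ep'>\tfrac{1}{2}$.
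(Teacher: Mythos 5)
Your proof is correct, but it takes a genuinely different route from the paper, most notably for the lower bound. The paper derives $R_{g_u}\leq -n(n-1)$ from the pointwise inequality $f^\tau(\lambda)\leq\frac{1}{n}\sigma_1(\lambda)$ and then compares $u$ with the (unique) solution $v$ of the prescribed-scalar-curvature Dirichlet problem $R_{g_v}=\text{const}<0$, $v=\delta$ on $\partial M$, established in Lemma~\ref{141}; you instead run a direct maximum-principle argument at an interior minimum of $u$, using $A_{g_u}(x_0)\geq A_{g_0}(x_0)$, the Courant--Fischer eigenvalue comparison, and the cone property that $\Gamma^\tau+\overline{\Gamma_n^+}\subset\Gamma^\tau$. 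Your route is more elementary (it avoids Lemma~\ref{141} entirely), and the $\tau$-uniform bound on the factor $f^\tau(\lambda(-g_0^{-1}A_{g_0}))(x_0)$ can be made cleaner by simply using $f^\tau(\lambda)\leq\frac{1}{n}\sigma_1(\lambda)$ (which is how the paper obtains $\tau$-uniformity) rather than invoking continuity of $f$ on $\overline\Gamma$. One thing the paper's route buys that yours does not: the comparison with $v$ is reused verbatim in Proposition~\ref{416} to obtain an interior lower bound that is uniform as $\delta\to 0$ (by comparing with the Aviles--McOwen solution at $\delta=0$), whereas your min-point argument only yields the global bound $\min(\delta,C(g_0))$, which degenerates at the boundary as $\delta\to 0$.

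For the upper bound you use the same barrier $cw$ as the paper, but you are more careful on two points the paper elides. First, you explicitly address the passage from the hypothesis on $f^{\tau_0}$ to the needed inequality on $f^\tau$ for $\tau\leq\tau_0$; the paper writes the barrier inequality with $f^{\tau_0}$ and then invokes the comparison principle for the equation in $f^\tau$ without comment. Your interpolation identity $\lambda^\tau=\frac{\tau}{\tau_0}\lambda^{\tau_0}+(1-\frac{\tau}{\tau_0})\sigma_1(\lambda)e$, together with concavity and $\sigma_1(\lambda)\geq n f^{\tau_0}(\lambda)$, in fact gives the sharp statement $f^\tau(\lambda)\geq f^{\tau_0}(\lambda)$ for $\lambda\in\Gamma^{\tau_0}$, so one may take $\ep'=\ep$. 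Second, you prove the comparison in the $\psi=-\log u$ variable rather than citing it. Both of these are fine and arguably make the exposition more self-contained; your observation that passing to $\psi$ removes the zeroth-order dependence is the standard reason comparison works here.
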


\begin{rmk}\label{417}
	It will be clear from the proof of Proposition \ref{121} that $C_2$ is non-decreasing as a function of $\delta$ and hence bounded from above as $\delta\rightarrow 0$. 
\end{rmk}

We will use the following well-known existence result in the proof of Proposition \ref{121} (and also later in the proof of Theorem \ref{C''}), whose proof we summarise for the convenience of the reader: 

\begin{lem}\label{141}
	Let $(M,g_0)$ be a smooth compact Riemannian manifold of dimension $n\geq 3$ with non-empty smooth boundary $\partial M$. Then for each $\delta>0$, there exists a unique smooth solution $g_v = v^{-2}g_0$ to the Dirichlet boundary value problem 
	\begin{align}\label{150}
	\begin{cases}
	R_{g_v} = -2n(n-1) & \text{on }M\backslash \partial M \\
	v = \delta & \text{on }\partial M. 
	\end{cases}
	\end{align}
\end{lem}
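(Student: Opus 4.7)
The plan is to reformulate the equation via the substitution $u = v^{-(n-2)/2}$ and invoke the classical existence and uniqueness theory for the Dirichlet problem with positive boundary data, following Loewner--Nirenberg \cite{LN74} and Aviles--McOwen \cite{AM88}. With $g_v = u^{4/(n-2)}g_0$, the conformal transformation law for scalar curvature recasts $R_{g_v} = -2n(n-1)$ as the semilinear Dirichlet problem
\[
-\tfrac{4(n-1)}{n-2}\Delta_{g_0} u + R_{g_0}\, u + 2n(n-1)\, u^{(n+2)/(n-2)} = 0 \text{ on } M\setminus\partial M, \quad u = \delta^{-(n-2)/2} \text{ on } \partial M.
\]
Since $M$ has non-empty boundary, the conformal Laplacian $L_{g_0} = \Delta_{g_0} - \tfrac{n-2}{4(n-1)}R_{g_0}$ has positive first Dirichlet eigenvalue; solving the associated linear Dirichlet problem for a positive eigenfunction produces a conformal change of the background metric that reduces matters to the case $R_{g_0}\leq 0$ without loss of generality.

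In this reduced setting, existence of a $C^{2,\alpha}$ solution follows from the standard sub/super-solution method: the super-linear term $u^{(n+2)/(n-2)}$ dominates the bounded linear lower-order terms for $u$ large, so a sufficiently large constant $\bar u \equiv K$ is a supersolution, while a sufficiently small positive constant $\underline u \equiv \varepsilon$ is a subsolution, with $\varepsilon \leq \delta^{-(n-2)/2} \leq K$ ensuring compatibility with the boundary datum. Monotone iteration between $\underline u$ and $\bar u$ yields a solution $u \in C^{2,\alpha}(\overline M)$ with $\underline u \leq u \leq \bar u$, and Schauder bootstrap upgrades it to $u \in C^\infty(\overline M)$; a further conformal change then transports $u$ back to the original $g_0$.

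For uniqueness, an interior minimum of any positive $C^2$ solution obeys a pointwise constraint coming from the equation itself (at such a point the super-linear contribution must balance the negative part of $R_{g_0}$), yielding a positive lower bound on $u$ throughout $M$. Writing the difference $w = u_1 - u_2$ of two solutions via the mean-value theorem produces a linear equation $-\Delta_{g_0} w + a(x) w = 0$ with $w \equiv 0$ on $\partial M$, where $a(x) = \tfrac{n-2}{4(n-1)}R_{g_0}(x) + \tfrac{n(n+2)}{2}\xi(x)^{4/(n-2)}$ for some $\xi(x)$ between $u_1(x)$ and $u_2(x)$. The lower bound on $u_1, u_2$ combined with $R_{g_0}\leq 0$ gives $a(x) \geq 0$, and the strong maximum principle then forces $w \equiv 0$. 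The main obstacle in executing this plan is purely the bookkeeping required to verify the nonnegativity of $a(x)$ globally (carefully matching the pointwise constraint from the minimum principle against the sign of $R_{g_0}$) and to track the auxiliary conformal reduction; there is no serious analytical difficulty, which is why the lemma is labelled well-known.
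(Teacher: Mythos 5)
There are two genuine gaps in your argument, one of which (the uniqueness step) is a real obstruction and not merely a ``bookkeeping'' issue.

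\textbf{The first Dirichlet eigenvalue of the conformal Laplacian need not be positive.} You assert that because $M$ has non-empty boundary, $L_{g_0}$ has positive first Dirichlet eigenvalue. This is false: take $\Sigma$ a closed surface of constant Gauss curvature $-K^2$ (genus $\geq 2$, rescaled), and let $M = \Sigma\times[0,1]$ with the product metric, so $R_{g_0} = -2K^2$ and $n=3$. The first Dirichlet eigenvalue of $-\Delta_{g_0}$ is $\pi^2$, so the first Dirichlet eigenvalue of $-\Delta_{g_0}+\tfrac{1}{8}R_{g_0}$ is $\pi^2 - K^2/4 < 0$ for $K > 2\pi$. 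Moreover even if positivity held, it would produce a conformal metric with $R\geq 0$, not $R\leq 0$ as your reduction requires. The reduction to $R_{g_0}<0$ \emph{is} achievable on any compact manifold with boundary, but by a different argument (e.g.\ solve $(L_{g_0}+c)\phi = 0$ with $\phi=1$ on $\partial M$ for $c>0$ large enough that $L_{g_0}+c$ has positive first Dirichlet eigenvalue; then $\phi>0$ by the maximum principle and $\phi^{4/(n-2)}g_0$ has scalar curvature $-c\phi^{-4/(n-2)}<0$). Note that the paper does \emph{not} attempt such a reduction; instead it splits into two cases according to the sign of the first eigenvalue, using the direct method (coercivity and strict convexity of the Yamabe-type functional) when it is nonnegative and the continuity method when it is negative.

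\textbf{The uniqueness argument does not close.} Granting $R_{g_0}<0$, you want $a(x) = \tfrac{n-2}{4(n-1)}R_{g_0}(x) + \tfrac{n(n+2)}{2}\xi(x)^{4/(n-2)} \geq 0$ pointwise. The interior minimum argument gives $2n(n-1)\,u(x_0)^{4/(n-2)} \geq -R_{g_0}(x_0)$ only at the minimum point $x_0$, and hence only a lower bound $u\geq c_0$ with $c_0^{4/(n-2)}$ comparable to $\inf_M(-R_{g_0})/(2n(n-1))$. Substituting this into $a(x)$ at an arbitrary point requires $(n+2)\inf_M(-R_{g_0}) \geq (n-2)\sup_M(-R_{g_0})$, which fails whenever $R_{g_0}$ oscillates enough, and the alternative boundary lower bound $u\geq\delta^{-(n-2)/2}$ fails for large $\delta$. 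So ``$a(x)\geq 0$'' does not follow as claimed. A clean fix that avoids this issue entirely: set $\psi = \log u$, so the equation becomes $\Delta_{g_0}\psi + |\nabla\psi|^2 = \tfrac{n-2}{4(n-1)}R_{g_0} + \tfrac{n(n-2)}{2}e^{4\psi/(n-2)}$, and apply the comparison principle to $\psi_1-\psi_2$ at an interior maximum (the zero-order nonlinearity $e^{4\psi/(n-2)}$ is strictly increasing in $\psi$, which gives the contradiction). This works without any sign assumption on $R_{g_0}$, and is essentially the comparison-principle mechanism the paper invokes.

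In summary, your sub/supersolution existence step is fine once the conformal reduction is corrected, but the stated justification for the reduction is wrong and the uniqueness argument has a gap that cannot be patched by bookkeeping alone; it needs either the logarithmic substitution above or the two-case structure of the paper's proof.
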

\begin{proof}
	Suppose first that the first eigenvalue of the conformal Laplacian of $g_0$ is nonnegative. Then $g_0$ is conformal to a metric with nonnegative scalar curvature, which we assume without loss of generality to be $g_0$ itself. In this case, the direct method of the calculus of variations applies (since the Yamabe functional is both coercive and strictly convex) to yield a unique solution to \eqref{150}.\medskip 
	
	If the first eigenvalue of the conformal Laplacian of $g_0$ is negative, then $g_0$ is conformal to a metric with negative scalar curvature, which we assume without loss of generality to be $g_0$ itself. Recalling that $R_{g_v} = v\Delta_{g_0}v - \frac{n}{2}|\nabla_{g_0}v|^2 + R_{g_0} v^2$, we consider for $t\in[0,1]$ the path of equations
	\begin{align}\label{151}
	\begin{cases}
	v_t\Delta_{g_0}v_t - \frac{n}{2}|\nabla_{g_0}v_t|^2 + R_{g_0} v_t^2 = -2n(n-1)t + R_{g_0}\delta^2 (1-t) & \text{on }M\backslash \partial M \\
	v_t = \delta & \text{on }\partial M. 
	\end{cases}
	\end{align}
	Since $R_{g_0}<0$, for each fixed $t\in[0,1]$ any solution to \eqref{151} is unique. In particular, $v_0 = \delta$ is the unique solution when $t=0$. Invertibility of the linearised operator also follows from the fact that $R_{g_0}<0$. Existence will therefore follow via the continuity method once $C^0$ estimates are established that are independent of $t$. \medskip 
	
	To this end, let $x_0$ be a maximum point for $v_t$. If $x_0\in\partial M$, then $v_t \leq \delta$. If $x_0\in M$, then $\Delta_{g_0} v_t (x_0) \leq 0$ and $\nabla_{g_0}v_t(x_0) = 0$ and hence
	\begin{align*}
	-2n(n-1)t + R_{g_0}(x_0)\delta^2(1-t) \leq R_{g_0}(x_0) v_t(x_0)^2. 
	\end{align*}
	Since $R_{g_0}<0$, this yields an upper bound for $v_t(x_0)$ which can clearly be made independent of $t$. To obtain the lower bound\footnote{One can avoid the use of \cite{AM88} by an ad-hoc but more elementary construction: in the interior one may compare $v_t$ to a perturbation of a Poincar\'e metric on a small ball, and near the boundary one may do an explicit construction using the distance function to the boundary. We instead appeal to \cite{AM88} for a more concise argument.}, extend $M$ via a collar neighbourhood $N$ such that the solution of Aviles \& McOwen \cite{AM88} to the problem
	\begin{align*}
	\begin{cases}
	R_{g_w} = - 2n(n-1) - \sup_M |R_{g_0}| \delta^2 & \text{on }(M\cup N)\backslash \partial(M\cup N) \\
	w = 0 & \text{on }\partial (M\cup N)
	\end{cases}
	\end{align*}
	satisfies $w \leq \delta$ on $\partial M$. Then the comparison principle implies $v_t \geq w$ on $M$. 
\end{proof}

We now give the proof of the global $C^0$ estimate in Proposition \ref{121}:

\begin{proof}[Proof of Proposition \ref{121}]
	Let $c=c(\delta,w,\epsilon)>0$ be a constant such that $c^2 \geq \frac{1}{2\ep}$ and $cw \geq \delta$ on $\partial M$. Then 
	\begin{align*}
	f^{\tau_0}(\lambda(-g_{cw}^{-1}A_{g_{cw}})) = c^2 f^{\tau_0}(\lambda(-g_w^{-1}A_{g_w})) \geq \frac{1}{2}
	\end{align*}
	in the viscosity sense on $M$. By the comparison principle, it follows that $u \leq cw$ on $M$. \medskip 
	
	To obtain the lower bound for $u$, first observe that by concavity and homogeneity of $f$, and the normalisation $f(1,\dots,1) = 1$, we have
	\begin{align*}
	f(\lambda) \leq f\bigg(\frac{\sigma_1(\lambda)}{n}e\bigg) + \nabla f\bigg(\frac{\sigma_1(\lambda)}{n}e\bigg)\cdot \bigg(\lambda - \frac{\sigma_1(\lambda)}{n}e\bigg) = \frac{f(e)}{n}\sigma_1(\lambda) = \frac{1}{n}\sigma_1(\lambda)
	\end{align*}
	for all $\lambda\in\Gamma$. Therefore the scalar curvature of $g_u$ satisfies $R_{g_u} \leq -2n(n-1)$ on $M$ in the viscosity sense. Letting $v$ denote the smooth solution to \eqref{150}, the comparison principle then implies $u \geq v$ on $M$. 
\end{proof}

We point out that although the global $C^0$ estimate in Proposition \ref{121} will suffice in the proof of Theorem \ref{F} (where the positive boundary data is fixed), in order to obtain a solution to the fully nonlinear Loewner-Nirenberg problem we will also need an interior $C^0$ estimate which remains uniform as $\delta\rightarrow 0$. This is the content of the following proposition: 

\begin{prop}\label{416}
	Assume the same set-up as in Theorem \ref{C'}, let $\delta_0>0$ and $M'\Subset M\backslash\partial M$. Then there exist constants $C_3 = C_3(g_0, f, \Gamma, M')$ and $C_4=C_4(\delta_0, w, \ep)$ (with both $C_3$ and $C_4$ independent of $\tau$) such that any $C^2$ solution $u$ to \eqref{105''} with $\tau \leq \tau_0$ and $0<\delta\leq \delta_0$ satisfies
	\begin{align*}
	0<C_3 \leq u \leq C_4 \quad\text{on }M'. 
	\end{align*}
\end{prop}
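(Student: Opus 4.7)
The plan is to treat the upper and lower bounds separately, since the dependence on $\delta$ comes in only through the upper bound. The upper bound is immediate from the earlier estimate, while the lower bound requires a classical Loewner-Nirenberg comparison that, crucially, gives control depending only on the interior set $M'$ and not on the boundary data $\delta$.

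For the upper bound, I would simply invoke Proposition \ref{121} at the level of $\delta_0$. Indeed, looking at the proof of Proposition \ref{121}, the barrier constructed was $cw$ where $c=c(\delta,w,\ep)$ was chosen so that $c^{2}\geq \tfrac{1}{2\ep}$ and $cw\geq \delta$ on $\partial M$. Replacing $\delta$ by $\delta_0$ in this construction yields a single constant $c_* = c_*(\delta_0,w,\ep)$ such that $c_* w$ is a viscosity supersolution bounding $u$ from above on $M$ whenever $0<\delta\leq \delta_0$. This gives $u\leq C_4:=\max_M(c_*w)$ on all of $M$, which is Remark \ref{417} made quantitative.

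For the interior lower bound, the key observation (already present in the proof of Proposition \ref{121}) is that concavity and the normalization $f(e)=1$ force $f(\lambda)\leq \tfrac{1}{n}\sigma_1(\lambda)$ on $\Gamma$, and the same is easily checked for $f^{\tau}$ on $\Gamma^{\tau}$ uniformly in $\tau\in[0,1]$. Applied to \eqref{105''}, this translates into a $\tau$-independent scalar curvature bound of the form $R_{g_u}\leq -\kappa$ in the viscosity sense on $M$, for a constant $\kappa=\kappa(n)>0$. I would then compare $u$ with the Aviles-McOwen solution $v_\star$ of the classical Loewner-Nirenberg problem
\begin{equation*}
R_{g_{v_\star}}=-\kappa \text{ on } M\setminus\partial M, \qquad v_\star=0 \text{ on } \partial M,
\end{equation*}
which is smooth and strictly positive in $M\setminus\partial M$. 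Since $u\geq 0=v_\star$ on $\partial M$ and $R_{g_u}\leq R_{g_{v_\star}}$ in the viscosity sense in the interior, the standard comparison principle for the semilinear scalar curvature equation (used already in Lemma \ref{141} and the proof of Proposition \ref{121}) yields $u\geq v_\star$ on all of $M$. Setting $C_3:=\min_{M'} v_\star$, we obtain a lower bound depending only on $g_0$, $f$, $\Gamma$ (through $\kappa$) and on $M'$, independently of $\tau$ and $\delta$.

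I do not anticipate a serious obstacle: the mechanism is essentially the observation that dropping from the fully nonlinear operator to $\sigma_1$ reduces everything to the classical Loewner-Nirenberg problem, for which positivity in the interior and the comparison principle are well known. The only point to be careful about is that the $\sigma_1$ inequality and comparison work uniformly in $\tau\in[0,1]$; this follows because the normalization $f^\tau(e)=1$ is preserved under the $\tau$-deformation, so the derived scalar curvature inequality $R_{g_u}\leq -\kappa$ has a $\tau$-independent constant $\kappa$.
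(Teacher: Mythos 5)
Your proposal is correct and follows essentially the same two-step scheme as the paper: the upper bound comes from the monotonicity of the barrier $cw$ in $\delta$ (this is exactly Remark \ref{417}), and the lower bound comes from observing $R_{g_u}\leq -\kappa(n)$ via the concavity/normalisation bound $f^\tau\leq\tfrac{1}{n}\sigma_1$ and then comparing with the Aviles--McOwen solution with zero boundary data, which is positive on any compact subset of $M\setminus\partial M$. Your extra remark that the $\sigma_1$ bound passes to $f^\tau$ uniformly in $\tau$ is a useful detail that the paper leaves implicit.
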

\begin{proof}
	By Remark \ref{417}, the constant $C_2$ in Proposition \ref{121} is non-decreasing as a function of $\delta$, and hence the existence of $C_4$ with the claimed dependencies is immediate. On the other hand, let $v$ denote the smooth solution to \eqref{150} with $\delta=0$, which exists by Aviles \& McOwen \cite{AM88}. As observed in the proof of Proposition \ref{121}, $R_{g_u} \leq -2n(n-1)$ on $M$ in the viscosity sense, and thus for any boundary data $\delta>0$ we may apply the comparison principle to assert $u \geq v$ on $M$. This yields a positive lower bound for $u$ on any compact subset of $M\backslash \partial M$, as required. 
\end{proof}

\subsection{Gradient estimates}\label{grad}

We next consider gradient estimates on solutions to \eqref{105''}. There are two lines of work in this direction: gradient estimates depending on two-sided $C^0$ bounds, and the more involved gradient estimates depending only on one-sided $C^0$ bounds. In light of Proposition \ref{121}, it suffices in this paper to work with gradient estimates depending on two-sided $C^0$ bounds.\medskip 

 Global gradient estimates on closed manifolds depending on two-sided $C^0$ bounds were obtained by Gursky \& Viaclovsky in \cite{GV03b} for $(f,\Gamma)= (\sigma_k^{1/k},\Gamma_k^+)$ when $k\geq 2$. Local interior and global gradient estimates depending on two-sided $C^0$ bounds were obtained by Guan \cite{Guan08} for solutions to \eqref{105''} for $(f,\Gamma)$ satisfying \eqref{21'}--\eqref{24'} and $\tau<1$. Recently, Chu, Li \& Li \cite{CLL23} obtained the local interior gradient estimate depending on two-sided $C^0$ bounds for $(f,\Gamma)$ satisfying conditions even more general than \eqref{21'}--\eqref{24'} and for all $\tau\leq 1$.\medskip 

For completeness, let us also briefly mention local interior gradient estimates depending only on one-sided $C^0$ bounds. These were first proved by Khomrutai \cite{Kho09} for $(f,\Gamma) = (\sigma_k^{1/k}, \Gamma_k^+)$ when $k<\frac{n}{2}, k=n-1$ or $k=n$. More recently, Chu, Li \& Li \cite{CLL23} obtained the local interior gradient estimate depending only on a one-sided $C^0$ bound whenever $\mu_\Gamma^+\not=1$, and gave counterexamples to such estimates when $\mu_\Gamma^+= 1$. In \cite{DN23}, the present authors obtained both a local interior and a local boundary gradient estimate depending only on one-sided $C^0$ bounds when $\mu_\Gamma^+>1$ and for all $\tau \leq 1$; see \cite[Theorem 1.8]{DN23} and the proof of \cite[Proposition 3.8]{DN23}, respectively. On the other hand, our recent result \cite[Theorem 1.5]{DN25a} demonstrates the failure of local boundary gradient estimates depending on one-sided $C^0$ bounds which are uniform as $\delta\rightarrow 0$ when $\mu_\Gamma^+ \leq 1$.\medskip

The goal of this section is to prove the following: 

\begin{prop}[Global gradient estimate]\label{123}
	Let $(M,g_0)$ be a smooth compact Riemannian manifold of dimension $n\geq 3$ with non-empty smooth boundary $\partial M$, and suppose $(f,\Gamma)$ satisfies \eqref{21'}--\eqref{24'}. Fix $\tau\in(0,1]$, positive functions $\psi\in C^\infty(M)$ and $\phi\in C^\infty(\partial M)$, and suppose that $u\in C^3(M)$ satisfies 
	\begin{align}\label{55}
	\begin{cases}
	f^\tau(\lambda(-g_u^{-1}A_{g_u})) = \psi, \quad \lambda(-g_u^{-1}A_{g_u})\in\Gamma^\tau & \text{in }M\backslash \partial M \\
	u = \phi & \text{on }\partial M.
	\end{cases}	
	\end{align} 
	Then
	\begin{align*}
	|\nabla_{g_0} u|_{g_0} \leq C \quad \text{in }M, 
	\end{align*}
	where $C$ is a constant depending on $n, f, \Gamma$ and upper bounds for $\|g_0\|_{C^3(M)}, \|\psi\|_{C^1(M)}, \|\phi\|_{C^1(\partial M)}$ and $\|\ln u\|_{C^0(M)}$, but independent of $\tau$. 
\end{prop}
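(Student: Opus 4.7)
The strategy is to apply the maximum principle to an auxiliary function of the form $W(x) = |\nabla_{g_0} u|_{g_0}^2\, e^{\chi(u)}$, with $\chi$ a suitable increasing function, and to treat separately the cases where the maximum of $W$ is attained in the interior or on the boundary. If $W$ is maximal in the interior, a Bernstein-type computation following the recent work of Chu, Li and Li \cite{CLL23} yields the desired bound: differentiating the equation twice at the maximum, using the concavity and homogeneity of $f^\tau$, the identity \eqref{104}, and the two-sided $C^0$ bound on $u$, one obtains $|\nabla_{g_0}u|_{g_0}(x_0) \leq C$ uniformly in $\tau \in (0,1]$.

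If instead $W$ is maximal on $\partial M$, the tangential component of $\nabla_{g_0}u|_{\partial M}$ is controlled by $\|\phi\|_{C^1(\partial M)}$ since $u|_{\partial M} = \phi$, so it remains to bound the normal component $\partial_\nu u|_{\partial M}$. I would do this by constructing barriers in a tubular neighbourhood $V_{\delta_0} = \{d_{g_0}(\cdot,\partial M) < \delta_0\}$ of the form
\[
u^+(x) = \tilde\phi(\pi(x)) + A\,d(x) - d(x)^2, \qquad u^-(x) = \tilde\phi(\pi(x)) - B\,d(x),
\]
where $\tilde\phi$ is a smooth extension of $\phi$ into $M$ and $\pi$ is the nearest-point projection onto $\partial M$. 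In a Fermi frame with $\nabla d = e_n$ along $\partial M$, a direct computation using \eqref{104} gives that the eigenvalues of $-g_{u^+}^{-1}A_{g_{u^+}}$ take the form $(2\tilde\phi + \tfrac{1}{2}A^2, \tfrac{1}{2}A^2, \ldots, \tfrac{1}{2}A^2)$ up to lower-order perturbations controlled by $\|g_0\|_{C^2}$, $\|\tilde\phi\|_{C^2}$ and $\|\ln u\|_{C^0}$. For $A$ sufficiently large, these eigenvalues lie deep inside $\Gamma_n^+ \subseteq \Gamma^\tau$ and force $f^\tau(\lambda(-g_{u^+}^{-1}A_{g_{u^+}})) \geq \psi$; choosing $A$ further so that $u^+ \geq \sup_M u$ on $\{d = \delta_0\}$, the comparison principle yields $u^+ \geq u$ in $V_{\delta_0}$ and hence $\partial_\nu u \leq A$ on $\partial M$. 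The construction of $u^-$ is analogous, using the uniform positive lower bound on $u$ from Proposition~\ref{121} to arrange $u^- \leq \inf_M u$ on $\{d = \delta_0\}$.

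The main obstacle is verifying the super- and sub-solution properties of the barriers uniformly in $\tau \in (0,1]$, including the critical case $\tau = 1$ where the equation loses uniform ellipticity. This should follow from the observation that the eigenvalues appearing in the barrier computation are comparable componentwise with a ratio bounded independently of $\tau$, so that the homogeneity of $f^\tau$ together with the normalisation $f(e)=1$ controls $f^\tau$ uniformly by the trace. A secondary technical point is obtaining the dependence only on $\|\phi\|_{C^1(\partial M)}$ rather than $\|\phi\|_{C^2}$; I would achieve this by taking $\tilde\phi$ to be the harmonic extension of $\phi$ (or an analogous canonical extension) and absorbing its second-derivative terms into the dominant $A^2$ contribution in the eigenvalue estimates.
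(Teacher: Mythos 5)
Your interior argument and the tangential boundary bound match the paper's approach (the paper first reduces to constant boundary data $\phi \equiv \delta$ by a conformal change of background metric, which makes the tangential derivatives vanish identically; either way this step is routine). Your upper barrier $u^+ = \tilde\phi + Ad - d^2$ is a genuine variant of what the paper does: the paper instead constructs an exterior annulus barrier $v(r) = (r^2-R^2)/R^2$ touching $\partial M$ from outside (Proposition~\ref{4'}), but both are $C^2$ supersolutions and both yield the bound $\partial_\nu u \leq C$ via the comparison principle, so this part is fine.

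The gap is in your lower barrier. You claim the construction of $u^-(x) = \tilde\phi(\pi(x)) - Bd(x)$ is ``analogous,'' but it is not: computing from \eqref{104},
\[
-g_{u^-}^{-1}A_{g_{u^-}} = -u^-\, g_0^{-1}\nabla_{g_0}^2 u^- + \tfrac{1}{2}|du^-|_{g_0}^2\,\mathrm{Id} - (u^-)^2 g_0^{-1}A_{g_0},
\]
and the dominant term for $B$ of any appreciable size is $\tfrac{1}{2}|du^-|^2\,\mathrm{Id} \approx \tfrac12 B^2\,\mathrm{Id}$, which is \emph{positive}, while $-u^-\nabla^2 u^-$ is only $O(B)$ (using that the normal--normal entry of $\nabla^2 d$ vanishes). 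Thus $\lambda(-g_{u^-}^{-1}A_{g_{u^-}})$ is again deep inside $\Gamma_n^+$ with $f^\tau$ large, so $u^-$ is a strict \emph{super}solution, not a subsolution. The comparison principle would then give $u \leq u^-$ near $\partial M$, the opposite of what you need, and no bound $\partial_\nu u \geq -B$ follows. There is no symmetry here: in these negative-curvature conformal equations, functions with a large gradient near the boundary are always supersolutions because the $\tfrac12|dw|^2$ term has a fixed favourable sign; they can serve only as upper barriers.

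The paper obtains the lower bound for $\partial_\nu u$ by an entirely different and much simpler mechanism, already available from the $C^0$ estimate. By concavity and homogeneity of $f$ and the normalisation $f(e)=1$, one has $f(\lambda) \leq \tfrac1n\sigma_1(\lambda)$ on $\Gamma$, so any solution of \eqref{55} satisfies $R_{g_u} \leq -2n(n-1)\psi \leq -c < 0$. Comparing with the smooth solution $v$ of the scalar-curvature Dirichlet problem \eqref{150} (same boundary data) gives $u \geq v$ on $M$, and since $u(x_0)=v(x_0)$ on $\partial M$ this yields $\partial_\nu u(x_0) \geq \partial_\nu v(x_0)$, which is controlled. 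You should replace your $u^-$ argument with this. A minor secondary point: the harmonic extension of a merely $C^1$ boundary datum $\phi$ need not be $C^2$ up to $\partial M$, so the final paragraph about obtaining $\|\phi\|_{C^1}$-only dependence via harmonic extension does not quite work as stated; the paper sidesteps this by the conformal change to constant boundary data.
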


The main new aspect of Proposition \ref{123} is the boundary gradient estimate, as interior gradient estimates follow from the recent work of Chu, Li \& Li \cite{CLL23}. As pointed out above, the proof of local boundary gradient estimates depending on one-sided $C^0$ bounds in \cite{DN23} does not work when $\mu_\Gamma^+ \leq 1$. In this paper we provide a variant of the construction in \cite{DN23} that is valid for any value of $\mu_\Gamma^+$, but depends on a two-sided $C^0$ bound near $\partial M$. The main new ingredient is the following counterpart to \cite[Proposition 3.4]{DN23}, which will serve as an upper barrier for $u$ near $\partial M$.

\begin{prop}\label{4'}
	Suppose $(f,\Gamma)$ satisfies \eqref{21'}--\eqref{25'} and let $g_0$ be a Riemannian metric defined on a neighbourhood of the origin in $\mathbb{R}^n$. Fix constants $0< \delta < m$. Then there exists a constant $\overline{R} = \overline{R}(g_0, m)>0$ such that whenever $0<R \leq \overline{R}(g_0, m)$, $r_1 = R\sqrt{1+\delta}$ and $r_2 = R\sqrt{1+m}$, the metric $g_0$ is well-defined on the annulus $A_{r_1,r_2}$, and the conformal metric $g_v = v^{-2}g_0$ defined by 
	\begin{align}\label{3'}
	v(r) = \frac{r^2 - R^2}{R^2}
	\end{align}
	satisfies
	\begin{align*}
	\begin{cases}
	f(\lambda(-g_v^{-1}A_{g_v})) \geq \frac{1}{2}, \quad \lambda(-g_v^{-1}A_{g_v})\in\Gamma & \text{in }A_{r_1,r_2} \\
	v(x) = \delta & \text{on }\{r=r_1\} \\
	v(x) = m & \text{on }\{r=r_2\}.  
	\end{cases}
	\end{align*}
\end{prop}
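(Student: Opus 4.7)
The plan is a direct verification by computing $-g_v^{-1}A_{g_v}$ in Cartesian coordinates near the origin and extracting a clean principal part of size $(2/R^2)\operatorname{Id}$ whose eigenvalues dominate the perturbative corrections from the non-flat background when $R$ is sufficiently small.

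First I would invoke the conformal transformation formula \eqref{104} and raise an index via $g_v^{-1}=v^2 g_0^{-1}$ to write
$$-g_v^{-1}A_{g_v} \;=\; -v\,g_0^{-1}\nabla_{g_0}^2 v \;+\; \tfrac{1}{2}|dv|_{g_0}^2\operatorname{Id} \;-\; v^2\,g_0^{-1}A_{g_0}.$$
Working in a chart at the origin in which $(g_0)_{ij}(0)=\delta_{ij}$ (the natural choice in the intended applications of this barrier, e.g.\ Fermi coordinates at a boundary point), I would substitute the explicit derivatives $\partial_i v = 2x_i/R^2$ and $\partial_i\partial_j v = 2\delta_{ij}/R^2$ and expand each factor involving $g_0$ about its flat value. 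The identity that makes the construction work is
$$-\tfrac{2v}{R^2} \;+\; \tfrac{2r^2}{R^4} \;=\; \tfrac{2(r^2-vR^2)}{R^4} \;=\; \tfrac{2}{R^2},$$
which shows that the leading contributions from the Hessian term and the $|dv|^2$ term combine into exactly $(2/R^2)\operatorname{Id}$ in the Euclidean case, and hence, by continuity, up to small corrections in the general case.

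The next step is to estimate the size of the remaining correction $E$, so that
$$-g_v^{-1}A_{g_v} \;=\; \tfrac{2}{R^2}\operatorname{Id} + E.$$
Using $r\le R\sqrt{1+m}$ on the annulus $A_{r_1,r_2}$, together with uniform $C^2$-bounds on $g_0$ in a fixed neighborhood of the origin, the three sources of error — (i) the Christoffel correction $\Gamma_{kj}^l\partial_l v$ in $\nabla_{g_0}^2 v$, (ii) the deviation of $(g_0)^{ij}$ from $\delta^{ij}$ in the contractions producing $g_0^{-1}\nabla_{g_0}^2 v$ and $|dv|_{g_0}^2$, and (iii) the zeroth-order Schouten term $v^2 g_0^{-1}A_{g_0}$ — each contribute $O_{g_0,m}(1)$. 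Taking $\overline{R}=\overline{R}(g_0,m)$ small enough guarantees that $\|E\|$ is dominated by $R^{-2}$, so that every eigenvalue of $-g_v^{-1}A_{g_v}$ lies in, say, the interval $[R^{-2},\,3R^{-2}]$.

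With this eigenvalue information the conclusion is immediate: by \eqref{22'} we have $\Gamma_n^+\subseteq\Gamma$, so all-positive eigenvalues force $\lambda(-g_v^{-1}A_{g_v})\in\Gamma$; and by monotonicity, 1-homogeneity, and the normalization \eqref{25'},
$$f\bigl(\lambda(-g_v^{-1}A_{g_v})\bigr) \;\ge\; f\bigl(R^{-2}e\bigr) \;=\; R^{-2} \;\ge\; \tfrac{1}{2}$$
after a further reduction of $\overline{R}$ if necessary, while the boundary values $v(r_1)=\delta$ and $v(r_2)=m$ are a direct substitution into \eqref{3'}. I expect no deep analytic obstacle: the construction is an explicit Poincar\'e-type barrier on a small annulus, and the whole content of the proof is the bookkeeping showing that $\|E\|\cdot R^2\to 0$ as $R\to 0$. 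The only point requiring some care is the initial normalization of coordinates so that the Euclidean principal term $(2/R^2)\operatorname{Id}$ genuinely appears with a scalar multiple of the identity, rather than of the inverse metric $(g_0)^{ij}(0)$; once this is arranged, the argument proceeds by routine estimation.
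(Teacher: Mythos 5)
Your proposal takes essentially the same route as the paper: expand $-g_v^{-1}A_{g_v}$ via \eqref{104} in coordinates centred at the origin, use the algebraic identity coming from $v=(r^2-R^2)/R^2$ to extract the principal part $(2/R^2)\operatorname{Id}$, bound the correction terms uniformly on the annulus, and take $R$ small so that all eigenvalues land in $\Gamma_n^+$ with $f\geq\tfrac12$. One small caveat worth noting: with only the normalization $(g_0)_{ij}(0)=\delta_{ij}$ the Christoffel symbols are $O(1)$ and $g_0-\delta=O(r)$, so the correction is $O(1/R)$ on the annulus rather than the $O_{g_0,m}(1)$ you state; this is still dominated by $2/R^2$ as $R\to 0$, so the conclusion holds, but the claimed $O(1)$ bound genuinely requires full normal coordinates (where $g_0-\delta=O(r^2)$, $\Gamma=O(r)$), which is precisely what the paper uses.
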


\begin{rmk}
	In the case that $g_0$ is Euclidean, $g_v$ is an $R$-dependent rescaling of the hyperbolic metric on the exterior of $B_R$. 
\end{rmk}

\begin{proof} 
	In a fixed normal coordinate system centred at the origin, since $v = v(r)>0$ we have (see e.g.~\cite[Appendix B]{DN23})
	\begin{align*}
	(g_v^{-1}A_{g_v})^p_j & =v^2\bigg(\lambda\delta_j^p + \chi\frac{x^px_j}{r^2}\bigg) + O(r^2)v|v_{rr}| + O(r)v|v_r| + O(1)v^2 \quad \text{as }r\rightarrow 0,
	\end{align*}
	where 
	\begin{equation*}
	\lambda = \frac{v_r}{rv}\bigg(1-\frac{rv_r}{2v}\bigg) \quad\text{and}\quad \chi = \frac{v_{rr}}{v} - \frac{v_r}{vr}
	\end{equation*}
	and the big $O$ terms depend only on $g_0$. Therefore 
	\begin{align}\label{130}
	(-g_v^{-1}A_{g_v})^p_j \geq -v^2\bigg(\lambda\delta_j^p + \chi\frac{x^px_j}{r^2}\bigg) - |\Psi|\delta_j^p
	\end{align}
	in the sense of matrices, where $|\Psi| = O(r^2)v|v_{rr}| + O(r)v|v_r| + O(1)v^2$ as $r\rightarrow 0$. Note that the eigenvalues of the quantity on the RHS of \eqref{130} are given by 
	\begin{align}\label{203}
	-(\chi v^2 + \lambda v^2 + |\Psi| \,\, , \,\, \lambda v^2 + |\Psi| \,\, , \,\, \dots \,\, , \,\, \lambda v^2 + |\Psi|). 
	\end{align}
	
	We now compute $v_r = \frac{2r}{R^2}$, $v_{rr} = \frac{2}{R^2}$ and hence
	\begin{align*}
	\frac{v_r}{vr} = \frac{2}{r^2 - R^2}, \quad \frac{rv_r}{2v} = \frac{r^2}{r^2 - R^2} \quad \text{and} \quad \frac{v_{rr}}{v} = \frac{2}{r^2 - R^2}. 
	\end{align*}
	It follows that 
	\begin{align}\label{200}
	\lambda = -\frac{2R^2}{(r^2 - R^2)^2} \quad \text{and}\quad \chi = 0. 
	\end{align}
	On the other hand, 
	\begin{align}\label{201}
	|\Psi| & \leq Cr^2v|v_{rr}| + Crv|v_r| + Cv^2 \leq \frac{Cr^4}{R^4},
	\end{align}
	where here and below $C$ is a constant depending only on $g_0$. Substituting \eqref{200} and \eqref{201} into \eqref{203}, we see that each eigenvalue of $-g_v^{-1}A_{g_v}$ satisfies
	\begin{align}\label{1'}
	\lambda_i(-g_v^{-1}A_{g_v}) \geq -(\lambda v^2 + |\Psi|) \geq 2R^{-2} - \frac{Cr^4}{R^4}.
	\end{align}
	
	Now let $r_1 = R\sqrt{1+\delta}$ and $r_2 = R\sqrt{1+m}$. It is then clear that $v=\delta$ on $\{r=r_1\}$ and $v = m$ on $\{r=r_2\}$. Moreover, since $m$ is fixed, $\overline{R}$ may be fixed sufficiently small (depending on $g_0$ and $m$) so that $R\leq \overline{R}$ implies $B_{r_2}(0)$ is contained in the region in which $g_0$ is defined. Also, by \eqref{1'} we see that in $A_{r_1,r_2}$ each eigenvalue of $-g_v^{-1}A_{g_v}$ satisfies
	\begin{align*}
	\lambda_i(-g_v^{-1}A_{g_v}) \geq  2R^{-2} - \frac{Cr_2^4}{R^4} =  2R^{-2} - C(1+m)^2.
	\end{align*}
	Therefore, after taking $\overline{R}$ smaller if necessary (but in a way that only depends on $g_0$ and $m$), we have $\lambda_i(-g_v^{-1}A_{g_v})\geq \frac{1}{2}$ in $A_{r_1,r_2}$ for each $i$ whenever $R \leq \overline{R}$. Recalling the monotonicity of $f$ in \eqref{24'} and the normalisation \eqref{25'}, the result then follows. 
\end{proof}

We now give the proof of Proposition \ref{123}, which is a combination of ideas from \cite{CLL23, DN23}:

\begin{proof}[Proof of Proposition \ref{123}]
	After a conformal change of background metric, we may assume without loss of generality that the boundary data $\phi$ is equal to a constant $\delta>0$.\medskip 
	
	Suppose $x_0 \in M$ is a point at which the maximum of $|\nabla_{g_0} u|_{g_0}$ is obtained. If $x_0$ is an interior point, then one may run the same argument as in the proof of \cite[Theorem 7.1]{CLL23}, but instead taking $\rho \equiv 1$ on $M$ from the outset (note that the function $v$ in \cite{CLL23} corresponds to $-\ln u$ in our notation). Ultimately, one arrives at equation (62) therein, but with all appearances of $\rho$ replaced by 1 and all terms involving derivatives of $\rho$ now absent. The crucial point is that the positive term involving $|\nabla v|^4$ in (62) is unaffected, and thus one obtains the desired estimate as in \cite{CLL23}.\medskip 
	
	Now suppose $x_0\in\partial M$. Since $u$ is constant on $\partial M$, all tangential derivatives of $u$ vanish on $\partial M$, and thus we only need to bound $\nabla_{\nu} u$ on $\partial M$, where $\nu$ is the inward pointing unit normal to $\partial M$. As explained in the proof of Proposition \ref{121}, $u \geq v$ where $v$ is the solution to \eqref{150}, and hence for $x\in M$ we have
	\begin{align*}
	\frac{u(x) - u(x_0)}{d_{g_0}(x,x_0)} = \frac{u(x) - v(x_0)}{d_{g_0}(x,x_0)} \geq \frac{v(x) - v(x_0)}{d_{g_0}(x,x_0)},
	\end{align*}
	which implies $\nabla_\nu u(x_0) \geq \nabla_{\nu} v(x_0)$.\medskip 
	
	It remains to prove an upper bound for $\nabla_{\nu} u$, for which we appeal to Proposition \ref{4'}; the method is similar to our proof of \cite[Proposition 3.3]{DN23}. First we attach a collar neighbourhood $N$ to $\partial M$ such that $g_0$ extends smoothly to a metric on $M\cup N$, which we also denote by $g_0$. We then let 
	\begin{equation*}
	D = \inf_{x\in \partial M} \operatorname{d}_{g_0}(x,\partial(M\cup N))
	\end{equation*}
	denote the thickness of $N$. Fix $x_0\in\partial M$, let $\delta$ be as at the start of the proof, let $m = \sup_M u$ and take an annulus $A_{r_-, r_+}(y)$ such that:\medskip 
	\begin{enumerate}
		\item $y\in N$,
		\item $r_- + r_+< D$ (i.e.~the annulus is contained in $M\cup N$)
		\item $\mathbb{S}_{r_-}(y)\cap \partial M = \{x_0\}$ (i.e.~the inner boundary of the annulus touches $\partial M$ at $x_0$),
		\item The closed ball $\overline{B_{r_+}(y)}$ is contained in a single normal coordinate chart $(U, \zeta)$ mapping $y$ to the origin,
		\item $\frac{r_-}{\sqrt{1+\delta}} = \frac{r_+}{\sqrt{1+m}} \eqdef R$, where $R \leq \overline{R}$ is sufficiently small so that Proposition \ref{4'} applies (here we are implicitly identifying the annulus with its image under $\zeta$, which is possible by Property 4).
	\end{enumerate}
	In what follows, we continue to implicitly make the identification between $A_{r_-, r_+}(y)$ and its image under $\zeta$. We wish to apply the comparison principle in $A_{r_-, r_+}(y)\cap M$, whose boundary can be decomposed as $(B_{r_+}(y)\cap \partial M) \cup (\mathbb{S}_{r_+}(y)\cap M)$. The function $v$ on $A_{r_-, r_+}(y)$, as defined in \eqref{3'}, satisfies $u \leq v$ on $B_{r_+}(y)\cap \partial M$, since $v(x_0) = \delta = u(x_0)$ and $v$ is radially increasing. Since $m=\sup_M u$, we also have $u \leq v$ on $\mathbb{S}_{r_+}(y) \cap M$. The comparison principle then implies that $u \leq v$ in $A_{r_-, r_+}(y)\cap M$. Therefore, for $x\in A_{r_-, r_+}(y)\cap M$ we have
	\begin{align*}
	\frac{u(x) - u(x_0)}{d_{g_0}(x,x_0)} = \frac{u(x) - v(x_0)}{d_{g_0}(x,x_0)} \leq \frac{v(x) - v(x_0)}{d_{g_0}(x,x_0)},
	\end{align*}
	which implies $\nabla_{\nu} u(x_0) \leq \nabla_{\nu} v(x_0)$, as required. 
\end{proof}

\subsection{Proof of Theorem \ref{F}}

We now give the proof of Theorem \ref{C''} (equivalently Theorem \ref{F}) using the continuity method: 

\begin{proof}[Proof of Theorem \ref{C''}]
	We first consider the case $\tau_0<1$. Fix $\alpha\in(0,1)$ and define 
	\begin{align*}
	S = \{\tau\in[0,\tau_0]: \eqref{105''} \text{ admits a solution in }C^{2,\alpha}(M)\}.
	\end{align*}
	Since \eqref{105''} admits a unique smooth solution when $\tau=0$ by Lemma \ref{141}, $S$ is non-empty. Moreover, it is well-known (see e.g.~\cite[Corollary 2.5]{GV03b}) that the linearised operator is invertible as a map from $C^{2,\alpha}(M)$ to $C^{0,\alpha}(M)$, and thus $S$ is open. To see that $S$ is closed, it suffices to show that solutions to \eqref{105''} admit a $C^{2,\alpha'}$ estimate for any $\alpha\leq \alpha'<1$ uniformly in $\tau\in[0,\tau_0]$. The $C^0$ estimate is precisely the assertion of Proposition \ref{121}. We may then apply Proposition \ref{123} to obtain the gradient estimate. Since $\tau_0<1$, we may then apply the following global Hessian estimate due to Guan \cite[Theorem 3.2]{Guan08} (see also the earlier work of Gursky \& Viaclovsky \cite{GV03b} in the case $(f,\Gamma) = (\sigma_k^{1/k},\Gamma_k^+)$ for $k\geq 2$): if $\tau<1$ and $u\in C^4(M)$ satisfies \eqref{55} in $M$, then
	\begin{align*}
	|\nabla^2_{g_0} u|_{g_0} \leq C \quad \text{in }M, 
	\end{align*}
	where $C$ is a constant depending on $n, f, \Gamma, (1-\tau)^{-1}$ and upper bounds for $\|g_0\|_{C^4(M)}, \|\psi\|_{C^2(M)}$, $\|\phi\|_{C^2(\partial M)}$ and $\|\ln u\|_{C^1(M)}$. See also the proof of \cite[Proposition 3.9]{DN23} for more details. With the $C^2$ estimate established, \eqref{105''} becomes uniformly elliptic, and since we assume $f$ is concave in \eqref{23'}, we may then apply the regularity theory of Evans-Krylov \cite{Ev82, Kry82, Kry83} to obtain a $C^{2,\alpha'}$ estimate for any $\alpha\leq \alpha'<1$. Therefore $S$ is closed and hence $S = [0,\tau_0]$. Higher regularity follows from classical Schauder theory, and uniqueness is a consequence of the comparison principle (see e.g.~\cite[Proposition 3.7]{DN23}).\medskip 
	
	We now consider the case $\tau_0 = 1$. In this case, the same argument as above yields a unique smooth solution $u_\tau$ to \eqref{105''} for each $\tau<1$, and moreover the sequence is uniformly bounded in $C^1(M)$ by Proposition \ref{121} and Proposition \ref{123}. Therefore, along a sequence $\tau_i\rightarrow 1$, these solutions converge uniformly to some $u\in C^{0,1}(M)$. The fact that $u$ is a viscosity solution to \eqref{105''} with $\tau=1$ follows from exactly the same argument as in the proof of \cite[Theorem 1.3]{LN20b} -- we omit the details here. 
\end{proof}

\subsection{Proof of Theorem \ref{C}}\label{s100}

We now give the proof of Theorem \ref{C'}, from which Theorem \ref{C} follows immediately as explained at the start of the section. 

\begin{proof}[Proof of Theorem \ref{C'}]
	With Theorem \ref{C''} established and in view of the recent gradient estimates of Chu, Li \& Li \cite{CLL23}, the proof of Theorem \ref{C'} now follows from an adaptation of the arguments \cite[Section 4]{DN23}, as we describe now. \medskip 
	
	Let us first consider the case $\tau_0<1$. For each $\delta>0$, let $u_\delta$ denote the smooth solution to \eqref{105''} with $\tau=\tau_0$, whose existence is guaranteed by Theorem \ref{C''}. We wish to take $\delta\rightarrow 0$ to obtain a smooth solution to \eqref{105'} with $\tau=\tau_0$. By the comparison principle, $u_{\delta_2}\leq u_{\delta_1}$ if $\delta_2<\delta_1$. By Proposition \ref{416}, $\| \ln u_\delta \|_{C^0(K)}$ is bounded independently of $\delta$ for any compact $K\Subset M\backslash \partial M$. With this interior two-sided $C^0$ estimate established, we may then appeal to Chu, Li \& Li \cite[Theorem 7.1]{CLL23} (alternatively one may appeal to the earlier gradient estimate of Guan \cite[Theorem 2.1]{Guan08}, which applies since we are currently assuming $\tau_0<1$) to obtain the gradient estimate on $K$. The Hessian estimate on $K$ then follows from Guan \cite[Theorem 3.1]{Guan08}, which applies since $\tau_0<1$. Finally, higher order estimates  follow from the theory of Evans-Krylov \cite{Ev82, Kry82} and classical Schauder theory. Therefore, one may send $\delta\rightarrow 0$ along a subsequence to obtain a smooth positive solution to \eqref{105'} (this is the direct counterpart to our existence result \cite[Proposition 4.1]{DN23}). The existence of a solution satisfying the desired asymptotics \eqref{117} then follows exactly the same proof as \cite[Proposition 4.4]{DN23}, and the uniqueness of the solution to \eqref{105} then follows exactly the same proof as \cite[Proposition 4.6]{DN23}. This completes the proof of Theorem \ref{C'} in the case $\tau_0<1$.\medskip 
	
	 We now consider the case $\tau_0 = 1$. By our argument above, we first obtain for each $\tau<1$ a smooth solution $u^\tau$ to \eqref{105'}. Since the interior $C^0$ estimate in Proposition \ref{416} and the interior gradient estimate of Chu, Li \& Li \cite{CLL23} depending on a two-sided $C^0$ bound are uniform with respect to $\tau \leq 1$, it follows that a subsequence $\{u^{\tau_i}\}$ converges locally uniformly in $C^{0,\alpha}$ to some $u\in C_{\operatorname{loc}}^{0,1}(M,g_0)$ for each $\alpha \in (0,1)$. The fact that $u$ is a viscosity solution to \eqref{105'} follows from exactly the same argument as in the proof of \cite[Theorem 1.4]{LN20b}. Finally, the fact that $u$ satisfies the desired asymptotics \eqref{117} and is maximal follows exactly the same argument as given in \cite[Section 4.4]{DN23}. This completes the proof of Theorem \ref{C'}. 
\end{proof}

\begin{appendices}

	\section{Appendix: Proof of Theorem \ref{19}}\label{appa}

	In this appendix we give the proof of Theorem \ref{19}, which follows the line of arguments in \cite{AKKLT04}; we also provide some additional details at some points in the proof (in particular when controlling the size of the image of the harmonic coordinate charts).\medskip 
	
	We will need the following notion of harmonic radius:
	
	\begin{defn}\label{58}
		Let $(M, g)$ be an $n$-dimensional ($n\geq 3$) Riemannian manifold with non-empty boundary $\partial M$. For $\ell\in\mathbb{N}\cup\{0\}$, $\sigma\in(0,1)$, $Q>1$ and $p\in M$, we call $r^{\ell+\sigma}_{\mathrm{har}}(p,g,Q)\in(0,\infty]$ the \textit{$C^{\ell, \sigma}$-harmonic radius of $(M,g)$ at $p$} if it is the largest number such that for all $\rho<r_{\mathrm{har}}^{\ell+\sigma}$, the two statements in Definition \ref{17} hold when restricted to harmonic coordinate charts. If $M$ is compact, we define the \textit{$C^{\ell, \sigma}$-harmonic radius of $(M,g)$} by 
		\begin{align*}
		r_{\mathrm{har}}^{\ell+\sigma}(M,g,Q) \defeq \inf_{p\in M} r_{\mathrm{har}}^{\ell+\sigma}(p,g,Q)>0. 
		\end{align*}
		Equivalently, $r_{\mathrm{har}}^{\ell+\sigma}(M,g,Q)$ is the largest number such that $(M,g)\in\mathcal{N}^n_{\mathrm{har}}(\ell+\sigma,\rho, Q)$ for all $\rho< r_{\mathrm{har}}^{\ell+\sigma}(M,g,Q)$. 
	\end{defn}
	
	It is by now standard to see that the $C^{\ell,\sigma}$-harmonic radius at a given point $p$ is well-defined and positive; in fact, Theorem \ref{19} is essentially a quantitative version of this assertion. \medskip 
	
	We also need to extend Definition \ref{401} to the notion of pointed $C^{\ell,\sigma}$-convergence: 
	
	\begin{defn}
		A sequence of smooth pointed Riemannian manifolds $\{(M_k, g_k, p_k)\}$ converges in the pointed $C^{\ell,\sigma}$-topology to $(M, g, p)$ if $M$ is a smooth manifold, $g$ is a $C^{\ell,\sigma}$ Riemannian metric on $M$, $p\in M$, and the following holds for sufficiently large $k$. There exist:
		\begin{enumerate}
			\item Radii $\rho_k<\sigma_k$ with $\rho_k\rightarrow\infty$ and compact sets $\overline{\Omega}_k \subset M_k,\, \overline{\mathcal{V}}_k\subset M$ such that
			\begin{align*}
			B_{\rho_k}(p_k)\subset\overline{\Omega}_k \subset B_{\sigma_k}(p_k), \quad B_{\rho_k}(p)\subset \overline{\mathcal{V}}_k \subset B_{\sigma_k}(p) \quad \text{for all }k,
			\end{align*}
			\item Diffeomorphisms $F_k:\overline{\mathcal{V}}_k \rightarrow \overline{\Omega}_k$ which restrict to diffeomorphisms from $\overline{\mathcal{V}}_k\cap \partial M$ to $\overline{\Omega}_k \cap \partial M_k$ and satisfy $F_k^{-1}(p_k)\rightarrow p$,
			\item A locally finite collection of bounded charts on $M$, such that on each chart $(F_k^* g_k)_{ij}\rightarrow g_{ij}$ in the $C^{\ell,\sigma}$-topology.
		\end{enumerate}
	\end{defn}
	
	\begin{defn}\label{57}
		Under the same set-up as Definition \ref{17}, define $\mathcal{N}_*^n(\ell+\sigma,\rho,Q)$ in the same way as $\mathcal{N}^n(\ell+\sigma,\rho,Q)$ but in the class of pointed Riemannian manifolds. 
	\end{defn}

One then has the following extension of Theorem \ref{20} (once again we refer to \cite[Theorem 11.3.6]{Pet16} for the proof):

\begin{thm}\label{20''}
	Let $n\geq 3$, $\ell\in\mathbb{N}\cup\{0\}$, $0<\sigma<\sigma'<1$, $\rho\in(0,\infty)$ and $Q>1$. Then $\mathcal{N}_*^n(\ell+\sigma,\rho,Q)$ is precompact in the pointed $C^{\ell,\sigma'}$-topology. 
\end{thm}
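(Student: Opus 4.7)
The plan is to follow the standard Cheeger--Gromov diagonal construction, adapted to the boundary setting as in \cite{AKKLT04} and \cite[Theorem 11.3.6]{Pet16}. The key input is that membership in $\mathcal{N}_*^n(\ell+\sigma,\rho,Q)$ provides, around every point, either an interior chart on a Euclidean ball of fixed radius or a boundary chart on a fixed half-ball, in both cases with uniform $C^{\ell,\sigma}$ control on the metric components as in \eqref{14}. The new ingredient compared to Theorem \ref{20} is only the additional diagonalisation over expanding metric balls around the basepoints.

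First, given a sequence $(M_k,g_k,p_k)$ in the class, I would build in each $M_k$ a maximal $\rho/10$-separated net $\{q_{k,i}\}_{i\geq 0}$ with $q_{k,0}=p_k$, ordered by distance from $p_k$. The uniform $C^0$ control on $(g_k)_{ij}$ in the defining charts gives a uniform upper bound on the number of net points meeting each fixed metric ball $B_R(p_k)$, so one can pass to a subsequence (diagonal in $R\to\infty$) along which the nets share a common combinatorial structure. Each $q_{k,i}$ should be classified once and for all as interior or boundary, using the threshold $d_{g_k}(q_{k,i},\partial M_k)\gtrless \rho/2$, so that it always receives the same type of chart along the subsequence.

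Second, attach to each net point the associated chart $\phi_{k,i}$ furnished by Definition \ref{17}. Uniform $C^{\ell,\sigma}$ bounds on the metric translate into uniform $C^{\ell+1,\sigma}$ bounds on the transition maps $\phi_{k,i}\circ\phi_{k,j}^{-1}$ on each overlap (standard for harmonic transitions, and still true in the $C^{\ell,\sigma}$-controlled setting of Definition \ref{17} after passing to a further subsequence). Arzel\`a--Ascoli, applied chart by chart and overlap by overlap, then extracts a subsequence along which both the transition maps and the components $(g_k)_{ij}$ converge in $C^{\ell,\sigma'}$, at the cost of a fraction of H\"older regularity. Gluing the limit charts via the limit transition maps produces a smooth pointed manifold $(M,p)$ with a $C^{\ell,\sigma}$ metric $g$ satisfying the same chart bounds, and the diffeomorphisms $F_k$ of Definition \ref{17} are assembled chart-by-chart from $\phi_{k,i}\circ\phi_{\infty,i}^{-1}$, patched by a partition of unity.

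The main obstacle, and the only genuine deviation from the closed case, will be the boundary bookkeeping: checking that boundary charts converge to boundary charts, that the transition between a nearby interior chart and boundary chart is itself uniformly $C^{\ell+1,\sigma}$, and that the resulting diffeomorphisms $F_k$ restrict to diffeomorphisms $\overline{\mathcal{V}}_k\cap\partial M\to\overline{\Omega}_k\cap\partial M_k$. These points are precisely the ones handled, in the non-pointed case, in Theorem \ref{20} (via \cite{Pet16}); once they are in place, the pointed statement follows by the usual exhaustion diagonal argument.
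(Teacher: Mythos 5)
Your outline is the standard Cheeger--Gromov diagonal argument adapted to the boundary case, which is exactly what the paper's ``proof'' consists of: a citation of \cite[Theorem 11.3.6]{Pet16} together with the observation (already made for Theorem~\ref{20}) that one allows charts of type 2 from Definition~\ref{17}. The only minor slip is the parenthetical ``after passing to a further subsequence'' attached to the uniform $C^{\ell+1,\sigma}$ bound on transition maps --- that bound is uniform over the whole class via the $Q$-quasi-isometry property and the Christoffel-symbol identity for $\partial^2(\phi_{k,i}\circ\phi_{k,j}^{-1})$, and a subsequence is only needed later for the Arzel\`a--Ascoli extraction of convergent transition maps and metric components.
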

	
We now give the proof of Theorem \ref{19}:
	
	\begin{proof}[Proof of Theorem \ref{19}]
		We suppose for a contradiction that there exists a sequence $\{(M_k, \widetilde{g}_k)\}$ $\subset\mathcal{M}^n_\sigma(R_0, i_0, S_0,\infty)$ such that $r_{\mathrm{har}}^{1+\sigma}(M_k, \widetilde{g}_k, Q)\eqdef \ep_k\rightarrow 0$ as $k\rightarrow\infty$. We then define the rescaled metrics $g_k = \ep_k^{-2}\widetilde{g}_k$, which satisfy
		\begin{align}\label{402}
		r_{\mathrm{har}}^{1+\sigma}(M_k, g_k, Q) = 1 \quad \text{for all }k, 
		\end{align}
		\begin{align}\label{27}
		\|\operatorname{Ric}_{(M_k, g_k)}\|_{L^\infty(M_k)}, \, \|\operatorname{Ric}_{(\partial M_k, g_k)}\|_{L^\infty(\partial M_k)}, \|H_{(\partial M_k, g_k)}\|_{C^{0,\sigma}(\partial M_k, g_k)} \longrightarrow 0 \quad \text{as }k\rightarrow\infty
		\end{align}
		and
		\begin{align}\label{28}
		i_{(M_k, g_k)}, \, i_{(\partial M_k, g_k)}, \, i_{b,(\partial M_k, g_k)} \longrightarrow \infty \quad \text{as }k\rightarrow \infty. 
		\end{align}
		By \eqref{402}, for each $k$ there exists $p_k \in M_k$ such that
		\begin{align}\label{23}
		r_{\mathrm{har}}^{1+\sigma}(p_k, g_k, Q) < \frac{3}{2}. 
		\end{align}
		
		Now, by Theorem \ref{20''} there exists a subsequence (which we still index by $k$) and a pointed Riemannian manifold $(M, g, p)$ such that $(M_k, g_k, p_k) \rightarrow (M, g, p)$ in the pointed $C^{1,\sigma'}$-topology for each $\sigma'<\sigma$. Following \cite{AKKLT04, And90}, we then have two possibilities:
		\begin{enumerate}
			\item $\operatorname{dist}_{g_k}(p_k,\partial M_k)\rightarrow\infty$ as $k\rightarrow\infty$, in which case $(M,g)$ is isometric to $\mathbb{R}^n$ with the Euclidean metric, or
			\item $\operatorname{dist}_{g_k}(p_k,\partial M_k)\leq K < \infty$, in which case $(M,g)$ is isometric to $\overline{\mathbb{R}_+^n}$ with the Euclidean metric. 
		\end{enumerate}
	We refer to \cite[Lemma 3.2.2]{AKKLT04} for the proof of this dichotomy, which builds on an argument on pages 434-435 of Anderson \cite{And90}. We point out that the proof of this dichotomy in \cite{AKKLT04} only requires $\|H_{(\partial M_k,g_k)}\|_{L^\infty(\partial M_k)}\rightarrow 0$ as $k\rightarrow \infty$, and therefore applies in our setting. The rest of the proof of Theorem \ref{19} proceeds according to whether we are in Case 1 or 2, so we now consider these cases separately. \bigskip 
		
		\noindent\underline{Case 1.} Since $\operatorname{dist}_{g_k}(p_k,\partial M_k)\rightarrow\infty$ as $k\rightarrow\infty$, there exist neighbourhoods $U_k$ of $p_k$ in $M_k$ identified with $B_5\subset\mathbb{R}^n$ such that, under this identification, $p_k=0$ and $g_k\rightarrow\delta$ in $C^{1,\sigma'}(B_5)$. In what follows, the $C^{k,\alpha}$ norms of scalar functions are implicitly measured using the Euclidean distance. Let $\{x^\nu\}_{1\leq \nu \leq n}$ denote the Cartesian coordinates on $\mathbb{R}^n$, and for each $k$ and $1 \leq \nu \leq n$ define $u_k^\nu$ via
		\begin{align}\label{47}
		\begin{cases}
		\Delta_{g_k} u_k^\nu = 0 & \text{in }B_5 \\
		u_k^\nu = x^\nu & \text{on }\partial B_5. 
		\end{cases}
		\end{align}
		It follows from \eqref{47} that
		\begin{align*}
		\begin{cases}
		\Delta_{g_k}(u_k^\nu - x^\nu) = f_k^\nu & \text{in }B_5 \\
		u_k^\nu - x^\nu = 0 & \text{on }\partial B_5, 
		\end{cases}
		\end{align*}
		where
		\begin{align}\label{48}
		f_k^\nu \defeq -\Delta_{g_k} x^\nu = -\frac{1}{\sqrt{\operatorname{det}g_k}}\partial_i \Big(\sqrt{\operatorname{det}g_k}\,g_k^{ij}\partial_j x^\nu\Big) =  -\frac{1}{\sqrt{\operatorname{det}g_k}}\partial_i \Big(\sqrt{\operatorname{det}g_k}\,g_k^{i\nu}\Big).
		\end{align}
		Now, since $g_k\rightarrow \delta$ in $C^{1,\sigma'}(B_5)$, we see that $\|f_k^\nu\|_{C^{0,\sigma'}(B_5)}\rightarrow 0$ as $k\rightarrow \infty$. Moreover, since the leading order term in the Laplace-Beltrami operator $\Delta_{g_k}$ is $g_k^{ij}\partial_i\partial_j$, classical Schauder estimates for non-divergence form elliptic equations (see e.g.~\cite[Theorem 6.6]{GT}) imply
		\begin{align}\label{21}
		\|u_k^\nu - x^\nu\|_{C^{2,\sigma'}(B_5)} \rightarrow 0  \quad \text{as }k\rightarrow\infty. 
		\end{align}
		It follows from \eqref{21} that for sufficiently large $k$, $u_k$ is a local diffeomorphism from $B_5$ onto its image. Moreover, 
		\begin{align}\label{211}
		|u_k(y) - u_k(z)| & \geq |y-z| - |(u_k-x)(y) - (u_k-x)(z)| \nonumber \\
		&  \geq |y-z| - \|u_k - x\|_{C^{0,1}(B_5)}|y-z| \nonumber \\
		& \geq \frac{1}{2}|y-z|
		\end{align}
		for sufficiently large $k$ by \eqref{21}, so $u_k$ is injective and hence a diffeomorphism onto its image for large $k$. In particular, $\{u_k^\nu\}_{1\leq \nu \leq n}$ define harmonic coordinates on $B_5$. To assert a lower bound on the harmonic radius later in the proof, we also need to control the size of the image of $u_k$. We claim that $u_k(B_5) = B_5$ (and hence, by the boundary data in \eqref{47}, $u_k(\overline{B}_5) = \overline{B}_5$) for large $k$. To see this, first observe by \eqref{47} that
		\begin{align}\label{212}
		\begin{cases}
		\Delta_{g_k}|u_k|^2 = 2\sum_\nu (u_k^\nu \Delta_{g_k}u_k^\nu + |du_k^\nu|^2) \geq 0 & \text{in }B_5 \\
		|u_k|^2 = |x|^2 = 25 & \text{on }\partial B_5.
		\end{cases}
		\end{align}
		The strong maximum principle therefore implies $u_k(B_5) \subseteq B_5$. On the other hand, by the boundary data in \eqref{47}, $u_k(\partial B_5) = \partial B_5$. Therefore $u_k(B_5)$ is both open and closed in $B_5$, and since it is non-empty, it is therefore equal to $B_5$ as claimed. In particular, in combination with \eqref{21} we also see that 
		\begin{align}\label{411}
		u_k^{-1} \text{ converges to the identity map on }\overline{B}_5 \text{ as }k\rightarrow\infty.
		\end{align}

		 Denoting by $g_{(k)}^{ij} = g_k^{-1}(du_k^i, du_k^j)$ the components of the inverse metric $g_k^{-1}$ with respect to the harmonic coordinates, it follows from \eqref{21} that 
		\begin{align*}
		\|g^{ij}_{(k)} - \delta^{ij}\|_{C^{1,\sigma'}(B_5)}\rightarrow 0 \quad \text{as }k\rightarrow\infty,
		\end{align*}
		and hence the components $g_{ij}^{(k)}$ of the metric $g_k$ with respect to the harmonic coordinates satisfy
		\begin{align}\label{22}
		\|g_{ij}^{(k)} - \delta_{ij}\|_{C^{1,\sigma'}(B_5)}\rightarrow 0 \quad \text{as }k\rightarrow\infty.
		\end{align}
		We now wish to upgrade the convergence in \eqref{22} to $C^{1,\sigma}$ convergence on a smaller ball, namely we claim that
		\begin{align}\label{25}
		\|g_{ij}^{(k)} - \delta_{ij}\|_{C^{1,\sigma}(B_3)}\rightarrow 0 \quad \text{as }k\rightarrow\infty.
		\end{align}
		Once \eqref{25} is obtained, it follows from \eqref{411} that $r_{\mathrm{har}}^{1+\sigma}(p_k,g_k,Q) \geq 2$ for sufficiently large $k$, which contradicts \eqref{23} and thus completes Case 1 of the proof.\medskip 
		
		To prove \eqref{25}, we use the well-known fact (see e.g.~\cite{DK81}) that the components $g_{ij}^{(k)}$ of the metric tensor in harmonic coordinates $\{u_k^\nu\}_{1\leq \nu \leq n}$ satisfy the elliptic equation 
		\begin{align*}
		\Delta_{g_k}(g_{ij}^{(k)} - \delta_{ij}) = F_{ij}^{(k)} \defeq B_{ij}(g^{(k)}_{lm}, \nabla g_{lm}^{(k)}) - 2\operatorname{Ric}_{ij}^{(k)},
		\end{align*}
		where $B_{ij}$ is smooth in both arguments. Now, by \eqref{22} and the $L^\infty$ decay of the Ricci curvature in \eqref{27}, we know that $\|F_{ij}^{(k)}\|_{L^\infty(B_5)}\rightarrow 0$. It follows from interior $W^{2,p}$ elliptic regularity (see e.g.~\cite[Theorem 9.11]{GT}) that $\|g_{ij}^{(k)}-\delta_{ij}\|_{W^{2,p}(B_4)}\rightarrow 0$ for any $p<\infty$, and hence by the Morrey embedding theorem $\|g_{ij}^{(k)}-\delta_{ij}\|_{C^{1,s}(B_3)}\rightarrow 0$ for each $s<1$. In particular, \eqref{25} holds. \bigskip
		
		\noindent\underline{Case 2.} Let $q_k\in\partial M_k$ be the point in $\partial M_k$ closest to $p_k$, which is uniquely defined for sufficiently large $k$ due to \eqref{28}. By the pointed convergence of $(M_k,g_k,p_k)$ to $(M,g,p)$ (where we recall that $(M,g)$ isometric to $\overline{\mathbb{R}_+^n}$ with the Euclidean metric), after possibly passing to a further subsequence there exists $q\in \partial M = \partial\mathbb{R}^n_+$ such that $\operatorname{dist}_{g_k}(p_k,q_k)\rightarrow\operatorname{dist}_g(p,q)$. Fixing $L \geq \operatorname{dist}_g(p,q) + 2$, we therefore have neighbourhoods $U_k$ of $q_k$ in $M_k$ identified with $B_{L+5}^+ =  B_{L+5}\cap\{x_n\geq 0\}$ such that, under this identification, $q_k=0$ and $g_k\rightarrow\delta$ in $C^{1,\sigma'}(B_{L+5}^+)$. We let $\{x^\nu\}_{1 \leq \nu \leq n}$ denote the Cartesian coordinates on $\overline{\mathbb{R}_+^n}$, so that in particular $\partial\mathbb{R}_+^n = \{x_n=0\}$. \medskip

		We construct harmonic coordinates in $B_{L+3}^+$ as follows. For $r>0$, we decompose the boundary of $B^+_r$ as 
		\begin{align*}
		\partial B^+_r = \partial' B_r^+ \cup \partial'' B_r^+,
		\end{align*}
		where $\partial' B_r^+ = B_r^+ \cap \{x_n=0\}$ and $\partial '' B_r^+ = \partial B^+_r  \backslash \partial' B_r^+$ is a hemisphere. For each $k$ and $1 \leq \nu \leq n-1$, we first define harmonic coordinates $v_k^\nu$ on $\partial' B_{L+5}^+$ as in Case 1 via 
		\begin{align}\label{413}
		\begin{cases}
		\Delta_{\partial M_k} v_k^\nu = 0 & \text{in }\partial' B_{L+5}^+ \\
		v_k^\nu = x^\nu & \text{on }\partial (\partial' B_{L+5}^+).
		\end{cases}
		\end{align}
		Analogously to \eqref{21}, we have
		\begin{align}\label{26}
		\|v_k^\nu - x^\nu\|_{C^{2,\sigma'}(\partial' B_{L+5}^+)}\rightarrow 0 \quad \text{as }k\rightarrow \infty \text{ for }1 \leq \nu \leq n-1
		\end{align}
		and hence by the same reasoning as in Case 1, $\{v_k^\nu\}_{1\leq \nu \leq n-1}$ do indeed define harmonic coordinates on $\partial' B_{L+5}^+$. For each $k$ and $1 \leq \nu \leq n-1$, we now extend the $v_k^{\nu}$ to functions $u_k^\nu$ on $B_{L+5}^+$ via the boundary value problem:
		\begin{align}\label{412}
		\begin{cases}
		\Delta_{g_k} u_k^\nu = 0 & \text{in } \operatorname{int}(B_{L+5}^+) \\
		u_k^\nu = v_k^\nu & \text{on }\partial' B_{L+5}^+ \\
		u_k^\nu = x^\nu & \text{on }\partial'' B_{L+5}^+. 
		\end{cases}
		\end{align}
		It follows that
		\begin{align*}
		\begin{cases}
		\Delta_{g_k}(u_k^\nu - x^\nu) = f_k^\nu & \text{in }\operatorname{int}(B_{L+5}^+) \\
		u_k^\nu - x^\nu = \phi_k^\nu & \text{on }\partial B_{L+5}^+,
		\end{cases}
		\end{align*}
		where $f_k^\nu$ is defined in \eqref{48} and satisfies $\|f_k^\nu\|_{C^{0,\sigma'}}(B_{L+5}^+)\rightarrow 0$ as $k\rightarrow \infty$, and
		\begin{align*}
		\phi_k^\nu \defeq \begin{cases}
		v_k^\nu - x^\nu & \text{on }\partial' B_{L+5}^+ \\
		0 & \text{on }\partial'' B_{L+5}^+  . 
		\end{cases}
		\end{align*} 
		Using \eqref{26}, it follows that
		\begin{align*}
		\|\phi_k^\nu\|_{C^{2,\sigma'}(\partial' B_{L+5}^+)}\rightarrow 0 \quad \text{and} \quad \|\phi_k^\nu\|_{L^\infty(\partial B_{L+5}^+)}\rightarrow 0 \quad \text{as }k\rightarrow\infty \text{ for }1 \leq \nu \leq n-1. 
		\end{align*}
		Elliptic regularity on the Lipschitz domain $B_{L+5}^+$ therefore implies 
		\begin{align*}
		\|u_k^\nu - x^\nu\|_{L^\infty(B_{L+5}^+)} \rightarrow 0 \quad \text{as }k\rightarrow\infty \text{ for }1 \leq \nu \leq n-1
		\end{align*}
		and 
		\begin{align}\label{29}
		\|u_k^\nu - x^\nu\|_{C^{2,\sigma'}(B_{L+4}^+)} \rightarrow 0 \quad \text{as }k\rightarrow\infty \text{ for }1 \leq \nu \leq n-1. 
		\end{align}

		To construct the final harmonic coordinate $u_k^n$, we simply solve 
		\begin{align*}
		\begin{cases}
		\Delta_{g_k} u_k^n = 0 & \text{in } \operatorname{int}(B_{L+5}^+) \\
		u_k^n = x^n & \text{on }\partial B_{L+5}^+,
		\end{cases}
		\end{align*}
		from which it follows using arguments as above that \eqref{29} also holds for $\nu=n$. Combining the above, we therefore obtain
		\begin{align}\label{30}
		\|u_k^\nu - x^\nu\|_{C^{2,\sigma'}(B_{L+4}^+)} \rightarrow 0 \quad \text{as }k\rightarrow\infty \text{ for }1 \leq \nu \leq n,
		\end{align}
		and hence by the same reasoning as in Case 1 (see \eqref{21}, \eqref{211}), $\{u_k^\nu\}_{1 \leq \nu\leq n}$ do indeed define harmonic coordinates on $B_{L+4}^+$, with $\{u_k^n=0\}$ corresponding to $\{x^n=0\}$ for large $k$. In particular, for such $k$, $u_k$ is a diffeomorphism from $B_{L+4}^+$ into its image. Once again, we must control the size of the image of $u_k$ in order to gain control on the harmonic radius later in the proof, and indeed we claim that\footnote{It follows immediately from \eqref{30} that $u_k^{-1}(B_{L+3}^+)\subset B_{L+4}^+$ for large $k$. The claim in \eqref{404} is that the whole of $B_{L+3}^+$ belongs to the image.}
		\begin{align}\label{404}
		B_{L+3}^+ \subset u_k(B_{L+4}^+)\quad \text{for sufficiently large }k.
		\end{align}
		Recall $u_k|_{\partial' B_{L+5}^+} = v_k$ by \eqref{412} and $v_k$ is defined via \eqref{413}, which mirrors exactly the set-up \eqref{47} in Case 1. Arguing as in Case 1 (see \eqref{211} and \eqref{212}), we see that $v_k(\partial' B_{L+5}^+)= \partial' B_{L+5}^+$ and $v_k^{-1}$ converges to the identity map on $\partial' B_{L+5}^+$ as $k\rightarrow \infty$. It follows that for sufficiently large $k$, $\partial' B_{L+3}^+ \subset u_k(\partial' B_{L+4}^+)$. To complete the proof of \eqref{404}, it therefore remains to show $\operatorname{int}(B_{L+3}^+)\subset u_k(\operatorname{int}(B_{L+4}^+))$, or equivalently 
		\begin{align}\label{406}
		\operatorname{int}(B_{L+3}^+) \cap u_k(\operatorname{int}(B_{L+4}^+)) = 	\operatorname{int}(B_{L+3}^+) \quad \text{for sufficiently large }k.
		\end{align}
		By \eqref{30} and the fact $u_k(\operatorname{int}(B_{L+4}^+))\subset\{x_n>0\}$, the intersection on the LHS of \eqref{406} is clearly non-empty for sufficiently large $k$, and so to prove \eqref{406} it suffices to show that this intersection is both open and closed in $\operatorname{int}(B_{L+3}^+)$ for large $k$. In what follows, we fix $k$ sufficiently large so that $u_k$ is a diffeomorphism from $B_{L+4}^+$ onto its image and $u_k(\partial'' B_{L+4}^+) \subset \mathbb{R}^n\backslash \overline{B_{L+3}^+}$. Openness is clear from the fact that $u_k$ is a diffeomorphism. To see that it is closed in $\operatorname{int}(B_{L+3}^+)$, take a sequence $\{y_i\}\subset \operatorname{int}(B_{L+3}^+) \cap u_k(\operatorname{int}(B_{L+4}^+))$ converging to $y\in\operatorname{int}(B_{L+3}^+)$. We wish to show $y\in \operatorname{int}(B_{L+3}^+) \cap u_k(\operatorname{int}(B_{L+4}^+))$, for which we must establish the existence of $z\in \operatorname{int}(B_{L+4}^+)$ such that $y = u_k(z)$. Let $z_i \in \operatorname{int}(B_{L+4}^+)$ be such that $y_i = u_k(z_i)$ for each $i$. Then, after possibly passing to a subsequence, we have $z_i\rightarrow z\in \overline{\operatorname{int}(B_{L+4}^+)} = \overline{B_{L+4}^+}$ and $y = u_k(z)$. Since $u_k(z) = y\in \operatorname{int}(B_{L+3}^+)$ and since we have chosen $k$ large enough so that $u_k(\partial'' B_{L+4}^+) \subset \mathbb{R}^n\backslash \overline{B_{L+3}^+}$, we must have $z\not\in \partial'' B_{L+4}^+$. Likewise, since $u_k(\partial' B_{L+4}^+)\subset \{x_n=0\}$ and $u_k(z) = y \not\in \{x_n=0\}$, we must also have $z\not\in \partial' B_{L+4}^+$. Therefore $z\in \operatorname{int}(B_{L+4}^+)$, as required.\medskip

		By combining \eqref{404} with \eqref{30}, we therefore have the following assertion:
		\begin{align}\label{407}
		u_k^{-1}|_{B_{L+3}^+} \text{ converges to the natural injection of }B_{L+3}^+ \text{ in }B_{L+4}^+ \text{ as }k\rightarrow\infty. 
		\end{align}

		Denoting by $g^{ij}_{(k)} = g_k^{-1}(d u_k^i, du_k^j)$ the components of the inverse metric $g_k^{-1}$ with respect to the harmonic coordinates, it follows from \eqref{30} that
		\begin{align}\label{414}
		\|g^{ij}_{(k)} - \delta^{ij}\|_{C^{1,\sigma'}(B_{L+4}^+)}\rightarrow 0 \quad \text{as }k\rightarrow\infty
		\end{align}
		and hence the components $g_{ij}^{(k)}$ of the metric $g_k$ with respect to the harmonic coordinates satisfy 
		\begin{align}\label{31}
		\|g_{ij}^{(k)} - \delta_{ij}\|_{C^{1,\sigma'}(B_{L+4}^+)}\rightarrow 0 \quad \text{as }k\rightarrow\infty
		\end{align}
		As in Case 1, we wish to upgrade the convergence in \eqref{31} to $C^{1,\sigma}$ convergence on a smaller half-ball, namely we claim that 
		\begin{align}\label{32}
		\|g_{ij}^{(k)} - \delta_{ij}\|_{C^{1,\sigma}(B_{L+1}^+)}\rightarrow 0 \quad \text{as }k\rightarrow\infty. 
		\end{align}
		Once \eqref{32} is obtained, it follows from \eqref{407} and the fact $L \geq \operatorname{dist}_g(p,q) + 2$ that $r_{\mathrm{har}}^{1+\sigma}(p_k,g_k,Q) \geq 2$ for sufficiently large $k$, which contradicts \eqref{23} and thus completes Case 2 of the proof. \medskip
		
		We split the proof of \eqref{32} into two steps, following a slightly different presentation to that given in \cite{AKKLT04}.\medskip 
		
	\noindent\textbf{Step 1:} 
	\begin{align}\label{39}
	\|g_{(k)}^{ln} - \delta^{ln}\|_{C^{1,\sigma}(B_{L+2}^+)} \rightarrow 0  \quad \text{as }k\rightarrow \infty \text{ for }1 \leq l \leq n. 
	\end{align}
	
	\noindent\textbf{Step 2:}
	\begin{align}\label{41'}
	\|g^{ij}_{(k)} - \delta^{ij}\|_{C^{1,\sigma}(B_{L+1}^+)} \rightarrow 0 \quad \text{as }k\rightarrow\infty \text{ for }1 \leq i,j \leq n-1.
	\end{align}
	
	Once we have carried out Steps 1 and 2, we will have shown 
	\begin{align*}
	\|g^{ij}_{(k)} - \delta^{ij}\|_{C^{1,\sigma}(B_{L+1}^+)}\rightarrow 0 \quad \text{as }k\rightarrow\infty \text{ for }1 \leq i, j \leq n,
	\end{align*}
	from which \eqref{32} follows easily.\medskip

	\noindent\textit{Proof of Step 1.} We start by recalling the well-known fact (see e.g.~\cite{DK81}) that the components $g_{(k)}^{lm}$ of the inverse metric tensor in harmonic coordinates $\{u_k^\nu\}_{1 \leq \nu \leq n}$ satisfy the elliptic equation 
		\begin{align}\label{408}
		\Delta_{g_k}(g_{(k)}^{lm} - \delta^{lm}) = F_{(k)}^{lm} \defeq B^{lm}(g_{(k)}^{ab},\nabla g_{(k)}^{ab}) + 2\operatorname{Ric}_{(k)}^{lm} \quad \text{in }B_{L+4}^+,
		\end{align}
		where $B^{lm}$ is smooth in both entries. Now, by \eqref{414} and the $L^\infty$ decay of the Ricci curvature in \eqref{27}, we have
		\begin{align}\label{35}
		\|F_{(k)}^{ln}\|_{L^\infty(B_{L+4}^+)}\rightarrow 0 \quad \text{as }k\rightarrow\infty. 
		\end{align} 
		On the other hand, as derived in the proof of \cite[Lemma 2.1.2]{AKKLT04}, $g_{(k)}^{ln} - \delta^{ln}$ satisfies the following Neumann boundary conditions on $\partial' B_{L+4}^+$:
		\begin{align}\label{36}
		\nabla_N(g_{(k)}^{ln} - \delta^{ln}) = \begin{cases}
		-2(n-1) H_k g_{(k)}^{nn} & \text{if } l=n \\
		-(n-1)H_k g_{(k)}^{ln} + \frac{1}{2\sqrt{g_{(k)}^{nn}}}g_{(k)}^{li}\partial_i g_{(k)}^{nn} & \text{if }1 \leq l \leq n-1,
		\end{cases}
		\end{align}
		where $N = \frac{\nabla u_k^n}{|\nabla u_k^n|}$ is the upward pointing unit normal field on $\partial' B_{L+4}^+$. By \eqref{402} (which implies the metrics $g_{(k)}$ and their inverses are uniformly bounded in $C^{1,\sigma}$) and the $C^{0,\sigma}$ decay of the mean curvature in \eqref{27}, we have that $\|H_k g_{(k)}^{nn}\|_{C^{0,\sigma}(\partial' B_{L+4}^+)}\rightarrow 0$ as $k\rightarrow\infty$. In combination with \eqref{35}, elliptic regularity (see \cite[Theorem 5.4.1]{AKKLT04}) therefore yields 
		\begin{align}\label{37}
		\|g^{nn}_{(k)} - \delta^{nn}\|_{C^{1,\sigma}(B_{L+3}^+)}\rightarrow 0 \quad \text{as }k\rightarrow\infty. 
		\end{align}
		It then follows that the right hand side in the bottom line of \eqref{36} also converges to 0 in $C^{0,\sigma}(\partial' B_{L+3}^+)$ as $k\rightarrow\infty$, and hence elliptic regularity \cite[Theorem 5.4.1]{AKKLT04} again implies 
		\begin{align}\label{38}
		\|g^{ln}_{(k)} - \delta^{ln}\|_{C^{1,\sigma}(B_{L+2}^+)}\rightarrow 0 \quad \text{as }k\rightarrow\infty \text{ for }1 \leq l \leq n-1.
		\end{align}
		Combining \eqref{37} and \eqref{38}, we arrive at \eqref{39}, which completes Step 1.\medskip 
		
		\noindent\textit{Proof of Step 2.} We only need to show
		\begin{align}\label{410}
		\|g_{(k)}^{ij}-\delta^{ij}\|_{C^{1,\sigma}(\partial' B_{L+2}^+)}\rightarrow 0 \quad \text{as }k\rightarrow\infty \text{ for }1 \leq i,j \leq n-1. 
		\end{align} 
		Indeed, in light of \eqref{31} and the equation \eqref{408} satisfied by the inverse metric components $g^{ij}_{(k)}$ in $B_{L+2}^+$, once \eqref{410} is established the desired estimate \eqref{41'} again follows from elliptic regularity. \medskip 
		
		To prove \eqref{410}, we start by writing
		\begin{align*}
		\nabla u_k^i = \nabla_N  u_k^i + \nabla_T u_k^i, 
		\end{align*}
		where $N = \frac{\nabla u^n_k}{|\nabla u^n_k|} = \frac{\nabla u_k^n}{\sqrt{g^{nn}_{(k)}}}$ is the unit upward pointing normal, $\nabla_N u_k^i = g_k(N, \nabla u_k^i)N$ and $\nabla_T$ denotes the tangential gradient. Note that 
		\begin{align*}
		\nabla_N u_k^i = g_k(N,\nabla u_k^i)N = \frac{1}{\sqrt{g_{(k)}^{nn}}} g_k(\nabla u^n_k, \nabla u^i_k)N = \frac{g_{(k)}^{ni}}{\sqrt{g_{(k)}^{nn}}}N.
		\end{align*}
		Therefore
		\begin{align}\label{409}
		\|g_{(k)}^{ij}-&\delta^{ij}\|_{C^{1,\sigma}(\partial' B_{L+2}^+)}  = \|g_k(\nabla u_k^i,\nabla u_k^j) - \delta^{ij}\|_{C^{1,\sigma}(\partial' B_{L+2}^+)} \nonumber \\
		& \qquad \qquad  \leq \|g_k(\nabla_T u_k^i,\nabla_T u_k^j) - \delta^{ij}\|_{C^{1,\sigma}(\partial' B_{L+2}^+)} + \|g_k(\nabla_N u_k^i, \nabla_N u_k^j)\|_{C^{1,\sigma}(\partial' B_{L+2}^+)} \nonumber \\
		& \qquad \qquad = \|g_k(\nabla_T v_k^i,\nabla_T v_k^j) - \delta^{ij}\|_{C^{1,\sigma}(\partial' B_{L+2}^+)} + \bigg\|\frac{g_{(k)}^{ni}g_{(k)}^{nj}}{g_{(k)}^{nn}}\bigg\|_{C^{1,\sigma}(\partial' B_{L+2}^+)}.
		\end{align}
		Now, the $g_k(\nabla_T v_k^i,\nabla_T v_k^j)$ are the components of the induced metric on $\partial'B_{L+2}^+$ with respect to the harmonic coordinates, and hence the argument in Case 1 tells us that the first term on the last line in \eqref{409} tends to zero as $k\rightarrow\infty$. Finally, the second term on the last line of \eqref{409} tends to zero by Step 1. This proves \eqref{410}, which completes the proof of Step 2 and hence the proof of Theorem \ref{19}. 
	\end{proof}
\end{appendices}

\footnotesize
\bibliography{references}{}
\bibliographystyle{siam}
\end{document}